\newcommand{\cmark}{\ding{51}}%
\newcommand{\xmark}{\ding{55}}%
\declaretheorem[
name = Theorem,
]{theorem}
\declaretheorem[
name = Corollary,
sibling = theorem
]{corollary}
\declaretheorem[
name = Lemma,
sibling = theorem
]{lemma}
\declaretheorem[
name = Proposition,
sibling = theorem
]{proposition}
\declaretheorem[
name = Claim,
numbered = no
]{claim*}
\declaretheoremstyle[%
  spaceabove=-6pt,%
  spacebelow=6pt,%
  headfont=\normalfont\itshape,%
  postheadspace=1em,%
  qed=$\blacksquare$,%
  headpunct={.}
]{mystyle}
\declaretheorem[
name = Remark,
sibling = theorem,
style=definition
]{remark}
\declaretheorem[
name = Definition,
sibling = theorem,
style=definition
]{definition}
\declaretheorem[
name = Question,
sibling = theorem,
style=definition
]{question}
\newcommand{\HA}{\mathsf{HA}}
\newcommand{\IKP}{\mathsf{IKP}}
\newcommand{\CZF}{\mathsf{CZF}}
\newcommand{\CZFER}{\mathsf{CZF_{ER}}}
\newcommand{\IZF}{\mathsf{IZF}}
\newcommand{\IZFR}{\mathsf{IZF_R}}
\newcommand{\ZFC}{\mathsf{ZFC}}
\newcommand{\ECST}{\mathsf{ECST}}
\newcommand{\BCST}{\mathsf{BCST}}
\newcommand{\AC}{\mathsf{AC}}
\newcommand{\MP}{\mathsf{MP}}
\newcommand{\PowerSet}{\mathsf{PowerSet}}
\newcommand{\CPC}{\mathbf{CPC}}
\newcommand{\CQC}{\mathbf{CQC}}
\newcommand{\IPC}{\mathbf{IPC}}
\newcommand{\IQC}{\mathbf{IQC}}
\newcommand{\Prop}{\mathsf{Prop}}
\newcommand{\sent}{\mathsf{sent}}
\newcommand{\Logic}{\mathbf{L}}
\newcommand{\QLogic}{\mathbf{QL}}
\DeclareMathOperator{\dom}{dom}
\newcommand{\seq}[1]{\langle #1 \rangle}
\newcommand{\Seq}[2]{\langle #1 \, | \, #2 \rangle}
\newcommand{\set}[1]{\{ #1 \}}
\newcommand{\Set}[2]{\{ #1 \, | \, #2 \}}
\renewcommand{\phi}{\varphi}
\renewcommand{\epsilon}{\varepsilon}
\newcommand{\E}{\exists}
\newcommand\reallywidehat[1]{%
\savestack{\tmpbox}{\stretchto{%
  \scaleto{%
    \scalerel*[\widthof{\ensuremath{#1}}]{\kern-.6pt\bigwedge\kern-.6pt}%
    {\rule[-\textheight/2]{1ex}{\textheight}}
  }{\textheight}%
}{0.5ex}}%
\stackon[1pt]{#1}{\tmpbox}%
}
\newcommand{\vsim}{\mathrel{\scalebox{1}[1.75]{$\shortmid$}\mkern-3.1mu\raisebox{0.15ex}{$\sim$}}}
\newcommand{\vsimpr}{\vsim^{\raisebox{-.5ex}{\rm\scriptsize 1}}}
\renewcommand{\makehor}[4]
  {\ifthenelse{\equal{#1}{n}}{\hspace{#3}}{}
   \ifthenelse{\equal{#1}{s}}{\rule[-0.5#2]{#3}{#2}}{}
   \ifthenelse{\equal{#1}{d}}{\setlength{\lengthvar}{#2}
     \addtolength{\lengthvar}{0.5#4}
     \rule[-\lengthvar]{#3}{#2}
     \hspace{-#3}
     \rule[0.5#4]{#3}{#2}}{}
   \ifthenelse{\equal{#1}{t}}{\setlength{\lengthvar}{1.5#2}
     \addtolength{\lengthvar}{#4}
     \rule[-\lengthvar]{#3}{#2}
     \hspace{-#3}
     \rule[-0.5#2]{#3}{#2}
     \hspace{-#3}
     \setlength{\lengthvar}{0.5#2}
     \addtolength{\lengthvar}{#4}
     \rule[\lengthvar]{#3}{#2}}{}
   \ifthenelse{\equal{#1}{w}}{
     \setbox0=\hbox{$\sim$}%
     \raisebox{-.6ex}{\hspace*{-.05ex}\adjustbox{width=#3,height=\height}{\clipbox{0.75 0 0 0}{\usebox0}}}}{}
  }
\begin{document}

\title{Logics and Admissible Rules of Constructive Set Theories}
\author{Rosalie Iemhoff$^1$ and Robert Passmann$^2$}
\address{$^1$ Department of Philosophy and Religious Studies, Utrecht University, The Netherlands \\ $^2$ Institute for Logic, Language and Computation, University of Amsterdam, The Netherlands}

\subject{Logic}
\keywords{Keywords here}
\corres{Corresponding author here}

\begin{abstract}
    We survey the logical structure of constructive set theories and point towards directions for future research. Moreover, we analyse the consequences of being extensible for the logical structure of a given constructive set theory. We finally provide examples of a number of set theories that are extensible. 
\end{abstract}

\begin{fmtext}
\section{Introduction}

Constructive set theories are formal systems in which we can conduct intuitionistic or constructive mathematics. Many systems of constructive set theory have strong connections to type theory and are interesting because they allow us to analyse the computational content of mathematical statements.

When a constructive mathematical theory $T$ is defined, we usually pay special attention to ensure that not all instances of the law of excluded middle, $$p \vee \neg p,$$ are derivable in $T$. Even if an axiom looks constructive, it may happen that it entails this logical law, and if all instances of the law of excluded middle are derivable, then the axiom is constructively unacceptable. 

Consider, for a well-known example, the famous theorem by Diaconescu \cite{Diaconescu1975} and Goodman-Myhill \cite{GoodmanMyhill1978} that adding the axiom of choice to intuitionistic set theory $\IZF$ results in classical $\ZFC$ set theory. This result is often taken as evidence for rejecting the axiom of choice as constructively valid. However, showing that a certain instance of excluded middle is not constructively valid is not sufficient for ensuring that only principles of intuitionistic logic are satisfied. After all, there could be intermediate principles that are derivable in $T$. 
Dick de Jongh was the first to investigate this phenomenon: in his doctoral dissertation \cite{DeJongh1970}, he showed that the propositional principles valid in Heyting Arithmetic $\HA$ under all substitutions are exactly those of propositional \phantom{intuitionistic logic $\IPC$.}
\end{fmtext}

\maketitle

\noindent intuitionistic logic $\IPC$. This fact is now known as \textit{de Jongh's Theorem} and has sparked fruitful investigations into the logical structure of intuitionistic arithmetic. A generalisation of de Jongh's Theorem would be to consider not only the propositional principles of $\HA$ but also its \textit{propositional admissible rules} (see \Cref{Definition: admissibility relation}). Visser \cite{Visser1999} showed that the propositional admissible rules of $\HA$ are exactly those of $\IPC.$ A further generalisation would be to consider the two notions in the setting of predicate logic. 

In this article, we survey results about the logical structure of constructive set theories and also provide a few new results in the area as well as questions for future research.

The study of the logical properties of classical and nonclassical theories goes back at least several decades. Early results on the logics of theories date from the 1970s, and Rybakov's seminal work on admissibility \cite{Rybakov} marked the beginning of a period during which the admissible rules were investigated for many theories, see \cite{Iemhoff2015} for an overview. What is characteristic of that period is that the theories involved were mainly arithmetical theories, such as Heyting Arithmetic, or extensions of propositional logics, such as modal, intermediate and substructural logics. This naturally leads to the more general questions studied today, questions that ask whether the phenomena observed for arithmetic apply to other well-known constructive theories, and whether the phenomena observed for propositional logics also hold for predicate logics. Behind all this lie conceptual issues as well: What does it mean for a theory to have the same logic or rules as a given theory? Should it be required of a genuine constructive theory that it has the same admissible rules as intuitionistic propositional or predicate logics? These questions characterize the modern perspective on the area. 
We do not know the answers to all of them yet, but the results in this paper provide some answers for constructive set theories.

\paragraph{Overview} We will introduce what we understand as the logical structure of a theory in \Cref{Section: The logical structure of a theory}, and then survey the techniques and results obtained so far in \Cref{Section: techniques and results}. \Cref{Section: Logics and rules of set theories} will be dedicated to obtaining a few new results. We mention suggestions for future research throughout the paper but also list some more in \Cref{Section: questions}. Finally, we discuss the logics and admissible rules of the extreme case of classical theories in \Cref{Section: classical case}.

\section{Preliminaries: the logical structure of a theory}
\label{Section: The logical structure of a theory}

We denote intuitionistic propositional logic by $\IPC$ and intuitionistic first-order logic by $\IQC$. Classical propositional logic is denoted by $\CPC$ and classical first-order logic by $\CQC$. If a logic $J$ satisfies $\IPC \subseteq J \subseteq \CPC$ or $\IQC \subseteq J \subseteq \CQC$, then it is a (propositional or first-order) \textit{intermediate} logic.

In this paper, we work with first-order logics formulated in the language consisting of countably many predicate symbols $P_i$ for $i \in \mathbb{N}$ in every arity. The formulas of first-order logic are recursively defined in the usual way with the connectives and quantifiers $\neg$, $\wedge$, $\vee$, $\rightarrow$, $\forall$ and $\exists$. We also consider $\bot$ and $\top$ to be part of the logical vocabulary. Note that we take constants and functions to be expressed as predicates in the usual way. The language of propositional logic is obtained by restricting to nullary predicates (i.e. the propositional letters) and forgetting the quantifiers. Finally, we assume that all first-order languages share the same set of variables $\set{x_i}_{i \in \mathbb{N}}$. 

\begin{definition}
    Let $T$ be a theory. A map $\sigma$ is a \textit{$T$-assignment} if and only if it maps each predicate symbol $R$ to a $T$-formula $\sigma(R)$ such that the number of free variables of $\sigma(R)$ is not bigger than the arity of $R$.
\end{definition}

For example, given a binary relation symbol $R$, we could obtain a $T$-assignment by setting $\sigma(R) = \text{``} x_0 \in x_1 \text{''}$. We denote the formulas of first-order logic by $\mathcal{L}_{\mathsf{FOL}}^\mathsf{form}$ and those of a theory $T$ by $\mathcal{L}_{T}^\mathsf{form}$. We
can then obtain substitutions from assignments as follows. 

\begin{definition}
    Let $T$ be a theory. 
    A map $\sigma: \mathcal{L}_{\mathsf{FOL}}^\mathsf{form} \to \mathcal{L}_T^\mathsf{form}$ is a \textit{$T$-substitution} if it obtained from a $T$-assignment $\tau$ such that $\sigma(R(x_{i_1},\dots,x_{i_n})) = \tau(R)(x_{i_1},\dots,x_{i_n})$, and, moreover, commutes with $\bot$, $\top$, and the logical  connectives. If the range of $\sigma$ is $\set{\top,\bot}$, then it is a \textit{ground substitution}.
\end{definition}

The theory $T$ is often clear from the context and we will then speak of \textit{substitutions} instead of \textit{$T$-substitutions}. As usual, a rule $\Gamma / \Delta$ is a pair of finite sets of logical formulas. 

\begin{definition}
    \label{Definition: admissibility relation}
    Let $T$ be a theory. A rule $\Gamma / \Delta$ is \textit{admissible in $T$, $\Gamma \vsimpr_T \Delta$,} if and only if for every substitution $\sigma$, if $T \vdash \sigma(A)$ for all $A \in \Gamma$, then there is $B \in \Delta$ such that $T \vdash \sigma(B)$.
\end{definition}

By $\vsim$ we will denote the admissibility relation in which we consider only propositional formulas. We will consider more closely now the propositional and first-order tautologies of a theory.

\begin{definition}
    A formula $A$ of first-order logic is a \textit{tautology of $T$} if and only if $\top \vsimpr_T A$.
\end{definition}

\begin{proposition}
    \label{Proposition: tautology characterisation}
    A formula $A$ is a tautology of $T$ if and only if $T \vdash \sigma(A)$ for all substitutions $\sigma$.
\end{proposition}
\begin{proof}
    Note that $\top \vsimpr_T A$ is by definition equivalent to $T \vdash \sigma(\top)$ implying $T \vdash \sigma(A)$ for all $\sigma$. As the premise is true for any $T$, the implication is equivalent to $T \vdash \sigma(A)$ for every $\sigma$.
\end{proof}

Given this definition, we can define the propositional and first-order logics of a theory.

\begin{definition}
    Let $T$ be a theory. The propositional logic of $T$, $\Logic(T)$, consists of all propositional tautologies of $T$. The first-order logic of $T$, $\QLogic(T)$, consists of all first-order tautologies of $T$.
\end{definition}

We can now ask four natural questions about the logical structure of a given theory $T$:
\begin{enumerate}
    \item What are the first-order admissible rules of $T$? ${\vsimpr_T} = {\textbf{?}}$
    \item What are the propositional admissible rules of $T$? ${\vsim_T} = {\textbf{?}}$
    \item What is the first-order logic of $T$? 
    ${\QLogic(T)} = {\textbf{?}}$
    \item What is the propositional logic of $T$? ${\Logic(T)} = {\textbf{?}}$
\end{enumerate}

Given the definitions above, it is clear that answers to some of these questions imply answers to others: If we know an answer to (i), then we know answers to (ii)-(iv). If we know an answer to (ii), then to (iv) as well. Finally, if we know an answer to (iii), then also to (iv). Still, there are several reasons to consider these questions separately. First, the following theorem shows that an informative answer---in the sense of a recursive description---to (i) is in general not possible.

\begin{theorem}[Visser {\cite{Visser1999}}]
    \label{Theorem: predicate rules pi2}
    Let $T$ be a theory such that $I\Delta_0 + \mathrm{Exp}$ can be relatively interpreted in $T$. If $T$ has the disjunction property, then $\vsimpr_T$ is $\Pi^0_2$-complete.
\end{theorem}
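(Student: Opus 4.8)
The plan is to establish the two halves of $\Pi^0_2$-completeness separately: membership of $\vsimpr_T$ in $\Pi^0_2$, and $\Pi^0_2$-hardness. Throughout I assume, as is implicit in the statement, that $T$ is recursively axiomatised, so that its provability predicate $\mathrm{Prov}_T$ is $\Sigma^0_1$; write $\mathrm{Prov}_T(\phi) \equiv \exists p\, \mathrm{Prf}_T(p,\phi)$ with $\mathrm{Prf}_T$ primitive recursive.

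For membership I would simply unfold \Cref{Definition: admissibility relation} and count quantifiers. Coding each substitution $\sigma$ by a natural number (a substitution is determined by its values on the finitely many predicate symbols occurring in $\Gamma\cup\Delta$, and $(\sigma,A)\mapsto\sigma(A)$ is primitive recursive), the relation $\Gamma \vsimpr_T \Delta$ reads
\[
\forall \sigma\Big[\,\textstyle\bigwedge_{A\in\Gamma}\mathrm{Prov}_T(\sigma A)\ \to\ \bigvee_{B\in\Delta}\mathrm{Prov}_T(\sigma B)\,\Big].
\]
The antecedent and consequent are finite Boolean combinations of $\Sigma^0_1$ predicates, hence $\Sigma^0_1$; an implication between $\Sigma^0_1$ predicates is $\Pi^0_2$, since $(\exists p\, U)\to(\exists q\, V)$ is equivalent, using decidability of $U$, to $\forall p\,\exists q\,(U(p)\to V(q))$. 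Contracting the leading $\forall\sigma$ with this universal block by a pairing function leaves a $\Pi^0_2$ matrix, which gives membership.

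The substance of the theorem is $\Pi^0_2$-hardness, and here the two hypotheses enter. I would reduce a standard $\Pi^0_2$-complete set, say the totality set $\mathrm{Tot}=\{e : \forall x\,\exists s\,T_{\mathrm{Kl}}(e,x,s)\}$ with Kleene's decidable predicate $T_{\mathrm{Kl}}$, to $\vsimpr_T$. The interpretability of $I\Delta_0+\mathrm{Exp}$ supplies two tools used repeatedly: the relevant $\Sigma^0_1$ formulas about computations are available inside $T$ (after the interpretation), and $T$ is $\Sigma^0_1$-complete, so for $\Sigma^0_1$ sentences provability in $T$ coincides with truth. The plan is to construct, computably in $e$, a predicate rule $\rho_e=\Gamma_e/\Delta_e$ with $\rho_e$ admissible in $T$ if and only if $e\in\mathrm{Tot}$. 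The design is arranged so that, as $\sigma$ ranges over all substitutions, the critical instances correspond to the possible inputs $x$: for the instance coding input $x$, the premises become $T$-provable exactly when the computation data for $(e,x)$ is coherent, while the disjunctive conclusion $\Delta_e$ becomes $T$-provable, via the disjunction property together with $\Sigma^0_1$-completeness, exactly when a halting witness $s$ exists. The universal quantifier over substitutions then realises the ``$\forall x$'' and provability of the $\Sigma^0_1$ conclusion realises the ``$\exists s$'', so that admissibility of $\rho_e$ matches totality of $e$.

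The main obstacle is precisely this construction: engineering $\rho_e$ so that (a) the only substitutions capable of falsifying admissibility are the intended ones coding inputs, with all others rendered harmless, and (b) the disjunction property---which is all that is assumed, rather than the stronger numerical existence property---genuinely suffices to force provability of a single disjunct and hence to read off a witness. Controlling spurious substitutions and making the disjunction property do the work of witness extraction is the delicate part; this is exactly the mechanism isolated by Visser \cite{Visser1999}, and I would follow that template, checking that it relativises through the interpretation of $I\Delta_0+\mathrm{Exp}$ rather than relying on the full strength of Heyting arithmetic.
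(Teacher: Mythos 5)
Your overall architecture matches the paper's: the paper does not reprove the result either, but cites Visser for the core construction, and your membership-in-$\Pi^0_2$ computation (which the paper omits entirely) is correct, granted the implicit assumption that $T$ is recursively axiomatised. The hardness half, which you rightly identify as the substance, is deferred to Visser's template both in your proposal and in the paper, so no fault lies there.

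However, there is one concrete adaptation that constitutes the entire content of the paper's proof, and your proposal misplaces it. You propose to check that Visser's argument ``relativises through the interpretation of $I\Delta_0+\mathrm{Exp}$'', i.e.\ you locate the delicate point on the arithmetical side. The actual mismatch is on the logical side: Visser formulates predicate admissibility via relative interpretations, where interpreting a formula comes with a domain predicate that relativises the quantifiers, whereas in this paper admissibility $\vsimpr_T$ (\Cref{Definition: admissibility relation}) is defined via $T$-substitutions, which act on atomic formulas only and leave quantifiers untouched --- no substitution can shrink the range of $\forall x$ or $\exists x$. Since Visser's hardness construction exploits domain relativisation, his rules do not directly witness $\Pi^0_2$-hardness of $\vsimpr_T$ as defined here; in particular your clause (a), taming the unintended substitutions, cannot be engineered as stated in this substitution-based framework. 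The repair, which is exactly what the paper's proof records, is to build the relativisation into the logical formulas of the rules themselves: introduce a fresh relation symbol $R$ and replace $\exists x\,A(x)$ and $\forall x\,A(x)$ throughout by $\exists x\,(R(x)\wedge A(x))$ and $\forall x\,(R(x)\rightarrow A(x))$; substituting a formula for $R$ then simulates the domain of a relative interpretation, and Visser's reduction transfers verbatim. Adding this bridging step (and noting explicitly that recursive axiomatisability is needed for your membership computation) would bring your proposal in line with the intended proof.
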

\begin{proof}
    This is Visser's result; however, as we are not working with relative interpretations here, one needs to add this explicitly on the logical side by replacing the quantifiers $\exists x A(x)$ and $\forall x A(x)$ with $\exists x (R(X) \wedge A(x))$ and $\forall x (R(x) \rightarrow A(x))$, respectively, where $R$ is a new relation symbol.
\end{proof}

However, one may of course still wonder whether the first-order admissible rules of any two theories are the same (see \Cref{Question: T T' vsimpr}) or study the admissibility of specific interesting rules (see, for example, van den Berg and Moerdijk \cite{vandenBergMoerdijk2012}). Another reason to consider these questions separately is that, of course, the more complex objects require more complicated proofs than the simpler ones. In a sense, question (iv) can often be seen as a \textit{litmus test} for the more difficult questions (ii) and (iii): if we conjecture that $\QLogic(T) = \IQC$, then showing that $\Logic(T) = \IPC$ is a step in the right direction (though there are counter examples, e.g., $\Logic(\IZF) = \IPC$ but $\QLogic(\IZF) \neq \IQC$, see \Cref{Section: techniques and results}). It may even be necessary to know an answer to (iv) before applying certain techniques for obtaining an answer to (ii), viz. the propositional admissible rules of $T$. We will see much more detail in the remainder of this paper. For now, we will dive a bit deeper into the logics and propositional admissible rules of a given theory.

We say that a theory $T$ is \textit{based on a logic $J$} if $T$ is axiomatised over $J$. As the results discussed in this paper show, if a theory $T$ is based on, for example, $\IQC$, we do not necessarily have that $\QLogic(T) = \IQC$. Crucially, we cannot only study the logics of a theory $T$ based on intuitionistic logic but, given an intermediate logic $J$, we can consider the theory $T(J)$ obtained as the closure of $T$ under $J$. De Jongh, Verbrugge and Visser \cite{deJonghVerbruggeVisser2011} studied these theories and defined the \textit{de Jongh property} as follows.

\begin{definition}
    Let $J$ be a propositional or first-order intermediate logic. A theory $T$ has the \textit{de Jongh property for $J$} just in case $\Logic(T(J)) = J$ if $J$ is propositional or $\QLogic(T(J)) = J$ if $J$ is first-order.  
\end{definition}

We also say that a theory $T$ satisfies \textit{de Jongh's theorem} if $\Logic(T) = \IPC$, and that $T$ satisfies \textit{de Jongh's theorem for first order logic} if $\QLogic(T) = \IQC$. To illustrate research in this area, we now give an incomplete history of de Jongh's theorem for arithmetical theories (see de Jongh, Verbrugge and Visser \cite{deJonghVerbruggeVisser2011} for a more complete history). De Jongh \cite{DeJongh1970} proved that $\Logic(\HA) = \IPC$---in other words, he proved that $\HA$ satisfies the de Jongh property for $\IPC$. Leivant \cite{Leivant1979} showed using proof-theoretic means that $\QLogic(\HA) = \IQC$; van Oosten later gave a model-theoretic proof of the same fact. De Jongh, Verbrugge and Visser \cite{deJonghVerbruggeVisser2011} proved that $\HA$ has the de Jongh property for several classes of intermediate logics. The following propositions are immediate from \Cref{Proposition: tautology characterisation}. 

\begin{proposition}
    Let $T \subseteq S$ be theories in the same language. If $A$ is a tautology of $T$, then it is a tautology of $S$. In particular, $\Logic(T) \subseteq \Logic(S)$ and $\QLogic(T) \subseteq \QLogic(S)$. \qed
\end{proposition}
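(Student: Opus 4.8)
The plan is to pass through the syntactic reformulation supplied by \Cref{Proposition: tautology characterisation} and then invoke monotonicity of the provability relation. Concretely, suppose $A$ is a tautology of $T$. By \Cref{Proposition: tautology characterisation} this means precisely that $T \vdash \sigma(A)$ holds for every $T$-substitution $\sigma$. My aim is to establish the analogous statement for $S$, namely that $S \vdash \sigma(A)$ for every $S$-substitution $\sigma$, which again by \Cref{Proposition: tautology characterisation} is equivalent to $A$ being a tautology of $S$.

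The key observation is that the hypothesis that $T$ and $S$ are theories in the same language makes the notions of $T$-substitution and $S$-substitution literally coincide: a substitution is determined by an assignment sending each predicate symbol to a formula of the underlying language, and since $\mathcal{L}_{T}^\mathsf{form} = \mathcal{L}_{S}^\mathsf{form}$, the same maps $\sigma$ serve as both $T$- and $S$-substitutions. Thus the universal quantifier ``for every substitution $\sigma$'' ranges over the same collection in both cases.

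With this identification in hand, the conclusion follows from $T \subseteq S$: for each such $\sigma$ we have $T \vdash \sigma(A)$ by assumption, and since every theorem of $T$ is a theorem of $S$, also $S \vdash \sigma(A)$. As $\sigma$ was arbitrary, $A$ is a tautology of $S$. The ``in particular'' clause is then immediate by unwinding the definitions of $\Logic$ and $\QLogic$ as the sets of propositional, respectively first-order, tautologies of the theory: the inclusion of tautologies gives $\Logic(T) \subseteq \Logic(S)$ and $\QLogic(T) \subseteq \QLogic(S)$.

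I do not expect any genuine obstacle here; the single point requiring care is the coincidence of the two substitution classes, and this is exactly the role played by the shared-language hypothesis. Were $T$ and $S$ formulated in different languages, one would instead have to argue that every $T$-substitution can be reinterpreted as an $S$-substitution, which is the only place where the argument could conceivably fail.
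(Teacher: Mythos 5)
Your proof is correct and matches the paper's intent: the proposition is stated there without proof precisely because it is immediate from \Cref{Proposition: tautology characterisation} together with monotonicity of provability, which is exactly the route you take. Your explicit remark that the shared-language hypothesis makes the classes of $T$- and $S$-substitutions coincide is the right point of care and is the only content the paper leaves tacit.
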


\begin{proposition}
    Let $J$ be a propositional or first-order intermediate logic. If $T \subseteq S$ are theories in the same language and $S$ satisfies the \textit{de Jongh property} for $J$, then so does $T$. In particular, if $S$ satisfies de Jongh's theorem, then so does $T$. \qed
\end{proposition}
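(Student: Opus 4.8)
The plan is to reduce the claim to the monotonicity of $\Logic$ and $\QLogic$ from the preceding proposition, together with the fact that the closure operation $T \mapsto T(J)$ is itself monotone. I will write out the propositional case; the first-order case is verbatim the same with $\QLogic$ in place of $\Logic$.

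First I would observe that $T \subseteq S$ implies $T(J) \subseteq S(J)$, since any formula derivable from $T$ over the logic $J$ is a fortiori derivable from the larger theory $S$ over $J$, and these closures live in the same language as $T$ and $S$. The preceding monotonicity proposition then applies to the inclusion $T(J) \subseteq S(J)$ and gives $\Logic(T(J)) \subseteq \Logic(S(J))$. As $S$ has the de Jongh property for $J$, the right-hand side is exactly $J$, whence $\Logic(T(J)) \subseteq J$.

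For the converse inclusion I would use that $J \subseteq \Logic(T(J))$ holds for every theory simply by construction of the closure: if $A \in J$, then since $J$-derivability is closed under substitution and $T(J)$ reasons over $J$, we obtain $T(J) \vdash \sigma(A)$ for every substitution $\sigma$, so that $A$ is a tautology of $T(J)$ by \Cref{Proposition: tautology characterisation}. Combining the two inclusions yields $\Logic(T(J)) = J$, i.e.\ $T$ has the de Jongh property for $J$.

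The ``in particular'' clause is the instance $J = \IPC$ (respectively $J = \IQC$ in the first-order case): for theories based on intuitionistic logic we have $T(\IPC) = T$ and $S(\IPC) = S$, so the de Jongh property for $\IPC$ is precisely de Jongh's theorem; alternatively one applies monotonicity directly to $T \subseteq S$ to get $\Logic(T) \subseteq \Logic(S) = \IPC$ and combines it with the standing inclusion $\IPC \subseteq \Logic(T)$. I do not expect any genuine obstacle here; the only points deserving a word of justification are the monotonicity of the closure $T \mapsto T(J)$ and the trivial inclusion $J \subseteq \Logic(T(J))$, after which the result is an immediate appeal to the previous proposition.
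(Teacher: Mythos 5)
Your proposal is correct and is exactly the argument the paper leaves implicit when it declares the proposition ``immediate'' from \Cref{Proposition: tautology characterisation}: monotonicity of the closure $T \mapsto T(J)$, monotonicity of $\Logic$ (resp.\ $\QLogic$) from the preceding proposition, and the trivial inclusion $J \subseteq \Logic(T(J))$. Nothing further is needed; your handling of the ``in particular'' clause via $T(\IPC) = T$ (or directly via $\Logic(T) \subseteq \Logic(S) = \IPC$) is likewise the intended reading.
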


A first crucial observation on the admissible rules of a theory is that these are bounded by its logic.

\begin{theorem}[Visser {\cite{Visser1999}}]
    \label{Theorem: Bound on admissible rules}
    Let $T$ be a theory. If $A \vsimpr_T B$, then $A \vsimpr_{\QLogic(T)} B$; i.e. ${\vsimpr_T} \subseteq {\vsimpr_{\QLogic(T)}}$. Similarly, if $A \vsim_T B$, then $A \vsim_{\Logic(T)} B$; i.e. ${\vsim_T} \subseteq {\vsim_{\Logic(T)}}$.
\end{theorem}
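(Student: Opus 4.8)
The plan is to exploit the fact that substitutions compose, so that a premise about $T$-derivability of substitution instances can be transferred to membership in the logic $\QLogic(T)$. I treat the first-order case; the propositional case is identical after restricting to propositional formulas and propositional substitutions.

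First, suppose $A \vsimpr_T B$ and let $\sigma$ be any first-order substitution with $\sigma(A) \in \QLogic(T)$, i.e. $\sigma(A)$ is a tautology of $T$. I want to show $\sigma(B) \in \QLogic(T)$. By \Cref{Proposition: tautology characterisation} it suffices to prove that $T \vdash \rho(\sigma(B))$ for every $T$-substitution $\rho$. Fix such a $\rho$. The key step is the observation that the composite $\rho \circ \sigma$ is again a $T$-substitution: since $\sigma$ sends each predicate symbol to a first-order formula and $\rho$ sends first-order formulas to $T$-formulas while commuting with $\bot$, $\top$ and the connectives, the composite is obtained from the $T$-assignment $R \mapsto \rho(\sigma(R(x_1,\dots,x_n)))$ and commutes with all logical operations; hence it satisfies the clauses defining a $T$-substitution, and $\rho(\sigma(C)) = (\rho \circ \sigma)(C)$ holds for every formula $C$.

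With this in place the transfer is short. Because $\sigma(A)$ is a tautology of $T$, \Cref{Proposition: tautology characterisation} gives $T \vdash \rho(\sigma(A)) = (\rho \circ \sigma)(A)$. Applying the admissibility $A \vsimpr_T B$ to the $T$-substitution $\rho \circ \sigma$, we obtain $T \vdash (\rho \circ \sigma)(B) = \rho(\sigma(B))$. As $\rho$ was arbitrary, $\sigma(B)$ is a tautology of $T$, so $\sigma(B) \in \QLogic(T)$. This establishes $A \vsimpr_{\QLogic(T)} B$, and the propositional statement follows verbatim with $\vsim$ in place of $\vsimpr$ and propositional substitutions throughout.

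The main obstacle is the verification that $\rho \circ \sigma$ genuinely meets the definition of a $T$-substitution. This is essentially bookkeeping about free variables: one must check that instantiating the argument variables $x_{i_1},\dots,x_{i_n}$ commutes with applying $\rho$, which relies on the usual capture-avoiding conventions (renaming bound variables of $\rho(\sigma(R(x_{i_1},\dots,x_{i_n})))$ where necessary) together with the constraint that the number of free variables of each assigned formula does not exceed the relevant arity. Once this closure of substitutions under composition is secured, the remainder of the argument is the elementary transfer above.
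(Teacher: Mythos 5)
Your proof is correct. The paper does not prove \Cref{Theorem: Bound on admissible rules} at all---it is quoted from Visser \cite{Visser1999}---and your argument is exactly the standard one: compose an arbitrary first-order substitution $\sigma$ with an arbitrary $T$-substitution $\rho$, use \Cref{Proposition: tautology characterisation} on both ends, and note that the only delicate point is that $\rho \circ \sigma$ is again a $T$-substitution (free-variable counts cannot grow, commutation with connectives and quantifiers is inherited, and instantiation of argument variables commutes with $\rho$ up to capture-avoiding renaming), which you correctly identify and discharge.
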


Not much is known about the converse direction. A counterexample could be obtained with \Cref{Theorem: predicate rules pi2} if it turns out that the predicate admissible rules of $\IQC$ are of complexity lower than $\Pi^0_2$-completeness. We close these preliminaries with two particularly helpful results for studying the propositional admissible rules of a given theory. A theory $T$ is called \textit{extensible} if any Kripke model of $T$ can be extended by adding a new root and corresponding domain to obtain a new model of $T$ (see also \Cref{Definition: extensibility}).

\begin{theorem}[Visser {\cite[Lemma 4.1]{Visser1999}}]
    \label{Theorem: Extensible admissibility}
    Let $T$ be a theory with $\Logic(T) = \IPC$. If $T$ is extensible, then the propositional admissible rules of $T$ are exactly those of $\IPC$, ${\vsim_T} = {\vsim_\IPC}$.
\end{theorem}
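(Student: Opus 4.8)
The equality ${\vsim_T}={\vsim_\IPC}$ splits into two inclusions, and the plan is to treat them by completely different means. The inclusion ${\vsim_T}\subseteq{\vsim_\IPC}$ is the easy half and comes for free from the hypothesis on the logic: \Cref{Theorem: Bound on admissible rules} gives ${\vsim_T}\subseteq{\vsim_{\Logic(T)}}$, and since $\Logic(T)=\IPC$ this is exactly ${\vsim_T}\subseteq{\vsim_\IPC}$. So the hypothesis $\Logic(T)=\IPC$ is doing its work entirely in this direction, and all the real effort goes into the converse ${\vsim_\IPC}\subseteq{\vsim_T}$, which is where extensibility will be used.

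For the converse I would not argue about an arbitrary $\IPC$-admissible rule directly, but reduce to a basis. By the results of Rybakov and Iemhoff the Visser rules
\[
\bigwedge_{i=1}^{n}(p_i \to q_i) \to (p_{n+1} \vee p_{n+2}) \quad \Big/ \quad \Big\{\, \bigwedge_{i=1}^{n}(p_i \to q_i) \to p_j : 1 \le j \le n+2 \,\Big\}
\]
form a basis for ${\vsim_\IPC}$, so, after checking that ${\vsim_T}$ is closed under the structural operations (substitution, weakening, cut) that generate admissible rules from a basis, it suffices to verify that $T$ admits every Visser rule. Fix $n$ and a $T$-substitution $\sigma$ with $T\vdash\sigma\big(\bigwedge_{i}(p_i\to q_i)\to(p_{n+1}\vee p_{n+2})\big)$, and suppose towards a contradiction that $T\nvdash\sigma(C_j)$ for each conclusion $C_j=\bigwedge_i(p_i\to q_i)\to p_j$. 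Then for every $j$ with $1\le j\le n+2$ there is a rooted Kripke model $(K_j,r_j)\models T$ with $r_j\Vdash(\sigma(p_i)\to\sigma(q_i))$ for all $i\le n$ but $r_j\nVdash\sigma(p_j)$. The plan is now to glue the $(K_j,r_j)$ together by placing a fresh root $r$ below their disjoint union; extensibility (in the precise form of \Cref{Definition: extensibility}) is exactly the hypothesis guaranteeing that the glued structure $M$ is again a model of $T$. As $\sigma$ of the premise is $T$-provable it is forced at every node of every $T$-model, so $M,r$ forces the premise, and the goal is to read off that $r$ forces $\sigma(p_{n+1})$ or $\sigma(p_{n+2})$; by persistence this propagates up to $r_{n+1}$ or $r_{n+2}$, contradicting the choice of $K_{n+1}$ or $K_{n+2}$.

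The step I expect to be the main obstacle is showing that the fresh root $r$ forces the antecedent $\bigwedge_{i}(\sigma(p_i)\to\sigma(q_i))$, since only then does forcing the premise at $r$ yield the disjunction. At the nodes inside the components $K_j$ these implications hold, because each $r_j$ forces them and forcing is persistent; the delicate case is $r$ itself, where one must rule out $r\Vdash\sigma(p_i)$ together with $r\nVdash\sigma(q_i)$. This is a tightness requirement on the gluing, and whether it can be met depends on how the domain and interpretation at the new root are chosen---so it is precisely here that the concrete content of extensibility for the particular theory $T$, rather than a generic root-addition, must be invoked. Two further points have to be pinned down against \Cref{Definition: extensibility}: that extensibility is available for the disjoint union of the several models $K_1,\dots,K_{n+2}$ and not merely for a single one, and that the required closure of ${\vsim_T}$ under the basis-generating operations indeed holds. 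An alternative to the basis reduction would be to invoke directly a semantic characterisation of ${\vsim_\IPC}$ in terms of Kripke models closed under root-addition, but the difficulty concentrates in the same gluing step.
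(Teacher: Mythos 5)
Your overall architecture is exactly the paper's: the inclusion ${\vsim_T}\subseteq{\vsim_\IPC}$ via \Cref{Theorem: Bound on admissible rules}, and the converse by reducing to Visser's rules and gluing countermodels below a fresh root obtained from extensibility (this is precisely \Cref{Theorem: Vissers rules admissibility} combined with \Cref{Lemma: T extensible implies Visser admissible}). However, the step you single out as the ``main obstacle''---getting the new root $r$ to force the antecedent $\bigwedge_{i\le n}(\sigma(p_i)\to\sigma(q_i))$---is not an obstacle at all, and your diagnosis that it imposes a ``tightness requirement on the gluing'' needing theory-specific content of extensibility beyond generic root-addition is wrong. In your own setup you chose, for \emph{every} $j$ with $1\le j\le n+2$ (so in particular for every $i\le n$), a model whose root $r_j$ fails $\sigma(p_j)$. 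By persistence, if $r$ forced $\sigma(p_i)$ then so would $r_i\ge r$; hence $r\not\Vdash\sigma(p_i)$ for all $i\le n$, and each implication $\sigma(p_i)\to\sigma(q_i)$ holds at $r$ vacuously. At any other node $w>r$, $w$ lies above some root $r_j$, which forces all these implications, so persistence gives them at $w$. Thus \emph{any} root-addition preserving $T$---which is all that \Cref{Definition: extensibility} asserts---suffices, and the contradiction closes: $r$ forces the premise (as $M^+\Vdash T$) and its antecedent, hence $\sigma(p_{n+1})\vee\sigma(p_{n+2})$, contradicting $r\not\Vdash\sigma(p_j)$ for $j=n+1,n+2$. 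Your side worry about disjoint unions is likewise harmless: forcing at a node depends only on the cone above it, so a disjoint union of $T$-models is a $T$-model and extensibility applies to it directly, as in the paper.

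There is a second, genuine gap in your reduction to a basis. The rules you write down, without a side disjunct, are the restricted rules $V'_n$; the basis theorem the paper invokes (\cite[Theorem 3.9]{Iemhoff2005}, inside \Cref{Theorem: Vissers rules admissibility}) concerns the rules $V_n$ carrying an extra disjunct $C$ in premise and conclusion, and the structural operations you list (substitution, weakening, cut) do not allow applying a rule under a disjunction, so the $C$-free family does not by itself generate ${\vsim_\IPC}$. Your gluing argument proves admissibility of $V'_n$ in $T$; to lift this to $V_n$ you need the disjunction property of $T$, which fortunately follows from extensibility by the very same construction---glue two countermodels below a fresh root (\Cref{Lemma: DP}). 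This is exactly how the paper organises the argument ($\mathrm{DP}$ first, then $V'_n$), so the repair is small, but as written your basis step has a hole, and your attribution of the basis to Rybakov is off (Rybakov proved decidability of admissibility; the basis is due to Iemhoff, and independently Rozi\`ere).
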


The {\it Visser rule} $V_n$ is the following rule for propositional formulas $A_i$, $B_i$ and $C$:
\begin{equation*}
    \frac{(\bigwedge_{i = 1}^n (A_i \rightarrow B_i) \rightarrow (A_{n+1} \vee A_{n+2})) \vee C}{\bigvee_{j=1}^{n+2}(\bigwedge_{i=1}^n (A_i \rightarrow B_i) \rightarrow A_j) \vee C}
    \tag{$V_n$}
\end{equation*}
The collection of {\it Visser rules} consists of the rules $V_n$ for every $n$. 

\begin{theorem}
    \label{Theorem: Vissers rules admissibility}
    Let $T$ be a theory with $\Logic(T) = \IPC$. If the Visser rules are admissible in $T$, then the admissible rules of $T$ are exactly those of $\IPC$. 
\end{theorem}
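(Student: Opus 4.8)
The plan is to prove the two inclusions ${\vsim_T} \subseteq {\vsim_\IPC}$ and ${\vsim_\IPC} \subseteq {\vsim_T}$ separately, the key external input being Iemhoff's theorem that the Visser rules form a basis for the admissible rules of $\IPC$. The first inclusion is immediate from \Cref{Theorem: Bound on admissible rules}: since $\Logic(T) = \IPC$, any rule $A \vsim_T B$ satisfies $A \vsim_{\Logic(T)} B$, that is, $A \vsim_\IPC B$. So all the work lies in the converse inclusion.

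For the converse, I would first record that $\vsim_T$ has the structural properties of a substitution-closed multi-conclusion consequence relation. Reflexivity, monotonicity and cut follow directly from \Cref{Definition: admissibility relation}, and closure under substitution holds because precomposing a $T$-substitution $\sigma$ with a propositional substitution $\rho$ again yields a $T$-substitution $\sigma \circ \rho$; hence $A \vsim_T B$ gives $\rho(A) \vsim_T \rho(B)$ by applying the admissibility of $A/B$ to $\sigma \circ \rho$. Next I would observe that $\vsim_T$ contains every rule derivable in $\IPC$: if $A \to B$ is a theorem of $\IPC$, then since $\IPC = \Logic(T)$ it is a tautology of $T$, so by \Cref{Proposition: tautology characterisation} we have $T \vdash \sigma(A) \to \sigma(B)$ for every $\sigma$ (using that $\sigma$ commutes with $\to$), whence $T \vdash \sigma(A)$ implies $T \vdash \sigma(B)$ by modus ponens in $T$; thus $A \vsim_T B$. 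By hypothesis, $\vsim_T$ moreover contains all the Visser rules $V_n$.

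It then remains to invoke the basis theorem. Iemhoff showed that the Visser rules, together with the derivable rules of $\IPC$, generate all of $\vsim_\IPC$ under the structural rules and substitution; equivalently, $\vsim_\IPC$ is the least substitution-closed consequence relation extending the derivable rules of $\IPC$ that contains every $V_n$. Since $\vsim_T$ is exactly such a consequence relation by the previous paragraph, we obtain ${\vsim_\IPC} \subseteq {\vsim_T}$, and combined with the first inclusion this yields ${\vsim_T} = {\vsim_\IPC}$, as required. The main obstacle is entirely packaged into the basis theorem, which is the deep ingredient here; the surrounding argument reduces to checking that admissibility in $T$ enjoys precisely the closure properties (consequence-relation structure plus substitution-invariance) that let a basis transfer from $\IPC$ to $T$. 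I would flag that one should fix whether rules are taken single- or multi-conclusion and cite the corresponding form of the basis theorem, but this does not affect the structure of the argument.
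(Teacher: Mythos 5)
Your proposal is correct and follows essentially the same route as the paper: both directions rest on \Cref{Theorem: Bound on admissible rules} for ${\vsim_T} \subseteq {\vsim_\IPC}$ and on Iemhoff's basis theorem (that the Visser rules axiomatise $\vsim_\IPC$ over the derivable rules of $\IPC$) for the converse. Your formulation via the least substitution-closed consequence relation containing the $\IPC$-derivable rules and the $V_n$ is just an abstract repackaging of the paper's induction along a proof tree in $\IPC$ extended with Visser's rules, with your closure-property checks playing the role of the paper's step-by-step verification that each rule application is admissible in $T$.
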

\begin{proof}
    Let $A$ and $B$ be propositional formulas. By \Cref{Theorem: Bound on admissible rules}, it suffices to show that if $A \vsim_{\IPC} B$, then $A \vsim_T B$. So assume that $A \vsim_{\IPC} B$. By a theorem of Iemhoff \cite[Theorem 3.9]{Iemhoff2005}, this means that $A \vdash_\IPC^\mathsf{V} B$, where $\vdash_\IPC^\mathsf{V}$ denotes the derivability relation of $\IPC$ extended with all of Visser's rules. In other words, there is a proof tree, potentially using $A$ as a premise, all steps in which are instances of the rules of $\IPC$ and $\mathsf{V}$, and whose conclusion is $B$. Given that Visser's rules are admissible in $T$, and using the fact that $T$ is based on intuitionistic logic, it is straightforward to see that every rule application in the proof tree is admissible in $T$. Hence, $B$ is an admissible consequence of $A$, $A \vsim_T B$.
\end{proof}

The two theorems give two routes to proving that the propositional admissible rules of a theory $T$ are exactly those of $\IPC$. We will see below that \Cref{Theorem: Extensible admissibility} can be seen as an instance of \Cref{Theorem: Vissers rules admissibility} because Visser's rules are admissible in any extensible theory (\Cref{Lemma: T extensible implies Visser admissible}). In this section, we covered the preliminaries for theories based on intuitionistic logic. There is more to say for the extreme case of theories based on classical logic, see \Cref{Section: classical case}.

\section{Survey on techniques and results}
\label{Section: techniques and results}

\subsection{Constructive set theories}
\label{Subsection: Constructive set theories}

\begin{figure}
    \centering
    {\scriptsize
    \begin{align*}
        &\forall x \forall y (x = y \leftrightarrow \forall z (z \in x \leftrightarrow z \in y)) \tag{Extensionality} \\
        &\forall x \forall y \exists z \forall w (w \in z \leftrightarrow w = y \vee w = z) \tag{Pair} \\
        &\forall x \exists y \forall z (z \in y \leftrightarrow \exists w \in x (z \in w)) \tag{Union} \\
        &\exists x \forall y (y \notin x) \tag{Empty Set} \\
        &\exists x (\emptyset \in x \wedge \forall y(y \in x \rightarrow y \cup \set{y} \in x)) \tag{Infinity} \\
        &\forall x \exists y \forall z (z \in y \leftrightarrow (z \in x \wedge \phi(z))) \tag{Separation} \\
        &\forall x (\forall y \in x \exists z \phi(y,z) \rightarrow \exists w \forall y \in x \exists z \in w \phi(y,z)) \tag{Collection} \\
        &\forall x (\forall y \in x \exists ! z \phi(y,z) \rightarrow \exists w \forall y \in x \exists z \in w \phi(y,z)) \tag{Replacement} \\
        &\forall x (\forall y \in x \exists z \phi(y,z) \rightarrow \exists w (\forall y \in x \exists z \in w \phi(y,z) \wedge \forall z \in w \exists y \in x \phi(y,z)) \tag{Strong Collection} \\
        &\forall x \exists y \forall w (w \in y \leftrightarrow w \subseteq x) \tag{Power Set} \\
        &\forall x \forall y \exists z \forall w (\forall a \in x \exists b \in y \phi(a,b,w) \rightarrow \exists c \in z (\forall a \in x \exists b \in c \phi(a,b,w) \wedge \forall b \in c \exists a \in x \phi(a,b,w))) \tag{Subset Collection} \\
        &\forall x \forall y \exists z (\text{``$z$ is full in $x$ and $y$''}) \tag{Fullness} \\
        &(\forall x (\forall y \in x \phi(y)) \rightarrow \phi(x) ) \rightarrow (\forall x \phi(x)) \tag{$\in$-induction} \\
        & \forall x \forall y \exists z \forall w (w \in z \leftrightarrow \text{``$w$ is a function $x \to y$''}) \tag{Exponentiation} \\
        &\exists x [(\emptyset \in x \wedge \forall y(y \in x \rightarrow y \cup \set{y} \in x)) \wedge (\forall z (\emptyset \in z \wedge \forall y(y \in z \rightarrow y \cup \set{y} \in z)) \rightarrow x \subseteq z)] \tag{Strong Infinity} \\
        & \forall x [(\forall y (y \in x \rightarrow \exists z (z \in y))) \rightarrow \exists f: x \to \bigcup x \ f(x) \in x]
        \tag{Axiom of Choice}
    \end{align*}
    }
    \caption{Axioms of set theory.}
    \label{fig:axioms}
\end{figure}

In this section, we can only give a very brief introduction to constructive set theory. As usual, the language $\mathcal{L}_\in$ of set theory is a first-order language with equality and a single binary relation symbol `$\in$' to denote set membership. We consider the bounded quantifiers $\forall x \in a \ \phi(x)$ and $\exists x \in a \ \phi(x)$ to be abbreviations for $\forall x (x \in a \rightarrow \phi(x))$ and $\exists x (x \in a \wedge \phi(x))$, respectively. The crucial aim of intuitionistic and constructive set theories is to provide a set-theoretic foundation for mathematics on the basis of intuitionistic instead of classical logic. We will now introduce the relevant systems.

In \cref{fig:axioms}, we have spelt out all the axioms of set theory, giving rise to the following theories. Note that ``$z$ is full in $x$ and $y$'' means that every element of $z$ is a total relation between $x$ and $y$ and for every total relation $w$ between $x$ and $y$, there is some $u \in z$ such that $u \subseteq w$. We can now define the relevant theories. The \textit{bounded separation} consists of all instances of (Separation) where $\phi$ is a bounded formula (i.e. $\phi$ is $\Delta_0$). In a similar way, we obtain the axiom scheme of \textit{bounded collection}.

\begin{definition}
    We define the following set theories on the basis of intuitionistic first-order logic.
    \begin{enumerate}
        \item \textit{Intuitionistic Zermelo-Fraenkel Set Theory}, $\IZF$, consists of the axioms and schemes of extensionality, empty set, union, pairing, infinity, separation, $\in$-induction, collection and power set.
        \item \textit{Constructive Zermelo-Fraenkel Set Theory}, $\CZF$ consists of the axioms and schemes of extensionality, empty set, union, pairing, strong infinity, bounded separation, $\in$-induction, strong collection and subset collection.
        \item \textit{Intuitionistic Kripke-Platek set theory}, $\IKP$, consists of the axioms and schemes of extensionality, empty set, union, pairing, infinity, bounded separation, $\in$-induction, and bounded collection.
        \item \textit{Basic constructive set theory}, $\BCST$, consists of the axioms and schemes of extensionality, empty set, union, pairing, replacement and bounded separation.
        \item     \textit{Elementary constructive set theory}, $\ECST$, consists of the axioms and schemes of extensionality, empty set, pairing, union, strong infinity, bounded separation and replacement.
    \end{enumerate}
    The theory $\IZFR$ is $\IZF$ with replacement instead of collection, and $\CZFER$ is $\CZF$ with exponentiation and replacement instead of strong collection and subset collection.
\end{definition}

Note that $\CZF$ is usually formulated with the axiom of infinity instead of strong infinity. Obviously, the axiom of strong infinity implies infinity. The converse is also true on the basis of $\CZF$ without any infinity axioms {\cite[Section 2.1]{Aczel2011LCSTAID}}; so we can consider $\CZF$ to be formulated with the axiom of strong infinity instead of infinity, as this will be important later. For a detailed development of the mathematics of and in constructive set theories, we refer the reader to the notes of Aczel and Rathjen \cite{AczelRathjen2010}. Whenever we discuss results for an arbitrary set theory $T$ in this paper, we assume that $T$ contains at least intuitionistic logic. 

\subsection{Techniques and results}

We will now survey what is known about the logical structure of constructive set theories. For many years, only the following two negative results were known.

\begin{theorem}[Diaconescu {\cite{Diaconescu1975}}; Goodman and Myhill {\cite{GoodmanMyhill1978}}]
    \label{Theorem: Diaconescu-Goodman-Myhill}
    Let $T$ be a set theory that proves $0 \neq 1$ as well as the axioms of extensionality, pairing, and the separation scheme. If $T$ proves the axiom of choice, then $T$ proves all instances of the law of excluded middle.
\end{theorem}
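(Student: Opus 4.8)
The plan is to prove the classical Diaconescu argument, showing that the axiom of choice forces excluded middle. Let $\phi$ be an arbitrary formula (with no free occurrence of the variables I introduce below); my goal is to derive $\phi \vee \neg\phi$. First I would use the separation scheme to carve out two subsets of $\{0,1\}$ (where I set $0 = \emptyset$ and $1 = \{0\}$, both of which exist by empty set and pairing):
\begin{align*}
    A &= \{ x \in \{0,1\} \mid x = 0 \vee \phi \}, \\
    B &= \{ x \in \{0,1\} \mid x = 1 \vee \phi \}.
\end{align*}
Both $A$ and $B$ are inhabited ($0 \in A$ and $1 \in B$), so the family $\{A, B\}$ — formed by pairing — consists of inhabited sets. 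Here it is important that separation is applied to a possibly unbounded $\phi$, which is why the hypothesis includes the full separation scheme.

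Next I would apply the axiom of choice to the family $\{A, B\}$ to obtain a choice function $f$ selecting an element $f(A) \in A$ and $f(B) \in B$. By the defining property of $A$ and $B$, we have $f(A) \in \{0,1\}$ with $f(A) = 0 \vee \phi$, and similarly $f(B) = 1 \vee \phi$. The key step is then a case distinction on the values of $f(A)$ and $f(B)$: since each lies in $\{0,1\}$, and using that $0 \neq 1$ is provable in $T$, we can decide by excluded middle \emph{at the metalevel of the two concrete values} — but crucially the decision is made inside $T$ using only that membership in the two-element set is decidable for the specific witnesses the choice function returns. Concretely, either $f(A) = f(B)$ or $f(A) \neq f(B)$. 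If $f(A) \neq f(B)$, then $\phi$ must fail: were $\phi$ to hold, both $A$ and $B$ would equal $\{0,1\}$ by extensionality, whence $f(A) = f(B)$ since $f$ is a function applied to equal arguments, a contradiction; so we conclude $\neg\phi$. If instead $f(A) = f(B)$, then from $f(A) = 0 \vee \phi$ and $f(B) = 1 \vee \phi$ one derives $\phi$: in the disjunctive case where both values avoid $\phi$ we would get $0 = f(A) = f(B) = 1$, contradicting $0 \neq 1$, so $\phi$ holds.

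The decisive observation that makes this constructively valid is that the disjunction $f(A) = f(B) \vee f(A) \neq f(B)$ is itself available without assuming excluded middle, because $f(A)$ and $f(B)$ are elements of the concrete two-element set $\{0,1\}$ whose equality is decidable from the axioms (using $0 \neq 1$ and extensionality to pin down the two possible values). Combining the two cases, I obtain $\phi \vee \neg\phi$, as desired, and since $\phi$ was arbitrary this yields every instance of excluded middle.

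The main obstacle to watch is the role of extensionality and the precise reasoning that $\phi$ implies $A = B$: one must verify carefully that $\phi \rightarrow (A = \{0,1\} \wedge B = \{0,1\})$ follows from the separation definitions, and then that $A = B$ legitimately yields $f(A) = f(B)$ — this uses that $f$ is a genuine function and that equal inputs give equal outputs, which in turn relies on extensionality to conclude $A = B$ in the first place. The hypotheses of the theorem (extensionality, pairing, separation, and $0 \neq 1$) are exactly what is needed to make each of these links go through, so the remaining work is to assemble these routine derivations into the two-case argument above.
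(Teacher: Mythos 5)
Your proof is correct: it is the standard Diaconescu--Goodman--Myhill argument, which the paper cites without reproducing (it only remarks that both proofs ``make crucial use of the Separation scheme'' to define sets from which a logical scheme can be derived---exactly the role of your sets $A$ and $B$). Your variant routes through the decidability of equality on $\{0,1\}$ (the case split $f(A)=f(B)\vee f(A)\neq f(B)$) rather than distributing the disjunctions $f(A)=0\vee\phi$ and $f(B)=1\vee\phi$ directly, but this is sound, since $x = y \vee x \neq y$ for $x,y\in\{0,1\}$ follows intuitionistically from the pairing axiom's characterisation of membership in $\{0,1\}$ together with $0\neq 1$.
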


\begin{theorem}[Friedman and Ščedrov {\cite{FriedmanScedrov1986}}]
    \label{Theorem: Friedman-Scedrov}
    Let $T$ be a set theory containing the axioms and schemes of extensionality, separation, pairing and (finite) union. Then $\IQC \subsetneq \QLogic(T)$. In particular, $\IQC \subsetneq \QLogic(\IZF) \subsetneq \CQC$.
\end{theorem}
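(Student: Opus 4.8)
The plan is to prove Friedman and \v{S}\v{c}edrov's theorem by exhibiting a single first-order formula that is valid in every set theory $T$ satisfying the listed axioms but is not a theorem of $\IQC$. The natural candidate comes from the interplay between extensionality and the existence of an emptiness-like predicate. First I would consider the formula schema that expresses a weak form of the constant domain or linearity principle forced by the existence of sets: the key observation is that extensionality lets us compare sets coextensively, and separation lets us form, from any formula $\phi$, the set $\Set{z \in a}{\phi}$ for a fixed $a$ (say a two-element set obtained from pairing and empty set). Crucially, whether such a separated subset equals $a$ or equals the empty set is governed by $\phi$, and decidability questions about set equality under extensionality translate into first-order principles over the assignment $\sigma$.

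The concrete route I would take is to pin down the specific predicate-logical principle that Friedman and \v{S}\v{c}edrov isolate. Using pairing and empty set, fix two provably distinct sets $0 \neq 1$ is not assumed, but one can still build a set $a$ with two candidate elements; then for a unary predicate $P$, consider the assignment sending $P(x)$ to a formula asserting membership in a separated subset. The validity of a formula like $\forall x (P(x) \vee \neg P(x)) \to (\dots)$ is not what we want, since that would be excluded middle; instead the target is a genuinely intermediate predicate principle, for instance a version of $\forall x\, (\phi \vee \psi(x)) \to (\phi \vee \forall x\, \psi(x))$ (a constant-domain-style schema) that holds because the relevant sets can be uniformly separated. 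I would verify by hand that under every $T$-substitution the translated formula becomes a consequence of extensionality, pairing, union, and separation, using the fact that separated subsets exist for \emph{arbitrary} formulas and that extensionality collapses coextensive sets.

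The main obstacle, and the heart of the proof, will be two-fold. First, one must exhibit a concrete first-order formula $A$ and check that for \emph{every} substitution $\sigma$ we have $T \vdash \sigma(A)$, i.e.\ that $A$ is a tautology of $T$ in the sense of \Cref{Proposition: tautology characterisation}; this requires a careful set-theoretic argument showing that the separation-plus-extensionality mechanism genuinely forces the principle, uniformly in the substituted formulas. Second, one must show $A \notin \IQC$, which I would do by producing a Kripke countermodel for $\IQC$ refuting $A$, thereby witnessing $\IQC \subsetneq \QLogic(T)$. For the final clause, the upper bound $\QLogic(\IZF) \subsetneq \CQC$ follows from the disjunction and existence properties of $\IZF$ (which block full excluded middle), so $\IZF$ does not validate all of classical predicate logic; combined with the general lower bound applied to $T = \IZF$ this yields the strict chain $\IQC \subsetneq \QLogic(\IZF) \subsetneq \CQC$. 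I expect verifying the uniform $T$-provability of $\sigma(A)$ across all substitutions to be the genuinely delicate step, since it is where the specific axioms (extensionality and unrestricted separation over a bounded witness set) must be shown to interact exactly as required.
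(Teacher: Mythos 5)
You have the right skeleton, and it matches what the paper (which only cites Friedman--Ščedrov and remarks that the proof ``makes crucial use of the Separation scheme to define sets that allow to derive a logical scheme for all formulas'') indicates about the original argument: exhibit one concrete formula $A$, prove $T \vdash \sigma(A)$ for \emph{every} $T$-substitution $\sigma$ using Separation and Extensionality, refute $A$ in $\IQC$ by a Kripke countermodel, and obtain the upper strictness $\QLogic(\IZF) \subsetneq \CQC$ from the disjunction property of $\IZF$: with $\sigma(p)$ a Gödel-style independent sentence, $\IZF \nvdash \sigma(p \vee \neg p)$, while every $\IZF$-tautology is classically valid since $\IZF$ is subclassical (and the language is equality-free, so finite classical countermodels can be inflated to the whole universe). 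That final clause of your proposal is essentially correct and standard.

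The genuine gap is at exactly the point you yourself flag as ``the heart of the proof'': you never produce the witnessing formula, and the one concrete candidate you name --- a constant-domain-style schema $\forall x\,(\phi \vee \psi(x)) \to (\phi \vee \forall x\, \psi(x))$ --- is the wrong kind of principle, and your justification for it is a non sequitur. Constant-domain schemas express that the range of the \emph{unbounded} quantifiers does not grow along Kripke extensions; Separation only yields subsets of a \emph{fixed} set and has no bearing whatsoever on the growth of the universe from node to node. Kripke models of full $\IZF$ with genuinely expanding domains exist (e.g.\ the Lubarsky-style and blended models discussed in \Cref{Section: techniques and results} of this paper), and in such models the standard refutation pattern for constant-domain principles is available, so ``the relevant sets can be uniformly separated'' cannot be turned into a derivation of every $\sigma$-instance of this schema in $T$ --- and you attempt none. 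The actual Friedman--Ščedrov principle is a more delicate quantifier-shift, engineered precisely so that Separation and Extensionality provide, \emph{inside} $T$ and uniformly in the substituted formulas, the sets witnessing the existential content of each $\sigma$-instance; identifying such a formula and verifying its tautology-hood in the sense of \Cref{Proposition: tautology characterisation} is the entire mathematical content of the lower bound $\IQC \subsetneq \QLogic(T)$. (A smaller conceptual wobble: in your second paragraph you reason from a single chosen assignment sending $P$ to membership in a separated subset; that is legitimate heuristics for \emph{designing} the principle, but tautology-hood quantifies over all substitutions, as your later paragraph correctly states and then defers.) As it stands, your proposal establishes the easy half $\QLogic(\IZF) \subsetneq \CQC$ but not the main claim.
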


While we do not wish to spell out the proofs of these theorems in detail, we will point out here that both of them make crucial use of the Separation scheme to define sets that allow to derive a logical scheme for all formulas. This observation is crucial as we will see in a while that Passmann \cite{Passmann2022} showed that both theorems fail on the basis of $\CZF$, i.e. when the set theory does not contain the full separation scheme. Note that \Cref{Theorem: Friedman-Scedrov} leads to the following open question.
\begin{question}
    Let $T$ be a set theory, based on intuitionistic logic, satisfying the conditions of \Cref{Theorem: Friedman-Scedrov}. What is $\QLogic(T)$? In particular, what is $\QLogic(\IZF)$?
\end{question}

Before considering more recent results, we will briefly recall some notation and results on Kripke models. A \textit{Kripke frame $(K,\leq)$} consists of a partial order $\leq$ on a set $K$. A \textit{valuation $V$ on $(K,\leq)$} is a function $V$ assigning sets of propositional letters to nodes such that $v \leq w$ and $p \in V(v)$ entails $p \in V(w)$. A \textit{Kripke model $(K,\leq,V)$} consists of a Kripke frame $(K,\leq)$ and a valuation $V$ on $(K,\leq)$. The valuation $V$ can be extended to the forcing relation $\Vdash$ between Kripke models and valuation Kripke models and propositional formulas in the usual way. The defining property of valuations is called \textit{persistence} and transfers to all propositional formulas in Kripke models, i.e. $v \Vdash \phi$ and $v \leq w$ entails $w \Vdash \phi$ (this property also holds for all first-order formulas in the case of first-order models, see below). We refer to the literature for standard results about Kripke models for intuitionistic logic.

We are now ready to move to more recent results. All the results we are going to mention are obtained by model-theoretic methods, i.e. a result of the form $\QLogic(T) = J$ is usually obtained by, first, showing that $T$ proves all instances of $J$ (this is usually easy), and second, for every logical principle $A$ such that $J \not \vdash A$, one constructs a countermodel $M$ of $T$ which also fails that principle. In other words, there is then a $T$-substitution $\sigma$ such that $M \vDash T$ but $M \not \vDash \sigma(A)$. We say that an intermediate logic $J$ is \textit{characterised} by a class $K$ of Kripke models if and only if $J \vdash A$ if and only if $M \vDash A$ for every $M \in K$. The class of \textit{finite trees} consists of the finite partial orders $(P,\leq)$ such that for any $p \in P$ the set $\Set{q \in P}{q \leq p}$ is linearly ordered.

\begin{theorem}[Passmann {\cite{Passmann2020}}]
    Let $T \subseteq \IZF$ be a set theory. If $J$ be an intermediate propositional logic that is characterised by a class of finite trees, then $\Logic(T(J)) = J$. In particular, $\Logic(T) = \IPC$.
\end{theorem}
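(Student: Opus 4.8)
The plan is to establish the two inclusions $J \subseteq \Logic(T(J))$ and $\Logic(T(J)) \subseteq J$ by the model-theoretic strategy described above, with all of the work lying in the second. The first inclusion is immediate from \Cref{Proposition: tautology characterisation}: if $J \vdash A$, then because $T(J)$ is by definition closed under the logic $J$, we have $T(J) \vdash \sigma(A)$ for every $T$-substitution $\sigma$, so $A \in \Logic(T(J))$. For the converse I must show that any propositional $A$ with $J \not\vdash A$ fails to be a tautology of $T(J)$, i.e. exhibit an $\mathcal{L}_\in$-substitution $\sigma$ with $T(J) \not\vdash \sigma(A)$.

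First I would fix such an $A$ and invoke the hypothesis that $J$ is characterised by a class of finite trees to obtain a finite tree $(K,\leq)$, with root $r$, carrying a propositional valuation that refutes $A$ at $r$ but validates every theorem of $J$. The heart of the argument is to lift this propositional countermodel to a set-theoretic one: I would construct a Kripke model $\mathcal{M} \models \IZF$ whose underlying frame is $(K,\leq)$. The truth values of $\mathcal{M}$ then form the Heyting algebra $\mathrm{Up}(K)$ of upward-closed subsets of $K$, whose propositional logic coincides with the logic of the frame $(K,\leq)$. Two consequences follow at once. Since $T \subseteq \IZF$, we get $\mathcal{M} \models T$; and since $(K,\leq)$ validates $J$, every $\mathcal{L}_\in$-instance of a $J$-scheme is forced throughout $\mathcal{M}$, so in fact $\mathcal{M} \models T(J)$.

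It remains to realise the refuting valuation by genuine $\mathcal{L}_\in$-sentences. Here one uses that $\mathrm{Up}(K)$ is generated under the Heyting operations by the principal upsets $\uparrow\! k$, and arranges the construction of $\mathcal{M}$ so that each such generator is the forcing-value $\heyting{\psi_k}$ of a parameter-free sentence $\psi_k$ (intuitively, $\psi_k$ asserts the presence of a set that is ``switched on'' exactly from node $k$ upwards). For a propositional letter $p$ with truth-set $U_p \in \mathrm{Up}(K)$, writing $U_p = \bigcup_{k \in \min(U_p)} \uparrow\! k$, I set $\sigma(p) := \bigvee_{k \in \min(U_p)} \psi_k$, so that $\heyting{\sigma(p)} = U_p$, and extend $\sigma$ to commute with the connectives. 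Because the valuation $\heyting{\cdot}$ interprets the propositional connectives by the Heyting operations of $\mathrm{Up}(K)$, a routine induction gives $\heyting{\sigma(A)} = \bar A(U_{p_1},\dots,U_{p_n})$, the evaluation of $A$ in $\mathrm{Up}(K)$ under $p_i \mapsto U_{p_i}$; this equals the upset of nodes forcing $A$ in the propositional model, which omits $r$. Hence $\heyting{\sigma(A)} \neq \top$, so $\mathcal{M} \not\models \sigma(A)$, and as $\mathcal{M} \models T(J)$ we conclude $T(J) \not\vdash \sigma(A)$ and $A \notin \Logic(T(J))$. The final claim follows by taking $J = \IPC$, which is characterised by the class of all finite trees and for which $T(\IPC) = T$.

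I expect the main obstacle to be this last construction step: building a single Kripke model of $\IZF$ over $(K,\leq)$ that simultaneously (a) validates $\IZF$, hence $T$, (b) has frame $(K,\leq)$, so that its internal logic faithfully reproduces the frame's logic and the model validates all of $J$, and (c) realises the principal upsets as forcing-values of parameter-free sentences. Point (c) is delicate precisely because substitutions must deliver $\mathcal{L}_\in$-\emph{sentences} rather than names with parameters, so the frame must be coded into the model by sentences; this is where one needs a tailored construction (e.g. iterating forcing-style extensions along the tree) rather than merely citing the existence of Heyting-valued models of $\IZF$. Once (a)--(c) are in place, the compositionality of $\heyting{\cdot}$ and the finite-tree characterisation of $J$ do the rest.
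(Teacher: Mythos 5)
Your overall architecture is the standard one and matches the known strategy: lift a finite-tree countermodel for $J$ to a set-theoretic Kripke model on the same frame, observe that a frame validating $J$ forces every $\mathcal{L}_\in$-instance of $J$ (so $\mathcal{M} \Vdash T(J)$), and realise the refuting valuation by sentences built from node-characterising sentences. The problem is that everything you flag as ``the main obstacle'' is the entire mathematical content of the theorem, and your proposal supplies neither half of it. First, the sentences $\psi_k$ with $\heyting{\psi_k} = {\uparrow} k$: a parameter-free $\mathcal{L}_\in$-sentence cannot directly ``assert the presence of a set switched on exactly from node $k$ upwards''; the frame is not visible in the language. The actual device (Smorynski's trick, carried out in this paper's proof of \Cref{Theorem: Extensibility De Jongh Property}) is arithmetical: iterated G\"odel sentences give, for each $n$, sentences $\phi^s$ ($s \in 2^n$) that are pairwise inconsistent yet each consistent with the theory; one places classical models $M_{s_l} \vDash \phi^{s_l}$ at the leaves and takes $\gamma_v := \neg\neg \bigvee_{l \in E_v} \phi^{s_l}$, whose truth set is ${\uparrow} v$ because on a finite tree $w \Vdash \neg\neg\chi$ holds iff every leaf above $w$ forces $\chi$. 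Note that this separates only nodes with distinct leaf-sets, i.e.\ it works on finite \emph{splitting} trees --- on a chain, all nodes have the same leaf set, and no such sentences realise the principal upsets --- so one must first replace each tree by a splitting tree by duplicating branches and check that this changes neither the refutation at the root nor the validity of $J$. Your plan is silent on both the incompleteness mechanism and the splitting reduction.

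Second, the model itself: you need $\mathcal{M} \Vdash \IZF$ --- full $\IZF$, including collection --- on the given finite tree, and your one concrete suggestion, ``iterating forcing-style extensions along the tree,'' is exactly the method this paper develops in \Cref{Theorem: Extensible axioms and schemes}, where it provably yields extensionality, separation, power set, replacement, etc., but \emph{not} collection: since collection witnesses need not be unique, there is no canonical way to define the values of the collecting set at the old nodes, which is why the remark following \Cref{Corollary: admissible rules of IZFR CZFER ECST BCST} flags collection as resistant to this method and the extensibility of $\IZF$ is left there as an open question; the iterated construction only delivers $\IZFR$, $\CZFER$, $\ECST$ and $\BCST$. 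Passmann's actual proof avoids iteration: the blended model is built in a single step over the whole tree from classical models placed at the leaves (inner-node domains consist of coherent functions into those models, cut off at an inaccessible), and collection is verified for this global construction directly. So your skeleton is sound and correctly locates the difficulty, but the two lemmas it rests on --- item (c) and the existence of an $\IZF$-model on the frame --- constitute the theorem, and the filling you gesture at is known, by this very paper, to fall short of full $\IZF$.
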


This result was proved by using so-called \textit{blended} Kripke models, and we will see an adaptation of this technique below in \Cref{Section: Logics and rules of set theories}. For now, we just note that Passmann's blended models are inspired by the models that Lubarksy \cite{Lubarsky2005} used to show various independence results around $\CZF$.

The next two results were proved by Iemhoff and Passmann \cite{IemhoffPassmann2021}. Note that $\IKP^+$ is obtained by adding weak versions of strong collection and subset collection to $\IKP$ (for details see their paper). An intermediate logic $J$ is called \textit{Kripke-complete} if there is a class $K$ of Kripke models such that $J$ is characterised by $K$.

\begin{theorem}[Iemhoff and Passmann {\cite{IemhoffPassmann2021}}]
    Let $T \subseteq \IKP^+ + \MP + \AC$ be a set theory. If $J$ is a Kripke-complete intermediate propositional logic, then $\Logic(T(J)) = J$.
\end{theorem}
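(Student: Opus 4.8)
The plan is to prove the equivalence $\Logic(T(J)) = J$ for every Kripke-complete intermediate propositional logic $J$ and every $T \subseteq \IKP^+ + \MP + \AC$. As explained in the preamble to the model-theoretic results, the statement splits into two inclusions, and the soundness direction $J \subseteq \Logic(T(J))$ is the routine one: since $T(J)$ is by definition the closure of $T$ under $J$, every propositional instance of a theorem of $J$ is provable in $T(J)$, so every axiom of $J$ (and hence every theorem, by closure) is a tautology of $T(J)$ in the sense of \Cref{Proposition: tautology characterisation}. The real content is the completeness direction $\Logic(T(J)) \subseteq J$: I must show that whenever $J \not\vdash A$ for a propositional formula $A$, then $A$ is not a tautology of $T(J)$, i.e. there is a $T(J)$-substitution $\sigma$ with $T(J) \not\vdash \sigma(A)$.

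First I would use Kripke-completeness of $J$: since $J \not\vdash A$, there is a Kripke model $M$ in the characterising class $K$ with a node $w$ such that $M, w \not\Vdash A$. The heart of the argument is then to transport this propositional countermodel into a set-theoretic model of $T(J)$ that refutes the corresponding substitution instance. The natural tool here is the blended-models technique of Passmann mentioned just before this theorem, adapted to the weaker base theory: one builds a Kripke model of set theory whose underlying frame is (or contains) the frame of $M$, interleaving the propositional structure of $M$ with a suitable iterated-powerset-style construction of sets at each node so that the resulting structure validates the axioms of $T \subseteq \IKP^+ + \MP + \AC$. Concretely, I would attach to each node $v$ a set-theoretic universe so that the forcing of atomic set-theoretic formulas at $v$ tracks the propositional valuation of $M$ at $v$, and choose a $T(J)$-substitution $\sigma$ sending each propositional letter $p$ to a sentence whose truth value at each node mirrors $V(p)$. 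Then persistence and the commutation of $\sigma$ with the connectives give $M, w \not\Vdash A \iff \text{(the blended model)}, w \not\Vdash \sigma(A)$, so $T(J) \not\vdash \sigma(A)$.

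The delicate point — and the main obstacle — is verifying that the blended model really is a model of $T(J)$, not merely of $T$. It is comparatively easy to arrange that the construction validates the axioms of $\IKP^+ + \MP + \AC$; the subtlety is that $T(J)$ is closed under the logic $J$, so the model must force every propositional-logical consequence dictated by $J$, which is exactly why the frame must be drawn from the characterising class $K$ rather than being arbitrary. I would therefore check that the logic forced by the blended model at the relevant nodes is contained in $J$ (so that closing $T$ under $J$ introduces no sentence the model fails), while the bounded/constructive character of $\IKP^+$ — the absence of full Separation, as emphasised for the Passmann results — is what prevents the collapse phenomena of \Cref{Theorem: Friedman-Scedrov} and keeps the substitution instances genuinely independent. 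The verification that each axiom scheme of $T$ survives the blending, particularly the weak forms of strong collection and subset collection in $\IKP^+$ together with $\MP$ and $\AC$, is where the technical bulk lies and where one must lean on the detailed constructions of Iemhoff and Passmann \cite{IemhoffPassmann2021}.
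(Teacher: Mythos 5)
Your overall skeleton — soundness is routine, completeness goes by transporting a Kripke countermodel for $J$ into a set-theoretic Kripke model of $T(J)$ via a substitution whose truth values mirror the valuation, with the frame drawn from the characterising class so that the model satisfies all $J$-instances and hence $T(J)$ — matches the actual proof strategy. But the concrete construction you reach for is the wrong one, and the error is fatal rather than cosmetic. This theorem is proved in \cite{IemhoffPassmann2021} with Kripke models \emph{with classical domains}: each node of the given frame is equipped with a classical model of set theory, coherently linked by transition functions. That choice is forced by the hypotheses. First, $T$ may contain $\MP$ and $\AC$, and classical domains validate these essentially for free, since each node locally looks like a classical universe. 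Second, $J$ is an \emph{arbitrary} Kripke-complete intermediate logic, so the construction must work over arbitrary frames; it is the simple structure of classical-domain models that gives the needed control over the logic, at the price that such models in general only validate $\IKP^+$ (by the result of Iemhoff \cite{Iemhoff2010}; Passmann \cite{Passmann2018} shows they can even fail Exponentiation) — which is exactly why the theorem is stated for $T \subseteq \IKP^+ + \MP + \AC$ and not for stronger theories.

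The blended models of \cite{Passmann2020} that you propose to adapt point in the opposite direction on both axes. Blended models satisfy full $\IZF$, in particular the full Separation scheme; by \Cref{Theorem: Diaconescu-Goodman-Myhill}, any model of Extensionality, Pairing and full Separation that also forced $\AC$ would validate every instance of excluded middle, so no blended-style model can witness $\Logic(T(J)) \subseteq J$ for a theory $T$ containing $\AC$ unless $J = \CPC$. Your own remark that the absence of full Separation in $\IKP^+$ ``keeps the substitution instances genuinely independent'' is correct, but blending is precisely a technique that builds full Separation into the model, so you cannot have it both ways: weakening the base theory is never the obstacle (any model of $\IZF$ is a model of $\IKP^+$); the obstacle is forcing the \emph{extra} axioms $\MP$ and $\AC$, which classical domains deliver and blending cannot. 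Moreover, the blended construction is only available over finite trees — which is why Passmann's theorem is stated for logics characterised by classes of finite trees — whereas an arbitrary Kripke-complete $J$ need not be characterised by any class of finite trees, so your adaptation could not reach the generality claimed here even setting $\AC$ aside.
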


\begin{theorem}[Iemhoff and Passmann {\cite{IemhoffPassmann2021}}]
    Let $T \subseteq \IKP^+ + \MP + \AC$ be a set theory. If $J$ is a Kripke-complete intermediate first-order logic contained in the least transitive model of $\ZFC + V = L$, then $\QLogic(T(J)) = J$. In particular, $\QLogic(T) = \IQC$.
\end{theorem}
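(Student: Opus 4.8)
The plan is to prove the two inclusions separately. The inclusion $J \subseteq \QLogic(T(J))$ is immediate: by construction $T(J)$ is axiomatised over $J$, so if $J \vdash A$ then every substitution instance $\sigma(A)$ is provable in $T(J)$, whence $A$ is a tautology of $T(J)$ by \Cref{Proposition: tautology characterisation}. The content lies in the reverse inclusion $\QLogic(T(J)) \subseteq J$, which I would prove by contraposition: given a first-order formula $A$ with $J \not\vdash A$, I would produce a substitution $\sigma$ and a Kripke model $\mathcal{K} \vDash T(J)$ with $\mathcal{K} \not\vDash \sigma(A)$, so that $A$ is not a tautology of $T(J)$.

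First I would invoke the Kripke-completeness of $J$: since $J \not\vdash A$, there is a first-order Kripke model $M$ in the characterising class, with underlying frame $(K,\leq)$ and domains $(D_w)_{w \in K}$, such that $M \not\vDash A$ at its root. Using the hypothesis that $J$ is contained in the least transitive model $M_0 \vDash \ZFC + V = L$, I would arrange that a representative for this countermodel $M$ is itself an element of $M_0$. The heart of the argument is then to turn $M$ into a Kripke model $\mathcal{K}$ of set theory over the same frame by the blended-model construction (in the spirit of Passmann \cite{Passmann2020} and Lubarsky \cite{Lubarsky2005}, and as adapted in \Cref{Section: Logics and rules of set theories}). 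At each node $w$ I would place a set-theoretic universe built from the constructible levels available inside $M_0$, blended with the logical domain $D_w$, and I would single out the logical elements by a fresh relation symbol $R$. Relativising quantifiers to $R$ exactly as in the proof of \Cref{Theorem: predicate rules pi2}---replacing $\exists x\, B$ and $\forall x\, B$ by $\exists x (R(x) \wedge B)$ and $\forall x (R(x) \rightarrow B)$---and choosing, for each predicate $P_i$, a set-theoretic formula $\sigma(P_i)$ that defines $M$'s interpretation of $P_i$ on the blended structure, I would obtain a substitution $\sigma$ with the key property that the forcing of $\sigma(A)$ in $\mathcal{K}$ coincides, node by node, with the forcing of $A$ in $M$. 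This forcing-matching lemma would yield $\mathcal{K} \not\vDash \sigma(A)$.

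Finally I would verify that $\mathcal{K} \vDash T(J)$. Since $T \subseteq \IKP^+ + \MP + \AC$, it suffices to check these axioms, and since the frame of $\mathcal{K}$ validates $J$, all first-order instances of $J$-theorems hold automatically, so that $\mathcal{K} \vDash T(J)$ follows once $\mathcal{K} \vDash T$. Here the hypotheses do the work: the definable well-ordering coming from $V = L$ inside $M_0$ supplies the choice functions needed for $\AC$; the absence of full Separation in $\IKP^+$ is precisely what keeps $\AC$ from collapsing the logic via \Cref{Theorem: Diaconescu-Goodman-Myhill}; the $\Delta_0$-character of the $\IKP^+$ schemes, together with $M \in M_0$ (so that collection over the Kripke structure can be reduced to the collection available in $M_0 \vDash \ZFC$), secures the bounded separation and collection axioms; and $\MP$ holds because the arithmetic of $\mathcal{K}$ is absolute. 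The in-particular claim is the special case $J = \IQC$, noting that $T = T(\IQC)$ since $T$ is based on intuitionistic logic.

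I expect the main obstacle to be exactly this last step together with the forcing-matching lemma: reconciling the set-theoretic universe demanded by $T$ with the prescribed first-order Kripke structure witnessing $J$, and showing that the blended model genuinely satisfies the collection-style axioms of $\IKP^+$ while its first-order logic is pinned down to $J$. The containment of $J$ in the least transitive model of $\ZFC + V = L$ is the hypothesis that makes this balancing act possible, since it lets the entire construction be carried out inside a single model of $\ZFC$ whose internal collection and definable choice can be transferred to $\mathcal{K}$.
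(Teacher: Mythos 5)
Your skeleton (easy inclusion, then contraposition via a countermodel transfer) matches the cited proof's shape, but two of your key steps fail as stated. First, the relativization move is not available: a $T$-substitution must send predicate symbols to $\mathcal{L}_\in$-formulas and commute with the connectives and quantifiers, so you cannot introduce a fresh relation symbol $R$ nor relativize quantifiers to it --- the device you borrow from \Cref{Theorem: predicate rules pi2} modifies formulas on the \emph{logical} side (to simulate relative interpretations when transferring an admissibility result), not on the side of substitutions. With relativized quantifiers, your $\sigma(A)$ is not a substitution instance, so $\mathcal{K} \not\Vdash \sigma(A)$ would not witness $A \notin \QLogic(T(J))$. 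The genuine difficulty, which the actual proof confronts head-on, is to make \emph{unrelativized} quantification over the full set-theoretic domain at each node track quantification over the logical domain $D_w$ (e.g.\ via a definable projection of each node's universe onto codes of $D_w$). Relatedly, the definition of assignment forbids parameters ($\sigma(P)$ may have at most arity-many free variables), so ``a formula that defines $M$'s interpretation'' must be parameter-free; this is exactly where the least transitive model of $\ZFC + V = L$ earns its place in the hypothesis --- it is pointwise definable, so the countermodel's data are definable without parameters --- not merely, as you suggest, an ambient $\ZFC$-universe in which to carry out the construction.

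Second, the construction and the verification of $\mathcal{K} \Vdash T(J)$ diverge from the cited route. Iemhoff and Passmann do not use blended models here but Kripke models \emph{with classical domains}: each node carries a classical model of $\ZFC$ (constructible levels), whence $\MP$ and $\AC$ hold at every node for free, and an earlier result of Iemhoff guarantees $\IKP$ (indeed $\IKP^+$); this also explains the cap $T \subseteq \IKP^+ + \MP + \AC$, since such models can fail exponentiation, as the survey notes. A blended model in the style of \Cref{Section: Logics and rules of set theories} has no reason to satisfy $\AC$ or $\MP$, and its first-order logic is much harder to pin down. Moreover, your claim that ``since the frame of $\mathcal{K}$ validates $J$, all first-order instances of $J$-theorems hold automatically'' is false in the predicate setting: first-order validity is not a frame property --- it depends on the domain structure, and $\mathcal{K}$ has far larger domains than the countermodel $M$ --- so one must actually prove that the constructed model forces every substitution instance of every $J$-theorem (the ``loyalty'' part of the Iemhoff--Passmann argument). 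Your observation that $\AC$ together with full Separation would collapse the logic via \Cref{Theorem: Diaconescu-Goodman-Myhill} correctly explains why the cap at $\IKP^+$ is necessary, but the positive verification that the model satisfies $T(J)$ while its logic stays exactly $J$ is the heart of the proof and is missing.
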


For these results, Iemhoff and Passmann use (what they call) Kripke models \textit{with classical domains}. These models are obtained by equipping each node of a Kripke frame with a classical model of set theory in a coherent way. An earlier result by Iemhoff \cite{Iemhoff2010} entails that these models will always satisfy $\IKP$. Passmann showed in his master's thesis \cite{Passmann2018} that this result is in a certain sense optimal: there are Kripke models with classical domains that do not satisfy the exponentiation axiom (which is a consequence of $\CZF$). The above results could be proved for large classes of logics because the simple structure of the Kripke models with classical domains allows much control about their logical structure.

Finally, Iemhoff and Passmann also obtained the following negative result for first-order logic with equality. Note that $\QLogic^=(T)$ is obtained just like $\QLogic(T)$ with the additional requirement that subsitutions commute with `$=$'.
\begin{theorem}[{\cite[Theorem 61]{IemhoffPassmann2021}}]
    Let $T$ be a set theory containing the axioms of extensionality, empty set and pairing. Then the first-order logic with equality of $T$, $\QLogic^=(T)$ , is strictly stronger than $\IQC^=$, i.e., $\IQC^= \subsetneq \QLogic^=(T)$.
\end{theorem}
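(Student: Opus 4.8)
The plan is to exhibit a single first-order formula $A$ with equality such that $\IQC^= \not\vdash A$, yet $T \vdash \sigma(A)$ for every $T$-substitution $\sigma$ (where substitutions now commute with $=$). By \Cref{Proposition: tautology characterisation} this shows $A \in \QLogic^=(T)$, giving the strict inclusion. The idea is to locate a logical principle that becomes valid precisely because the equality symbol is interpreted by genuine set-theoretic equality, which in any model of extensionality is a congruence with very rigid behaviour on the empty set and on pairs.

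First I would look for a schema that exploits the \emph{definability of distinct constants}. Using empty set and pairing, one can define terms $0 := \emptyset$ and $1 := \{\emptyset\}$ (as usual via their defining formulas) and prove in $T$ that $0 \neq 1$, since $\emptyset \in 1$ but $\emptyset \notin 0$. The key observation is that in a first-order language \emph{with equality}, a substitution must send the equality predicate to genuine equality, so any substitution instance of a formula built from $=$ inherits the reflexivity, symmetry, transitivity, and congruence laws for free, together with whatever the model forces about equality of the specific definable sets. Concretely, I would aim to write down a closed formula $A$ in the pure logic of equality (one binary predicate, interpreted as $=$) that is intuitionistically invalid but is forced in every extensional set model because equality there is \emph{decidable on the relevant definable elements}: for instance a principle asserting, for definable witnesses, that $x = y \vee \neg(x=y)$ holds, which fails in $\IQC^=$ but can be made to hold under every substitution once the witnesses are pinned down by $\emptyset$ and pairing.

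The concrete candidate I would try first is the formula expressing that equality is \emph{stable} or that a certain two-element configuration is decidable, of the shape
\begin{equation*}
    A \;=\; \forall x\, \forall y\, \big( (P(x) \wedge Q(y)) \rightarrow (x = y \vee \neg (x = y)) \big),
\end{equation*}
where $P,Q$ are unary predicates. Under a substitution $\sigma$, the predicates $P,Q$ become $T$-formulas $\sigma(P)(x),\sigma(Q)(y)$, and the point is to show $T$ proves the instance \emph{uniformly}: the strategy is to argue that using extensionality, empty set, and pairing, $T$ proves that any two sets satisfying definable properties either provably coincide or provably differ on a bounded witness, so that the disjunction is decided. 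To verify $\IQC^= \not\vdash A$, I would produce a two-node Kripke model with equality where, at the root, two elements are not yet forced equal but are forced equal higher up, so neither disjunct of $x=y \vee \neg(x=y)$ holds at the root, refuting $A$ in $\IQC^=$.

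The main obstacle I anticipate is the tension in choosing $A$ so that it is \emph{simultaneously} underivable in $\IQC^=$ and yet provable in $T$ under every substitution, without $T$ collapsing to full classical logic (which \Cref{Theorem: Diaconescu-Goodman-Myhill} warns can happen when separation and extra principles interact). The delicate step is the uniformity of the $T$-proof: I must show that for \emph{arbitrary} substituted predicates $\sigma(P),\sigma(Q)$, the theory $T$ decides the equality, and this is exactly where only extensionality, empty set, and pairing may be used, since those are the only hypotheses available. I expect the resolution to hinge on the fact that extensionality forces $=$ to be interpreted rigidly as membership-equality, so that the substitution instances of a carefully chosen equality-principle become theorems of $T$ even though the abstract logical principle is intuitionistically invalid; pinning down that precise principle, and checking its refutation in a concrete Kripke model with equality, is where the real work lies.
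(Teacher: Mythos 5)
You had the right raw material in your first paragraph---$T \vdash \neg(\emptyset = \set{\emptyset})$---but the formula you actually commit to does not work, and the gap is in exactly the step you flag as delicate. For your candidate $A = \forall x\,\forall y\,\big((P(x)\wedge Q(y)) \rightarrow (x=y \vee \neg(x=y))\big)$, consider the substitution sending both $P$ and $Q$ to $\top$ (or to $x = x$). The instance is then $\forall x\,\forall y\,(x = y \vee \neg(x=y))$, global decidability of equality, which $T$ emphatically does not prove: it fails even in standard Kripke models of full $\IZF$, for instance a two-node model in which two sets are distinguishable at the root but become extensionally equal at the top---which is precisely the countermodel you yourself propose for refuting $A$ in $\IQC^=$. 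So your own $\IQC^=$-countermodel doubles as a counterexample to the claimed uniform $T$-provability. The intuition that ``extensionality forces $=$ to behave rigidly'' is not available here: extensionality makes particular equalities such as $x = \emptyset$ stable (it is equivalent to the negative formula $\forall y\,(y \notin x)$), but general equality between definable sets is neither decidable nor even $\neg\neg$-stable in these theories, and by \Cref{Theorem: Diaconescu-Goodman-Myhill} any principle strong enough to decide arbitrary definable equalities would be pushing the theory toward classical logic, not something derivable from extensionality, empty set, and pairing alone.

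The repair, which is the route the cited proof takes, is much simpler and uses the one structural feature you did not exploit: since $T$-substitutions in the language with equality are required to commute with $=$, any sentence whose \emph{only} predicate symbol is $=$ is its own unique substitution instance. By \Cref{Proposition: tautology characterisation} it is then a tautology of $T$ if and only if $T$ proves it outright. So it suffices to exhibit one pure-equality sentence provable in $T$ but not in $\IQC^=$, and $\exists x\,\exists y\,\neg(x = y)$ does the job: empty set and pairing give $\emptyset$ and $\set{\emptyset}$, and $\emptyset \in \set{\emptyset}$ together with $\emptyset \notin \emptyset$ yields $\neg(\emptyset = \set{\emptyset})$ by the Leibniz laws of equality, whereas the sentence fails in the one-element structure and hence is unprovable even in $\CQC^=$, a fortiori in $\IQC^=$. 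No decidability schema, no uniformity argument over substitutions, and no Kripke-model analysis on the logic side beyond the one-point model is needed; the entire content of the theorem is that equality, being non-substitutable, imports concrete set-theoretic facts (``there exist two distinct sets'') into the logic.
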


\begin{question}
    Given any set theory $T$, what is its first-order logic with equality $\QLogic^=(T)$?
\end{question}

The following results were obtained by combining realisability techniques with transfinite computability. The latter is a generalisation of classical computability by allowing machines to run for an infinite amount of time and/or use an infinite amount of space. For a thorough introduction, we refer the reader to Carl's book \cite{Carl2019}.

\begin{theorem}[Carl, Galeotti and Passmann {\cite{CarlGaleottiPassmann2020}}]
    The propositional admissible rules of $\IKP$ are exactly those of $\IPC$, i.e. ${\vsim_\IKP} = {\vsim_\IPC}$.
\end{theorem}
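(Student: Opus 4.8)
The plan is to apply \Cref{Theorem: Vissers rules admissibility}, which reduces the problem to two separate tasks: first, establishing that $\Logic(\IKP) = \IPC$, and second, showing that the Visser rules are admissible in $\IKP$. Since $\IKP$ contains the propositional principles of $\IPC$ trivially (it is based on intuitionistic logic), the content of the first task is the nontrivial inclusion $\Logic(\IKP) \subseteq \IPC$; this should follow from the earlier model-theoretic results (the theorem of Passmann on blended models, or the Iemhoff--Passmann results, give $\Logic(T) = \IPC$ for $T \subseteq \IKP^+$ or similar), so I would cite the relevant statement rather than reprove it. The real work is therefore in verifying admissibility of each Visser rule $V_n$ in $\IKP$.

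To show the Visser rules are admissible, I would use the realisability-plus-transfinite-computability machinery flagged in the sentence preceding the theorem. Concretely, suppose $\sigma$ is an $\IKP$-substitution with $\IKP \vdash \sigma$ of the premise of $V_n$, i.e. $\IKP \vdash \sigma\bigl((\bigwedge_{i=1}^n (A_i \rightarrow B_i) \rightarrow (A_{n+1} \vee A_{n+2})) \vee C\bigr)$. The goal is to produce some disjunct in the conclusion that $\IKP$ proves under $\sigma$. The standard strategy for Visser's rules over $\IPC$ (Iemhoff's work) is projective/extension-based: one exploits a form of the disjunction property together with a way of extracting, from a proof of an implication, information about which disjunct is forced. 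In the set-theoretic setting this is where a realisability interpretation built on transfinite machines enters: one shows that $\IKP$ has the relevant \emph{extension property} or \emph{admissibility of disjunction-and-existence} witnessed by transfinite-computable functions, so that from a realiser of the premise one computes a realiser of one of the conclusion's disjuncts. The key intermediate lemma I would isolate is that $\IKP$ validates the semantic condition making the Visser rules admissible — essentially that any realisability model of $\IKP$ can be suitably extended (mirroring extensibility), so that the disjunction feeding each $V_n$ is decided.

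The main obstacle I anticipate is precisely this extension/extraction step: verifying that the transfinite-computable realisability interpretation is sound for all axioms of $\IKP$ (extensionality, $\in$-induction, and especially bounded collection are the delicate ones) while simultaneously supporting the effective witness extraction needed to satisfy the premise-to-conclusion passage of $V_n$. Realisability soundness for $\in$-induction typically requires the underlying notion of computation to handle transfinite recursion, which is exactly why ordinal/infinite-time machines are invoked; and bounded collection must be realised uniformly. Once soundness is in place, deriving admissibility of $V_n$ amounts to checking that whenever the realised premise holds in the model, one of the finitely many conclusion disjuncts is realised — a case analysis driven by the disjunction property of the realisability interpretation. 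I would structure the proof as: (1) cite $\Logic(\IKP)=\IPC$; (2) set up the transfinite realisability model and prove it validates $\IKP$; (3) prove a disjunction/extension lemma for this model; (4) conclude admissibility of each $V_n$ from (3); (5) invoke \Cref{Theorem: Vissers rules admissibility} to obtain ${\vsim_{\IKP}} = {\vsim_{\IPC}}$.
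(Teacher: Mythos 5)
This theorem is stated in the paper as an imported result: the survey cites Carl, Galeotti and Passmann \cite{CarlGaleottiPassmann2020} and gives no proof of its own, only the remark that the result was ``obtained by combining realisability techniques with transfinite computability.'' Measured against that, your proposal is the natural reconstruction and is consistent both with the technique the paper attributes to the source and with the paper's own general schema: you correctly reduce via \Cref{Theorem: Vissers rules admissibility} to (a) $\Logic(\IKP)=\IPC$ and (b) admissibility of the rules $V_n$ in $\IKP$. Step (a) is indeed available off the shelf: $\IKP \subseteq \IZF$ and $\IKP \subseteq \IKP^+ + \MP + \AC$, so either Passmann's blended-model theorem or the Iemhoff--Passmann theorem quoted in the survey yields $\Logic(\IKP)=\IPC$. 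Two points deserve emphasis. First, you are right not to rest the argument on the disjunction property alone --- DP gives only the reduction from $V_n$ to $V'_n$ (as in \Cref{Lemma: T extensible implies Visser admissible}), while the substance of $V'_n$ requires a root-gluing/extension-style property; your ``extension/extraction'' lemma is exactly where the mathematical content lives, and your proposal defers it rather than proving it, so as written it is a correct plan rather than a complete proof. Second, your instinct that this must be done on the realisability side rather than by the paper's Kripke-model extension construction is sound for a concrete reason the paper itself flags: the blended extension of \Cref{Theorem: Extensible axioms and schemes} handles schemes whose witnesses are unique (replacement, exponentiation), but --- per the remark following \Cref{Corollary: admissible rules of IZFR CZFER ECST BCST} --- it is not known to handle collection-type schemes, and $\IKP$ contains bounded collection; hence \Cref{Theorem: Extensibility implies admissible rules} cannot simply be invoked for $\IKP$, and the transfinite-computable (OTM) realisability of the cited work is what substitutes for extensibility there. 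So: same approach as the cited source insofar as the survey describes it, with the decisive lemma (realisability soundness for $\in$-induction and bounded collection together with the gluing/extraction property) correctly identified but left open.
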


\begin{theorem}[Passmann {\cite{Passmann2022}}]
    The propositional admissible rules of $\CZF$ are exactly those of $\IPC$, i.e. ${\vsim_\CZF} = {\vsim_\IPC}$.
\end{theorem}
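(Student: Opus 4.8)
The plan is to apply \Cref{Theorem: Vissers rules admissibility}, which reduces the problem to establishing two facts about $\CZF$: first, that $\Logic(\CZF) = \IPC$, and second, that every Visser rule $V_n$ is admissible in $\CZF$. The first fact is already available from Passmann's blended-models result (the instance $\Logic(T) = \IPC$ for $T \subseteq \IZF$), so the genuine work lies in verifying the admissibility of the Visser rules. Thus the whole statement follows once we show: if $\CZF \vdash \sigma\big((\bigwedge_{i=1}^n (A_i \to B_i) \to (A_{n+1} \vee A_{n+2})) \vee C\big)$ for some $\CZF$-substitution $\sigma$, then $\CZF \vdash \sigma\big(\bigvee_{j=1}^{n+2}(\bigwedge_{i=1}^n (A_i \to B_i) \to A_j) \vee C\big)$.

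The approach to admissibility is the realisability method combined with transfinite computability announced in the paragraph preceding the theorem, mirroring the earlier \IKP\ result of Carl, Galeotti and Passmann. First I would fix a notion of realisability for $\CZF$ based on ordinal (transfinite) machines, and verify the soundness theorem: if $\CZF \vdash \phi$ then $\phi$ is realised, with the realiser extractable from the proof. The point of transfinite computability here is that $\CZF$ proves strong forms of collection, so the witnessing procedure for an existential or disjunctive statement may need to search through the set-theoretic universe rather than merely $\NN$; an ordinal machine supplies exactly such an unbounded-but-computable search. Second, I would establish that realisability for $\CZF$ validates the disjunction property and a suitable \emph{extended} term-existence property: from a realiser of a disjunction one computably reads off which disjunct is realised, and from a realiser of an implication $\bigwedge_i(A_i \to B_i) \to D$ one obtains a machine transforming realisers of the antecedent into realisers of $D$.

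With these tools in place, the core combinatorial step is to run the standard argument (due to Iemhoff and Visser) that turns the realiser of the premise of $V_n$ into a proof of one of the disjuncts of the conclusion. Given a realiser $e$ of $\sigma$ applied to the premise, one analyses the two cases coming from the outer disjunction with $C$: either $e$ realises $\sigma(C)$, in which case the last disjunct of the conclusion is realised and hence (by soundness run backwards, i.e. by the fact that realisability of a sentence of this shape reflects provability for the arithmetical core) we obtain $\CZF \vdash \sigma(C)$; or $e$ realises the implication $\bigwedge_i(\sigma A_i \to \sigma B_i) \to (\sigma A_{n+1} \vee \sigma A_{n+2})$, and then one feeds the identity realisers $A_i \to A_i$ through $e$ to extract, uniformly and computably, a realiser of some $\sigma A_j$ conditional on $\bigwedge_i(\sigma A_i \to \sigma B_i)$, yielding one of the first $n+2$ disjuncts. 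Persistence of the construction across the realisability universe is what guarantees the \emph{uniformity} needed to land a single disjunct rather than merely a disjunction.

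The main obstacle, as I see it, is the backward direction: passing from ``$\sigma$ of a disjunct is realised'' to ``$\CZF$ \emph{proves} $\sigma$ of that disjunct''. Realisability soundness gives only one direction (provability implies realisability), so one cannot in general conclude provability from realisability. The resolution---again following the \IKP\ and \CZF\ literature---is to use a \emph{self-realisability} or faithfulness property: one arranges the realisability interpretation so that for the relevant class of sentences (those in the image of a substitution into the Visser schema, which after unwinding are Boolean-over-$\CZF$-formula statements) realisability coincides with truth in a model of $\CZF$, and then uses the disjunction property of $\CZF$ itself to transfer from truth-in-the-model to provability. Making this coincidence precise for the full first-order $\CZF$-substituted formulas, rather than for arithmetical sentences where the classical Visser argument lives, is where the transfinite-computability machinery must be deployed carefully, and I expect it to be the technically heaviest part of the proof.
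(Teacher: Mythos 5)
Your overall skeleton---reduce via \Cref{Theorem: Vissers rules admissibility} to (a) $\Logic(\CZF)=\IPC$, which is indeed available from the blended-models theorem since $\CZF \subseteq \IZF$, and (b) admissibility of the Visser rules in $\CZF$, attacked with realisability over transfinite machines---matches the decomposition behind the cited result of Passmann, and you correctly identify transfinite computability as the engine. But your verification of (b) has a genuine gap, which you half-acknowledge yourself: the passage from ``some disjunct of the conclusion of $V'_n$ is \emph{realised}'' to ``$\CZF$ \emph{proves} that disjunct''. Realisability soundness is one-directional and there is no completeness theorem for realisability, and your proposed patch---a faithfulness property making realisability coincide with truth on the substituted sentences, then transferring to provability via the disjunction property---is not available for full $\mathcal{L}_\in$-substituted formulas: realisability interpretations validate set-theoretic principles that $\CZF$ does not prove (this is precisely why realisability yields independence results), so realised sentences need not be provable, and the disjunction property only moves \emph{provability} of a disjunction to provability of a disjunct, which is not what you hold in hand. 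There is a second concrete flaw in the extraction step: you propose to ``feed the identity realisers $A_i \to A_i$ through $e$'', but the antecedent of the implication realised by $e$ is $\bigwedge_i(\sigma A_i \to \sigma B_i)$, for which you possess no realiser at all; moreover each disjunct of the conclusion of $V'_n$ is an implication with that same unrealised antecedent, so there is nothing to feed in and no forward extraction to run.

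The proof behind the citation (Passmann, following Carl--Galeotti--Passmann for $\IKP$) runs the admissibility of $V'_n$ in the contrapositive, in exactly the shape of \Cref{Lemma: T extensible implies Visser admissible}: if each disjunct $\bigwedge_i(\sigma A_i\to\sigma B_i)\to\sigma A_j$ is unprovable, one takes rooted Kripke countermodels $M_j$ (these exist by the standard strong completeness of intuitionistic predicate logic for arbitrary theories---realisability plays no role here) and glues them below a \emph{new root whose forcing is defined by realisability relative to the upper Kripke model}. The technical heart is proving that $\CZF$ is sound at such a realisability root; that is, transfinite realisability is used to establish an \emph{extension property} for $\CZF$ that plain Kripke gluing is not known to have (whence the paper's open question whether $\CZF$ is extensible), not to extract disjuncts from a realiser of the premise. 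So: right reduction, right machinery, but the machinery is pointed in the wrong direction---what is needed is semantic gluing under a realisability root, not proof-theoretic extraction plus a realisability-to-provability transfer that does not exist.
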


\begin{theorem}[Passmann {\cite{Passmann2022}}]
    \label{Theorem: Passmann 2022 Main Result}
    Let $T \subseteq \CZF + \PowerSet + \AC$ be a set theory. The first-order logic of $T$ is intuitionistic first-order logic $\IQC$, $\QLogic(T) = \IQC$.
\end{theorem}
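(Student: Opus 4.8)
The plan is to prove the two inclusions $\IQC \subseteq \QLogic(T)$ and $\QLogic(T) \subseteq \IQC$ separately, and to reduce the whole statement to the single theory $S := \CZF + \PowerSet + \AC$. For the first inclusion: since $T$ is based on intuitionistic logic, every theorem of $\IQC$ remains a theorem of $\IQC$ under any substitution (substitutions commute with the connectives and quantifiers and $T$ is closed under the rules of $\IQC$), so $T \vdash \sigma(A)$ for all $\sigma$ whenever $\IQC \vdash A$, giving $\IQC \subseteq \QLogic(T)$ via \Cref{Proposition: tautology characterisation}. For the second inclusion I would exploit monotonicity: by the earlier observation that $T \subseteq S$ implies $\QLogic(T) \subseteq \QLogic(S)$, it suffices to prove $\QLogic(S) \subseteq \IQC$. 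Combining, $\IQC \subseteq \QLogic(T) \subseteq \QLogic(S)$, so once $\QLogic(S) \subseteq \IQC$ is established all of these coincide with $\IQC$.

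So the real work is to show $\QLogic(\CZF + \PowerSet + \AC) \subseteq \IQC$, equivalently: for every first-order $A$ with $\IQC \not\vdash A$ there is a substitution $\sigma$ with $S \not\vdash \sigma(A)$. First I would invoke completeness of $\IQC$ with respect to Kripke models to fix a rooted Kripke model $K = (W,\leq,\{D_w\}_{w \in W},\Vdash)$ with \emph{expanding} domains whose root $r$ refutes $A$, i.e. $r \not\Vdash A$. (Expanding, rather than constant, domains are essential, since constant-domain frames validate the non-intuitionistic constant-domain principle.) The target is a model $\mathcal{M}$ of $S$, sound for intuitionistic logic, together with a substitution $\sigma$ sending the predicate symbols to $\mathcal{L}_\in$-formulas, such that truth of $\sigma$-instances in $\mathcal{M}$ tracks forcing in $K$; then $r \not\Vdash A$ yields $\mathcal{M} \not\vDash \sigma(A)$ and hence $S \not\vdash \sigma(A)$ by soundness.

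For the construction I would combine a realisability interpretation with the frame $K$. Plain Kripke models with classical domains do not suffice, since (as noted in the text) they can already fail exponentiation, let alone power set; this is exactly where transfinite computability enters. The idea is to build, by $\in$-recursion, a realisability-style name universe in which the realisers are transfinite computations (e.g.\ ordinal Turing programs with ordinal parameters), indexed compatibly over the nodes of $K$, so that the set-theoretic axioms are witnessed uniformly while the frame $K$ governs the first-order combinatorics. Transfinite computability is strong enough to produce the witnesses demanded by $\PowerSet$ and strong collection, and the uniform, functional character of realisers validates $\AC$ (a realiser of a total relation is by construction a choice function), whereas bounded separation, pairing, union, extensionality and $\in$-induction are verified as in the standard realisability treatments of $\CZF$. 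I would then choose $\sigma$ so that each atomic $P_i$ maps to an $\mathcal{L}_\in$-formula encoding, through a distinguished parameter carrying a copy of the frame and its valuation, the behaviour of $P_i$ in $K$, and prove a forcing/truth lemma by induction on the complexity of $A$: satisfaction of $\sigma(B)$ at the image of a node $w$ is equivalent to $w \Vdash B$, for all subformulas $B$.

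The main obstacle I anticipate lies in the simultaneous control of two competing demands inside the truth lemma. On one side, validating $\PowerSet$ and $\AC$ forces the node structures to be rich, classically large; on the other, the logic of $\mathcal{M}$ must not rise above $\IQC$, so the model must not validate excluded middle, the constant-domain principle, or the intermediate principles that creep into $\IZF$ through full separation (the Friedman--Ščedrov phenomenon, \Cref{Theorem: Friedman-Scedrov}). The decisive point is that $\CZF + \PowerSet$ retains only \emph{bounded} separation: the sets definable from the substitution parameter at a node cannot be sharpened into the separation instances that, in the Diaconescu--Goodman--Myhill and Friedman--Ščedrov arguments, force extra logical laws. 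Verifying the quantifier steps of the truth lemma over the varying domains, and checking that the power-set and choice witnesses built by transfinite computation do not secretly decide atomic formulas across incomparable nodes, is the technical heart; once the forcing lemma holds for all subformulas of $A$, the conclusion $\mathcal{M} \not\vDash \sigma(A)$, and with it $\QLogic(S) \subseteq \IQC$, follows at once.
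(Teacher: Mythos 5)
You should know at the outset that this survey does not prove \Cref{Theorem: Passmann 2022 Main Result} at all: it is quoted from Passmann \cite{Passmann2022}, with only the one-line indication that the proof ``combines realisability techniques with transfinite computability.'' Measured against that, your outer scaffolding is right and matches how the result is actually organised: $\IQC \subseteq \QLogic(T)$ because $T$ is based on intuitionistic logic, monotonicity of $\QLogic$ reduces everything to the single theory $\CZF + \PowerSet + \AC$, and the refutation of a non-theorem $A$ is obtained from a semantic countermodel for $A$ together with a transfinite-computable realisability interpretation, a substitution coded from the countermodel, and a truth lemma. So as a reconstruction of the \emph{shape} of the argument, your proposal is on target.

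There are, however, two genuine gaps. First, you anchor the construction in Kripke completeness of $\IQC$, but Passmann's proof runs the logical side through \emph{Beth} semantics, not Kripke semantics, and this choice is load-bearing rather than cosmetic. First-order $\IQC$ lacks the finite model property, so your Kripke frame may be an arbitrary infinite rooted partial order, and there is no known way to realise the collection-type axioms of $\CZF$ node-by-node over such a frame: this is exactly the obstruction the survey itself flags in the Remark following \Cref{Corollary: admissible rules of IZFR CZFER ECST BCST} (collection, strong collection and subset collection lack \emph{unique} witnesses, which is why only $\IZFR$, $\CZFER$, $\ECST$ and $\BCST$ are shown extensible in \Cref{Theorem: IZFR CZFER ECST BCST extensible}, and why the Kripke models with classical domains of \cite{IemhoffPassmann2021} validate only $\IKP$-like fragments). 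Beth semantics localises disjunctive and existential commitments to bars of a fixed tree, which is what allows the machine-generated witnesses for strong collection and $\PowerSet$ to coexist with the truth lemma; it is also consistent with the survey's observation that this method does \emph{not} generalise to other intermediate logics, whereas a genuinely Kripke-based method (as in \cite{IemhoffPassmann2021}) would be expected to. Second, your validation of $\AC$ --- ``a realiser of a total relation is by construction a choice function'' --- is not an argument in an \emph{extensional} realisability universe: a realiser selects names, not extensional sets, and turning that into a choice function compatible with extensionality (while bounded separation blocks the Diaconescu and Friedman--\v{S}\v{c}edrov mechanisms, as you correctly note via \Cref{Theorem: Friedman-Scedrov}) is precisely the canonical-name machinery that constitutes the technical core of \cite{Passmann2022}. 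As it stands, your text names the decisive steps --- the quantifier cases of the truth lemma over varying domains, and the simultaneous realisation of $\PowerSet$, $\AC$ and strong collection --- as anticipated obstacles rather than discharging them, so it is a plausible research plan aligned with the cited method, not yet a proof.
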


It is crucial to note here that the proof method of \Cref{Theorem: Passmann 2022 Main Result} does not allow to prove the result  for any intermediate logic $J$. 

\begin{question}
    Can \Cref{Theorem: Passmann 2022 Main Result} be generalised to a class of intermediate first-order logics?
\end{question}

In general, all results surveyed in this section introduce assumptions on the logic $J$ under consideration. We may therefore ask the following question.

\begin{question}
    Is it possible to extend the results of this section to larger classes of logics? If not, find counterexamples for which these theorems fail if the assumptions on the logics $J$ are weakened.
\end{question}

\begin{table}
    {
    \footnotesize
    \centering
    \begin{tabular}{lllll}
        \toprule
        $T$ & 
        $\Logic(T) = \IPC$ & 
        $\QLogic(T) = \IQC$ & 
        ${\vsim_T} = {\vsim_\IPC}$ 
        \\
        \midrule
        $\IKP$ & 
        \cmark & 
        \cmark & \cmark  
        \\
        $\BCST$ & 
        \cmark & 
        \cmark & 
        \cmark (\Cref{Theorem: IZFR CZFER ECST BCST extensible})
        \\
        $\ECST$ & 
        \cmark & 
        \cmark & 
        \cmark (\Cref{Theorem: IZFR CZFER ECST BCST extensible})
        \\
        $\CZFER$ & 
        \cmark & 
        \cmark &
        \cmark (\Cref{Theorem: IZFR CZFER ECST BCST extensible})
        \\
        $\CZF$ & 
        \cmark & 
        \cmark & \cmark  
        \\
        $\CZF + \AC$ & \cmark 
        & \cmark 
        & \cmark 
        \\
        $\IZFR$ & 
        \cmark & 
        \xmark; $\IQC \subsetneq \QLogic(\IZFR) = {\textbf{?}} \subsetneq \CQC$ 
        & \cmark (\Cref{Theorem: IZFR CZFER ECST BCST extensible})
        \\
        $\IZF$ & 
        \cmark & 
        \xmark; $\IQC \subsetneq \QLogic(\IZF) = {\textbf{?}} \subsetneq \CQC$ &
        \xmark; ${\vsim_\IPC} \subsetneq {\vsim_\IZF} = {\textbf{?}}$
        \\
        $\IZF + \AC$ &
        \xmark; $\Logic(\IZF + \AC) = \CPC$ & 
        \xmark; $\CQC \subseteq \QLogic(\IZF + \AC)$ & 
        \xmark; ${\vsim_{\IZF + \AC}} = {\vsim_\CPC}$ 
        \\
        \bottomrule
    \end{tabular}
    }
    \caption{An overview of the most important intuitionistic and constructive set theories and whether they satisfy de Jongh's propositional and first-order theorems, and whether their admissible rules are exactly those of $\IPC$, as discussed in the survey. The question marks (``$\textbf{?}$'') indicate open problems. The results marked with a reference within this paper are new.}
    \label{Table: Overview}
\end{table}

\section{Logics and rules of set theories}
\label{Section: Logics and rules of set theories}

\subsection{Logic, rules, and the extension property}

In this section, we will prove a set-theoretic analogue of Visser's \cite[Lemma 4.1]{Visser1999} \textit{Main Lemma}. Given a Kripke frame $K$, we write $K^+$ for the frame extended with a new root. The construction of adding a new root to a Kripke model was first used by Smorynski \cite{Smorynski1973} for models of $\HA$ to give an alternative proof of de Jongh's theorem for $\HA$. In the arithmetical case, it suffices to equip the new root with the standard model of arithmetic. The case of set theory requires a more elaborate construction, as we will now see.

\begin{definition}
    \label{Definition: extensibility}
    Let $\Gamma$ be a set of sentences in the language of set theory. A set of sentences $\Delta$ is \emph{$\Gamma$-extensible} if for every Kripke model $M \Vdash \Gamma \cup \Delta$ of set theory with underlying frame $K$, there is a model $M^+$ based on $K^+$ such that $M^+ \upharpoonright K = M$ and $M^+ \Vdash \Delta$; $\Delta$ is extensible if it is $\emptyset$-extensible. 
\end{definition}

To avoid cumbersome notation, we will say that a sentence $\phi$ is ($\Gamma$-)extensible just in case $\set{\phi}$ is ($\Gamma$-)extensible. We will later need the following two brief observations.

\begin{lemma}
    \label{Lemma: Extensible 1}
    Let $\Gamma \subseteq \Delta$ be sets of formulas. If $\Delta$ is $\Gamma$-extensible, then $\Delta$ is extensible.
\end{lemma}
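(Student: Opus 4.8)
The plan is to unwind the two definitions and observe that, under the hypothesis $\Gamma \subseteq \Delta$, the premises of the two notions of extensibility coincide. To show that $\Delta$ is extensible, i.e. $\emptyset$-extensible, I would start with an arbitrary Kripke model $M \Vdash \Delta$ of set theory with underlying frame $K$, and I must produce a model $M^+$ based on $K^+$ with $M^+ \upharpoonright K = M$ and $M^+ \Vdash \Delta$.

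The key observation is that $\Gamma \cup \Delta = \Delta$, precisely because $\Gamma \subseteq \Delta$. Hence from $M \Vdash \Delta$ I immediately obtain $M \Vdash \Gamma \cup \Delta$. This is exactly the hypothesis required to invoke the $\Gamma$-extensibility of $\Delta$, which then hands me a model $M^+$ based on $K^+$ with $M^+ \upharpoonright K = M$ and $M^+ \Vdash \Delta$ — which is precisely what extensibility demands of $M$.

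Since no genuine model construction is needed, there is no real obstacle: the lemma follows by comparing the quantifier ranges in \Cref{Definition: extensibility}. Enlarging the side condition from $\emptyset$ to $\Gamma$ only ever \emph{restricts} the class of models one has to extend, so $\Gamma$-extensibility is a priori weaker than extensibility; but the inclusion $\Gamma \subseteq \Delta$ forces $\Gamma \cup \Delta = \Delta = \emptyset \cup \Delta$, collapsing both premises to the single condition $M \Vdash \Delta$. In fact this makes the two notions equivalent under $\Gamma \subseteq \Delta$, so proving the stated implication is really just checking that the set-theoretic identity $\Gamma \cup \Delta = \Delta$ holds.
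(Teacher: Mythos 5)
Your proof is correct and is essentially identical to the paper's: both simply observe that $\Gamma \subseteq \Delta$ gives $M \Vdash \Gamma \cup \Delta$ from $M \Vdash \Delta$, and then invoke $\Gamma$-extensibility to obtain the required $M^+$. Your additional remark that the two notions coincide under $\Gamma \subseteq \Delta$ is a correct (if unneeded) observation.
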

\begin{proof}
    Let $M \Vdash \Delta$. As $\Gamma \subseteq \Delta$, we have $M \Vdash \Delta \cup \Gamma$. By $\Gamma$-extensibility, it follows that $M^+ \Vdash \Delta$. 
\end{proof}
    
\begin{lemma}
    \label{Lemma: Extensible 2}
    Let $\Gamma \subseteq \Delta$ be sets of formulas. If a sentence $\phi$ is $\Gamma$-extensible, then it is $\Delta$-extensible.
\end{lemma}
\begin{proof}
    Let $M \Vdash \set{\phi} \cup \Delta$. As $\Gamma \subseteq \Delta$, we have $M \Vdash \set{\phi} \cup \Gamma$. By $\Gamma$-extensibility, $M^+ \Vdash \phi$.
\end{proof}

A Kripke frame $(K,\leq)$ is a \textit{finite splitting tree} if $K$ is finite, connected, and every $v \in K$ has either no successors or at least two immediate successors. A node $v \in K$ with no successors is also called a \textit{leaf}. An easy induction on the height of finite splitting trees allows to show that every node in such a tree is uniquely determined by the set of leafs above it. A set theory $T$ is called \textit{subclassical} if there is a classical model of $T$.

The following theorem follows the same idea as Smorynski's proof of de Jongh's Theorem for $\HA$ \cite{Smorynski1973}. For this reason, this proof method is also sometimes referred to as \textit{Smorynski's trick}. 

\begin{theorem}
    \label{Theorem: Extensibility De Jongh Property}
    Let $J$ be a propositional intermediate logic characterised by a class of finite splitting trees. If $T$ is a subclassical recursively enumerable extensible set theory, then $J$ is the propositional logic of $T(J)$, i.e. $\Logic(T(J)) = J$.
\end{theorem}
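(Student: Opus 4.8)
The plan is to prove both inclusions $\Logic(T(J)) \subseteq J$ and $J \subseteq \Logic(T(J))$, where the second is the easy direction and the first is where the real work lies. For the easy inclusion, since $J$ is an intermediate logic and $T(J)$ is by definition the closure of $T$ under $J$, every theorem of $J$ becomes a tautology of $T(J)$: given any propositional formula $A$ with $J \vdash A$ and any substitution $\sigma$, the formula $\sigma(A)$ is obtained from $A$ by substituting $T$-formulas for propositional letters, and closure under $J$ guarantees $T(J) \vdash \sigma(A)$. By \Cref{Proposition: tautology characterisation} this gives $A \in \Logic(T(J))$, so $J \subseteq \Logic(T(J))$.

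For the hard inclusion, I would argue contrapositively: suppose $A \notin J$ and show $A \notin \Logic(T(J))$, i.e.\ exhibit a substitution $\sigma$ with $T(J) \not\vdash \sigma(A)$. Since $J$ is characterised by a class of finite splitting trees, there is a finite splitting tree $(K,\leq)$ and a valuation refuting $A$ at the root. The core idea (Smorynski's trick) is to build a Kripke model $M$ of $T(J)$ whose underlying frame is $K$, in which the propositional behaviour at each node mirrors the refuting valuation, so that a suitably chosen $\sigma$ makes $\sigma(A)$ fail at the root. First I would place, at each leaf of $K$, a classical model of $T$ — available because $T$ is subclassical — and then work downward through the tree. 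At each internal node $v$, whose immediate successors already carry models of $T$, I invoke extensibility of $T$: adding a new root below the already-constructed submodels yields again a model of $T$. Because $T$ is recursively enumerable and the tree is finite, this bottom-up construction terminates and produces a Kripke model $M$ based on $K$ with $M \Vdash T$. The characterisation by finite splitting trees, together with the remark that each node is determined by the leaves above it, is what lets me encode the propositional valuation of $A$ by set-theoretic sentences: for each propositional letter $p$ I choose a $T$-sentence $\sigma(p)$ forced exactly at the nodes where $p$ holds in the refuting valuation, so that $\sigma(A)$ fails at the root of $M$.

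The main obstacle, and the step I expect to require the most care, is verifying that the constructed model $M$ actually forces $T(J)$ rather than merely $T$: forcing all the set-theoretic axioms of $T$ follows from the inductive application of extensibility, but one must separately check that $M$ validates the logic $J$. This is exactly where the hypothesis that $J$ is characterised by finite splitting trees and that the frame $K$ is such a tree is essential — the frame itself guarantees $M \Vdash J$ at the propositional level, hence $M \Vdash T(J)$. A second delicate point is the choice of the defining sentences $\sigma(p)$: I need $T$-formulas whose forcing pattern across $K$ can be prescribed at will, and producing these genuinely uses the set-theoretic expressive power of $T$ (as opposed to the arithmetical case, where equipping the root with the standard model sufficed, here the richer construction of \Cref{Definition: extensibility} is needed). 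Once $M \Vdash T(J)$ and $M$, at its root, refutes $\sigma(A)$, we conclude $T(J) \not\vdash \sigma(A)$, hence $A \notin \Logic(T(J))$, completing the nontrivial inclusion and therefore the proof that $\Logic(T(J)) = J$.
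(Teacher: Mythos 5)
Your overall architecture matches the paper's proof (Smorynski's trick): classical models at the leaves of a finite splitting tree refuting $A$, iterated extensibility to fill in the internal nodes, the frame itself guaranteeing $M \Vdash T(J)$, and a substitution mirroring the refuting valuation. But there is a genuine gap at exactly the point you flag and then wave past: the existence of set-theoretic sentences whose forcing pattern across $K$ ``can be prescribed at will.'' This does not come for free from ``the set-theoretic expressive power of $T$,'' and it fails outright for your construction as stated: you place \emph{arbitrary} classical models of $T$ at the leaves, and if these happen to be elementarily equivalent (e.g.\ the same model at every leaf), then no sentence can distinguish one leaf from another, so no substitution $\sigma$ can make $\sigma(p)$ hold exactly on $V(p)$. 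The paper resolves this with an iterated G\"odel-sentence construction: setting $\phi^{\seq{0}} = \Gamma_T$, $\phi^{\seq{1}} = \neg\Gamma_T$, and recursively $\phi^{s^\smallfrown\seq{0}} = \phi^s \wedge \Gamma_{T+\phi^s}$, $\phi^{s^\smallfrown\seq{1}} = \phi^s \wedge \neg\Gamma_{T+\phi^s}$, one obtains, for each $n$, consistent theories $T + \phi^s$ ($s \in 2^n$) that are pairwise jointly inconsistent, and then assigns to \emph{distinct} leaves models of \emph{distinct} $\phi^{s_l}$. This is also where the hypothesis that $T$ is recursively enumerable is actually used --- it licenses G\"odel's incompleteness theorem --- whereas in your write-up r.e.-ness is misattributed to ``termination of the bottom-up construction,'' which needs only finiteness of the tree.

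A second, smaller omission: even granted leaf-distinguishing sentences, defining $\sigma(p)$ requires care, because at an internal node $v$ the bare disjunction $\bigvee_{l \in E_v} \phi^{s_l}$ (where $E_v$ is the set of leaves above $v$) is \emph{not} forced --- forcing a disjunction requires forcing a disjunct, and no single $\phi^{s_l}$ holds at $v$. The paper therefore takes $\gamma_v := \neg\neg \bigvee_{l \in E_v} \phi^{s_l}$ and proves $M, w \Vdash \gamma_v$ iff $w \geq v$, using the splitting property to ensure nodes are determined by their leaf-sets; the substitution is then $\tau(p) = \bigvee_{v \in V(p)} \gamma_v$. Without the double negation, the claimed equivalence between $M, w \Vdash \sigma(p)$ and $w \in V(p)$ breaks at internal nodes. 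So your proposal is a correct skeleton, but the heart of the argument --- constructing the distinguishing sentences via incompleteness and converting them into cone-defining formulas --- is missing rather than merely deferred.
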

\begin{proof}
    Let $2^{<\omega}$ be the set of binary sequences of finite length, and $2^n$ be the set of binary sequences of length $n$. If $T$ is a recursively enumerable theory, let $\Gamma_T$ be its Gödel sentence. Let $\phi^{\seq{0}} := \Gamma_T$, and $\phi^{\seq{1}} := \neg \Gamma_T$. Clearly both $T + \phi^{\seq{0}}$ and $T + \phi^{\seq{1}}$ are consistent by Gödel's Incompleteness Theorem. By recursion on the length of $s \in 2^{<\omega}$, we define 
    \begin{align*}
        \phi^{s^\smallfrown{\seq{0}}} &:= \phi^s \wedge \Gamma_{T + \phi^s}, \text{ and,} \\
        \phi^{s^\smallfrown{\seq{1}}} &:= \phi^s \wedge \neg \Gamma_{T + \phi^s}.
    \end{align*}
    By inductively applying Gödel's Incompleteness Theorem, it follows that $T + \phi^s$ is consistent for every $s \in 2^{<\omega}$; so, for every $s \in 2^{<\omega}$, let $M_s$ be a classical model such that $M_s \vDash T + \phi^s$.
    
    We now observe that, given $s, t \in 2^{<\omega}$ of the same length with $s \neq t$, it must be that $\phi^s$ and $\phi^t$ are jointly inconsistent: Let $i$ be minimal such that $s(i) \neq t(i)$. Then $s \upharpoonright i = t \upharpoonright i$ and we can assume, without loss of generality, that $s(i) = 0$ and $t(i) = 1$. The sentences $\phi^s$ and $\phi^t$ are defined as conjunctions in such a way that $\phi^s$ contains the conjunct $\Gamma_{T + \phi^{s\upharpoonright i}}$, and $\phi^t$ contains the conjunct $\neg\Gamma_{T + \phi^{t\upharpoonright i}}$. Since $s \upharpoonright i = t \upharpoonright i$, it follows that $\phi^s \rightarrow \neg \phi^t$. We can conclude for $n < \omega$ and $s \in 2^n$ that
    $$
        M_s \vDash \phi^s \wedge \bigwedge_{t \in 2^n \setminus \set{s}} \neg \phi^t.
    $$
    
    Let $T$ be a set theory and $J$ a logic, as given in the statement of the theorem. To prove that $\Logic(T(J)) = J$, we will proceed as follows: Let $\mathcal{C}$ be a class of finite splitting trees that characterises the logic $J$. It is clear that $J \subseteq \Logic(T(J))$. To show that $\Logic(T(J)) \subseteq J$, we proceed by contraposition. So assume that $J \not \vdash A$ for some propositional formula $A$, then there is a finite splitting tree $(K,\leq) \in \mathcal{C}$ and a valuation $V$ on $K$ such that $(K,\leq,V) \Vdash J$ but $(K,\leq,V) \not \Vdash A$. On the basis of this propositional Kripke model, we will construct a Kripke model $M \Vdash T$ of set theory and a propositional translation $\tau$ such that $M \not \Vdash A^\tau$. As $M$ is based on the frame $(K,\leq) \Vdash J$, it follows that $M \Vdash T(J)$. Hence, it follows that $A \notin \Logic(T(J))$.
    
    As every finite splitting tree can be constructed from its set of leaves by iterating the operation of adding a new root, we can obtain a model of $T$ on the frame $(K,\leq)$ as follows. Find $n < \omega$ such that there are at least as many $s \in 2^n$ as there are leaves in $K$; let $l \mapsto s_l$ be an injective map assigning sequences to leaves. Assign the models $M_{s_l}$ to the leaves of $(K,\leq)$ and use the extensibility of $T$ to construct a Kripke model $M$ of $T$ with underlying frame $(K,\leq)$. 
    Given $v \in K$, let $E_v$ be the set of leaves $l \geq v$. Then consider the formula:
    $$
        \gamma_v := \neg \neg \bigvee_{l \in E_v} \phi^{s_l}.
    $$
    Now recall that every node $v$ in the finite splitting tree $(K,\leq)$ is characterised by the set $E_v$, and that $M_{s_l} \vDash \phi^{s_k}$ if and only if $l = k$. It follows that $M, w \Vdash \gamma_v$ if and only if $w \geq v$. 
    
    We are now ready to construct the translation $\tau$. Given a propositional letter $p$, let
    \begin{align*}
    \tau(p) :&= \bigvee_{v \in V(p)} \gamma_v = \bigvee_{v \in V(p)} \neg \neg \bigvee_{l \in E_v} \phi^{s_l}.
    \end{align*}
    Moreover, $\tau$ commutes with the propositional connectives $\neg$, $\rightarrow$, $\vee$, and $\wedge$. 
    
    To finish the proof of the theorem, it now suffices to show that $M, w \Vdash \tau(p)$ if and only if $w \in V(p)$; it then follows by induction that $M, w \Vdash A^\tau$ if and only if $(K,\leq,V), w \Vdash A$. So assume that $M, w \Vdash \tau(p)$. Equivalently, $M, w \Vdash \gamma_v$ for some $v \in V(p)$. We have seen that this is equivalent to $w \geq v$ for some $v \in V(p)$, which, by persistence, holds if and only if $w \in V(p)$.
\end{proof}

Note that $\IPC$ is characterised by the class of finite trees \cite[Theorem 6.12]{TroelstraVanDalen1988VI}. To show that $\IPC$ is characterised by the class of finite splitting tress, note that duplicating branches of a tree does not change the formulas satisfied at the root.

\begin{corollary}
    \label{Corollary: Extensibility implies De Jongh}
    Let $T$ be a subclassical recursively enumerable set theory. If $T$ is extensible, then the propositional logic of $T$ is intuitionistic propositional logic $\IPC$, i.e. $\Logic(T) = \IPC$.
\end{corollary}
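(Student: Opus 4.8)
The plan is to derive \Cref{Corollary: Extensibility implies De Jongh} as a direct instance of \Cref{Theorem: Extensibility De Jongh Property}.

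\medskip

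\noindent\textbf{Approach.} The corollary is the special case $J = \IPC$ of the preceding theorem, so the entire strategy is to verify that the hypotheses of \Cref{Theorem: Extensibility De Jongh Property} are met for this choice of $J$, and that the conclusion then specialises correctly. First I would recall the theorem's requirement on $J$: it must be a propositional intermediate logic characterised by a class of finite splitting trees. The remaining hypotheses --- that $T$ is subclassical, recursively enumerable, and extensible --- are exactly the standing assumptions of the corollary, so no work is needed there.

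\medskip

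\noindent\textbf{Key steps.} The only genuine verification is that $\IPC$ is characterised by a class of finite splitting trees. I would proceed in two stages. First, invoke the standard fact that $\IPC$ is characterised by the class of finite trees \cite[Theorem 6.12]{TroelstraVanDalen1988VI}. Second, upgrade this to finite \emph{splitting} trees: given an arbitrary finite tree, I would argue that one can duplicate branches to ensure every non-leaf node has at least two immediate successors, without altering which propositional formulas are forced at the root. The point is that if a node $v$ has a single immediate successor $w$, then the subtree above $v$ can be copied so that $v$ gains a second successor carrying an isomorphic valuation; since forcing at $v$ depends only on the multiset of formulas forced at its successors (and persistence), adding an isomorphic copy changes nothing about what $v$ forces. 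Iterating this bottom-up turns any finite tree into a finite splitting tree forcing exactly the same formulas at the root. Hence the class of finite splitting trees also characterises $\IPC$. Once this is established, \Cref{Theorem: Extensibility De Jongh Property} applies with $J = \IPC$ to yield $\Logic(T(\IPC)) = \IPC$, and since $T$ is already based on intuitionistic logic we have $T(\IPC) = T$, so $\Logic(T) = \IPC$.

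\medskip

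\noindent\textbf{Main obstacle.} There is no serious obstacle here, as the hard analytic work (the G\"odel-sentence construction and the extensibility argument) has already been carried out in the proof of \Cref{Theorem: Extensibility De Jongh Property}. The only mildly delicate point is the branch-duplication argument, which must be phrased so that the valuation on the duplicated branch is copied faithfully and the isomorphism of subtrees is genuinely forcing-preserving; this is a routine induction on tree height, and indeed the remark immediately preceding the corollary already records precisely this observation.
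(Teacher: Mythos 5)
Your proposal is correct and matches the paper's own argument exactly: the paper also derives the corollary as the instance $J=\IPC$ of \Cref{Theorem: Extensibility De Jongh Property}, citing \cite[Theorem 6.12]{TroelstraVanDalen1988VI} for characterisation by finite trees and noting that branch duplication (which preserves the formulas forced at the root) upgrades this to finite splitting trees. Your additional observation that $T(\IPC)=T$ since $T$ is based on intuitionistic logic is left implicit in the paper but is the right way to make the specialisation precise.
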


Let us first observe the following helpful fact. Recall that a theory $T$ has the disjunction property whenever $T \vdash \phi \vee \psi$ implies $T \vdash \phi$ or $T \vdash \psi$.

\begin{lemma}
    \label{Lemma: DP}
    If $T$ is an extensible set theory, then $T$ has the disjunction property.
\end{lemma}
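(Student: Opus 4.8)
The plan is to prove the contrapositive: assuming $T \not\vdash \phi$ and $T \not\vdash \psi$, I will build a single Kripke model of $T$ whose root refutes $\phi \vee \psi$, so that soundness gives $T \not\vdash \phi \vee \psi$.

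First I would invoke Kripke completeness for $T$ as a theory over intuitionistic logic in the language $\mathcal{L}_\in$: from $T \not\vdash \phi$ I obtain a Kripke model of $T$ together with a node refuting $\phi$, and by passing to the submodel generated by that node I may assume it is a rooted model $M_1 \Vdash T$ whose root $r_1$ satisfies $r_1 \not\Vdash \phi$ (the generated submodel is again a model of $T$, since every node still forces $T$). Symmetrically I obtain a rooted model $M_2 \Vdash T$ with root $r_2 \not\Vdash \psi$. Next I would form the disjoint union $N := M_1 \sqcup M_2$, whose underlying frame $K$ is the disjoint union of the two frames. Since forcing at a node depends only on the nodes above it, and the two components do not interact, every node of $N$ forces exactly what it forced in its original component; in particular every node of $N$ forces $T$, so $N$ is a Kripke model of $T$. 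Applying extensibility of $T$ (with $\Gamma = \emptyset$) to $N$ yields a model $N^+$ on the frame $K^+$ obtained by adjoining a new root $r$ below $K$, with $N^+ \upharpoonright K = N$ and $N^+ \Vdash T$.

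It then remains to check that $r \not\Vdash \phi \vee \psi$. By the Kripke semantics of disjunction, $r \Vdash \phi \vee \psi$ would force either $r \Vdash \phi$ or $r \Vdash \psi$. But $r$ lies below every node of both components, so by persistence $r \Vdash \phi$ would give $r_1 \Vdash \phi$, contradicting $r_1 \not\Vdash \phi$, while $r \Vdash \psi$ would give $r_2 \Vdash \psi$, contradicting $r_2 \not\Vdash \psi$. Hence the root of $N^+$ refutes $\phi \vee \psi$ although $N^+ \Vdash T$, and by soundness $T \not\vdash \phi \vee \psi$, as required.

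The main obstacle I anticipate is the legitimacy of the disjoint union as a Kripke model of set theory to which the extensibility hypothesis applies: one must verify both that the disjoint union of two models of set theory is again such a model (the domains and $\in$-structures of the two components combine without interference) and that \Cref{Definition: extensibility} is correctly read as quantifying over models on arbitrary—in particular disconnected—frames, so that $K^+$ means adjoining a single new root below both minimal nodes. Given the definition of extensibility as stated, both points are routine, but they are exactly where the set-theoretic content, as opposed to the purely propositional bookkeeping, enters.
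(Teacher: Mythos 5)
Your proposal is correct and follows essentially the same route as the paper's own proof: contraposition, taking countermodels for $\phi$ and $\psi$, forming their disjoint union, applying extensibility to add a new root forcing $T$, and using persistence to conclude the root refutes $\phi \vee \psi$. You merely spell out details the paper leaves implicit (Kripke completeness, passing to rooted generated submodels, and the legitimacy of the disjoint union), so there is nothing substantive to distinguish the two arguments.
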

\begin{proof}
    By contraposition. Assume that $T \not \vdash \phi$ and $T \not \vdash \psi$, then there are models $M_0$ and $M_1$ of $T$ such that $M_0 \not \Vdash \phi$ and $M_1 \not \Vdash \psi$. Let $M$ be the disjoint union of these models, then $M^+ \Vdash T$ as $T$ is extensible. Moreover, persistence implies that $M^+ \not \Vdash \phi \vee \psi$, hence $T \not \vdash \phi \vee \psi$.
\end{proof}

\begin{lemma}
    \label{Lemma: T extensible implies Visser admissible}
    If $T$ is an extensible set theory, then Visser's rules are admissible in $T$.
\end{lemma}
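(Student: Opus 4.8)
The plan is to prove, for each $n$, that the rule $V_n$ is admissible in $T$, arguing by contraposition from the disjunction property (\Cref{Lemma: DP}) together with the extensibility of $T$. Fix a substitution $\sigma$ and abbreviate $a_i := \sigma(A_i)$, $b_i := \sigma(B_i)$, $c := \sigma(C)$, and $\Phi := \bigwedge_{i=1}^n (a_i \to b_i)$. By the definition of admissibility I must show that $T \vdash (\Phi \to (a_{n+1} \vee a_{n+2})) \vee c$ implies $T \vdash \bigvee_{j=1}^{n+2}(\Phi \to a_j) \vee c$. Instead I would assume the conclusion fails, i.e. $T \not\vdash \bigvee_{j=1}^{n+2}(\Phi \to a_j) \vee c$, and deduce that the premise fails. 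Since each disjunct provably implies the whole disjunction, this assumption immediately yields $T \not\vdash \Phi \to a_j$ for every $j \in \set{1, \dots, n+2}$, as well as $T \not\vdash c$.

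The core of the argument is to build a single model of $T$ refuting $\Phi \to (a_{n+1} \vee a_{n+2})$. For each $j \in \set{1, \dots, n+2}$, the failure $T \not\vdash \Phi \to a_j$ yields a model of $T$ with a node forcing $\Phi$ but not $a_j$; passing to the submodel generated by that node (which is again a model of $T$), I obtain a model $M_j$ whose root $r_j$ satisfies $r_j \Vdash \Phi$ and $r_j \not\Vdash a_j$. As $\Phi$ is forced at $r_j$, persistence makes $\Phi$ hold at every node of $M_j$. I then form the disjoint union $M := \bigsqcup_{j=1}^{n+2} M_j$, which is a model of $T$ (forcing at each node is computed within its own component), and apply extensibility to adjoin a new root $r$, obtaining $M^+ \Vdash T$ with $r$ below every $r_j$.

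It then remains to verify that $r \Vdash \Phi$ while $r \not\Vdash a_{n+1} \vee a_{n+2}$. For the latter, since $r_{n+1} \geq r$ with $r_{n+1} \not\Vdash a_{n+1}$ and $r_{n+2} \geq r$ with $r_{n+2} \not\Vdash a_{n+2}$, persistence forces $r \not\Vdash a_{n+1}$ and $r \not\Vdash a_{n+2}$, whence $r \not\Vdash a_{n+1} \vee a_{n+2}$. For $r \Vdash \Phi$, I check $r \Vdash a_i \to b_i$ for each $i \leq n$ by inspecting every node $v \geq r$: if $v = r$, then $r \not\Vdash a_i$ (again by persistence from $r_i \not\Vdash a_i$), so the conditional holds vacuously at the root; if $v > r$, then $v$ lies in some $M_j$ and satisfies $a_i \to b_i$ because $M_j$ forces $\Phi$ at all its nodes. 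Hence $r \Vdash \Phi$, and together with $r \not\Vdash a_{n+1} \vee a_{n+2}$ this gives $r \not\Vdash \Phi \to (a_{n+1} \vee a_{n+2})$, so $T \not\vdash \Phi \to (a_{n+1} \vee a_{n+2})$. Combining this with $T \not\vdash c$ through the disjunction property (\Cref{Lemma: DP}) yields $T \not\vdash (\Phi \to (a_{n+1} \vee a_{n+2})) \vee c$, which is precisely the negation of the premise, completing the contrapositive.

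The main obstacle—and the reason all $n+2$ auxiliary models are needed rather than just the two associated with $a_{n+1}$ and $a_{n+2}$—is controlling the forcing of $\Phi$ at the freshly added root, over which extensibility gives no direct handle. This is resolved by the observation that placing $M_1, \dots, M_{n+2}$ below $r$ forces $r$ to refute every $a_i$, so that each conditional $a_i \to b_i$ is satisfied vacuously at $r$ itself, while persistence disposes of all nodes strictly above $r$.
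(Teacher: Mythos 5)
Your proposal is correct and takes essentially the same route as the paper's proof: contraposition, Kripke completeness giving rooted models whose roots force $\bigwedge_{i=1}^n(\sigma A_i \to \sigma B_i)$ but not $\sigma A_j$ (your explicit passage to generated submodels is just the paper's ``without loss of generality''), a disjoint union extended by a new root via extensibility, and persistence to check the root. The only organisational difference is that the paper applies the disjunction property up front, reducing $V_n$ to the $C$-free rules $V'_n$, whereas you invoke it at the very end to reassemble the disjunct $\sigma(C)$; the two orderings are interchangeable.
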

\begin{proof}
    By \Cref{Lemma: DP}, it is sufficient to show that the following rules $V'_n$ are admissible:
    \begin{equation*}
        \frac{\bigwedge_{i = 1}^n (A_i \rightarrow B_i) \rightarrow (A_{n+1} \vee A_{n+2})}{\bigvee_{j=1}^{n+2}(\bigwedge_{i=1}^n (A_i \rightarrow B_i) \rightarrow A_j)}
        \tag{$V'_n$}
    \end{equation*}
    The admissibility of these rules is a standard argument and proceeds as follows by contraposition. For greater readability we write $\sigma A$ for $\sigma(A)$ in this proof. 
    
    Let $\sigma: \Prop \to \mathcal{L}_\in^\sent$ be any substitution and assume that $T \not \vdash \bigvee_{j=1}^{n+2}(\bigwedge_{i=1}^n (\sigma A_i \rightarrow \sigma B_i) \rightarrow \sigma A_j)$, i.e., $T \not \vdash \bigwedge_{i=1}^n (\sigma A_i  \rightarrow \sigma B_i ) \rightarrow \sigma A_j $ for $j = 1, \dots, n+2$. By Kripke-completeness, let $M_j$ be a Kripke model of $T$ with root $r_j$ such that $M_j \not \Vdash \bigwedge_{i=1}^n (\sigma A_i  \rightarrow \sigma B_i ) \rightarrow \sigma A_j $. In this situation, we can assume without loss of generality that $M_j, r_j \Vdash \bigwedge_{i=1}^n (\sigma A_i  \rightarrow \sigma B_i )$, but $M_j, r_j \not \Vdash \sigma A_j $. 
    
    Now, let $M$ be the disjoint union $\Seq{M_j}{j = 1, \dots, n+2}$ of the models. As $T$ is extensible, consider a model $M^+$ extending $M$ with a new root $r$ such that $M^+ \Vdash T$. By persistence, $r \not \Vdash \sigma A_j $ for all $j = 1, \dots, n+2$. Hence, $r \Vdash \sigma A_i  \rightarrow \sigma B_i $ for all $i = 1, \dots, n$, but $r \not \Vdash \sigma A_{n+1}  \vee \sigma A_{n+2} $. Hence, $T \not \vdash \bigwedge_{i = 1}^n (\sigma A_i  \rightarrow \sigma B_i ) \rightarrow (A_{n+1}  \vee \sigma A_{n+2} )$.
    
    We may conclude that the rule $V'_n$ is admissible in $T$.
\end{proof}

The following theorem is a direct consequence of the previous \Cref{Corollary: Extensibility implies De Jongh}, \Cref{Lemma: T extensible implies Visser admissible} and \Cref{Theorem: Vissers rules admissibility}.

\begin{theorem}
    \label{Theorem: Extensibility implies admissible rules}
    Let $T$ be a subclassical recursively enumerable set theory. If $T$ is extensible, then the admissible rules of $T$ are exactly those of $\IPC$, i.e. ${\vsim_T} = {\vsim_\IPC}$. \qed
\end{theorem}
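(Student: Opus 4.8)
The plan is to obtain the statement by composing the three results that immediately precede it, since between them they supply exactly the two hypotheses needed to characterise the admissible rules of $T$ as those of $\IPC$.

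First I would establish the two premises of \Cref{Theorem: Vissers rules admissibility}. The premise $\Logic(T) = \IPC$ is precisely the content of \Cref{Corollary: Extensibility implies De Jongh}, whose hypotheses---that $T$ be subclassical, recursively enumerable, and extensible---are exactly those assumed here, so this step is a direct citation with no extra work. The second premise, that the Visser rules are admissible in $T$, follows at once from \Cref{Lemma: T extensible implies Visser admissible}, which needs only extensibility of $T$, again one of our standing assumptions. With both facts in hand, \Cref{Theorem: Vissers rules admissibility} applies and yields its conclusion ${\vsim_T} = {\vsim_\IPC}$ verbatim, which is exactly what we want.

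As for the main obstacle: at the level of this theorem there is essentially none, since all of the substance has been displaced into the preceding results. If one wishes to locate the genuine difficulty, it resides in \Cref{Corollary: Extensibility implies De Jongh}, where the de Jongh property is extracted from extensibility via a Smorynski-style construction that feeds Gödel sentences of $T$ into the leaves of finite splitting trees, and in \Cref{Theorem: Vissers rules admissibility}, where $\IPC$-admissibility is reduced to derivability in $\IPC$ extended by the Visser rules through Iemhoff's characterisation and then transferred rule-by-rule along the proof tree. The present theorem is merely the bookkeeping that glues these together, so the only thing to verify is that the hypotheses line up, which they do.
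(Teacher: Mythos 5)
Your proposal is correct and matches the paper exactly: the paper states this theorem as ``a direct consequence of the previous \Cref{Corollary: Extensibility implies De Jongh}, \Cref{Lemma: T extensible implies Visser admissible} and \Cref{Theorem: Vissers rules admissibility}'', which is precisely your composition. Your observation that all the substance lives in the cited results is also accurate---the paper offers no further proof beyond this gluing step.
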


Finally, the following corollary is immediate from \Cref{Theorem: Extensibility De Jongh Property}.

\begin{corollary}
    Let $T$ be a subclassical recursively enumerable extensible set theory, and $\phi$ be a sentence in the language of set theory. If $\phi$ is $T$-extensible, then $\Logic(T + \phi) = \IPC$.
\end{corollary}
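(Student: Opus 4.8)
The plan is to deduce the corollary from \Cref{Corollary: Extensibility implies De Jongh} --- equivalently, from \Cref{Theorem: Extensibility De Jongh Property} with $J = \IPC$, recalling that $\IPC$ is characterised by the class of finite splitting trees --- applied to the theory $T + \phi$ in place of $T$. Everything therefore reduces to verifying that $T + \phi$ is again subclassical, recursively enumerable, and extensible; granting this, the cited result yields $\Logic(T + \phi) = \IPC$ at once.

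Recursive enumerability is immediate, as $T + \phi$ arises from the recursively enumerable $T$ by adjoining a single sentence. The extensibility of $T + \phi$ is the step at which the hypothesis that $\phi$ is $T$-extensible enters. Here I would take an arbitrary Kripke model $M \Vdash T + \phi$ with underlying frame $K$ and adjoin a new root to form $K^+$. Since $M \Vdash T$ and $T$ is extensible, the root-extension $M^+$ over $K^+$ with $M^+ \upharpoonright K = M$ satisfies $M^+ \Vdash T$; since $M \Vdash T \cup \set{\phi}$ and $\phi$ is $T$-extensible, that same root-extension satisfies $M^+ \Vdash \phi$. Hence $M^+ \Vdash T + \phi$, which is precisely the extensibility of $T + \phi$. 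One should check here that a single root-extension $M^+$ can be taken to witness both facts at once, which is unproblematic since both concern adjoining a root to the very same model $M$, and $T$-extensibility of $\phi$ applies to $M$ because $M \Vdash T \cup \set{\phi}$.

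Subclassicality is the condition that must be verified with the most care, since it does not follow merely from the shape of the other two hypotheses: it requires a classical model of $T + \phi$, which is needed to seed the leaves of the splitting tree in \Cref{Theorem: Extensibility De Jongh Property} (the root is forced to satisfy $\phi$, and persistence then carries $\phi$ up to the leaves). I would establish it using the subclassicality of $T$ together with the observation that, at a terminal node of a Kripke model of $T + \phi$, forcing coincides with classical satisfaction --- so that a terminal node of such a model is itself a classical model of $T + \phi$.

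The main obstacle I anticipate lies exactly in this last point: ensuring that subclassicality is preserved when passing from $T$ to $T + \phi$, rather than only the formal transfer of recursive enumerability and extensibility. Once the three properties of $T + \phi$ are in place, the application of \Cref{Corollary: Extensibility implies De Jongh} is purely mechanical.
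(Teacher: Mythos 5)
Your overall plan is exactly the paper's: the paper obtains this corollary in one line from \Cref{Theorem: Extensibility De Jongh Property} applied to $T+\phi$ with $J=\IPC$, and your reduction to checking that $T+\phi$ is recursively enumerable, extensible and subclassical is the intended reading. One remark before the main problem: for extensibility of $T+\phi$ you need a \emph{single} root-extension $M^+$ with $M^+ \Vdash T$ and $M^+ \Vdash \phi$, and under the literal existential wording of \Cref{Definition: extensibility} the witness provided by the $T$-extensibility of $\phi$ need not itself model $T$; ``both concern adjoining a root to the very same $M$'' does not by itself merge the two witnesses. The paper tacitly reads all its extensibility claims as witnessed by one and the same extension (this is also what makes the ``combination'' yielding \Cref{Theorem: IZFR CZFER ECST BCST extensible} legitimate, the common witness there being the canonical construction of \Cref{Theorem: Extensible axioms and schemes}), so on the paper's conventions your step is acceptable, but it needs that convention rather than a hand-wave.

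The genuine gap is your verification of subclassicality of $T+\phi$, and it fails on two counts. First, the hypotheses do not supply \emph{any} model of $T+\phi$: consistency of $T+\phi$ is not assumed, and indeed it cannot be derived --- $\phi=\bot$ is vacuously $T$-extensible (no Kripke model forces $\bot$), yet $\Logic(T+\bot)$ consists of all propositional formulas, so subclassicality of $T+\phi$ is a genuinely additional assumption, not a consequence of the stated ones. Second, even granted a Kripke model of $T+\phi$, your extraction of a classical model from ``a terminal node'' is unavailable: Kripke models need not have maximal nodes (consider models over the frame $(\omega,\le)$), and there is no general passage from a Kripke model to a classical model --- an intuitionistically consistent theory can be classically inconsistent, which is precisely why the theorem carries subclassicality as a separate hypothesis. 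So you correctly isolated the one delicate hypothesis (and your observation that persistence forces the leaf models in the proof of \Cref{Theorem: Extensibility De Jongh Property} to satisfy $\phi$, so that classical models of $T$ alone do not suffice, is right), but your proposed proof of it is wrong; the corollary should instead be read with subclassicality of $T+\phi$ as a standing assumption --- harmless in the paper's intended application, where $T+\phi = \IZFR+\AC$ visibly has classical models.
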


In conclusion, the axiom of choice, $\AC$, is not $\IZFR$-extensible.

\begin{question}
    Is the axiom of choice $\CZF$-extensible? 
\end{question}

\subsection{Extensible set theories}

In this section, we assume the existence of a proper class of inaccessible cardinals (see \Cref{Remark: Alternatives for inaccessibles}). We will prove the extension property for a variety of constructive set theories by providing one particular construction for extending a given model. This construction is an adaptation of Passmann's \cite{Passmann2020} blended models. A \textit{Kripke model for set theory} is a first-order Kripke model of the form $(K, \leq, \set{D_v}_{v \in K},\set{f_{vw}}_{v \leq w \in K},\set{E_v}_{v \in K})$, where $\set{D_v}_{v \in K}$ is a collections of domains, $\set{f_{vw}}_{v \leq w \in K}$ a collection of transition functions between domains, and $\set{E_v}_{v \in K}$ a collection of interpretations of the `$\in$'-predicate. Note that the $D_v$ are assumed to be sets. Of course, the transition functions and interpretations satisfy the usual coherence conditions (for more details see, for example, \cite{IemhoffPassmann2021}).

\begin{definition}
    Let $M = (K, \leq, \set{D_v}_{v \in K},\set{f_{vw}}_{v \leq w \in K},\set{E_v}_{v \in K})$ be a Kripke model of set theory. The \textit{extended model} $M^+ = (K^+,\leq,\set{D_v}_{v \in K^+},\set{f_{vw}}_{v \leq w \in K^+},\set{E_v}_{v \in K^+})$ is defined as follows:
    \begin{enumerate}
        \item Let $r \notin K$, the so-called \textit{new root}. Then extend $K$ and $\leq$ as follows: 
        $$K^+ = K \cup \set{r} \text{ and } r \leq v \text{ for all } v \in K^+$$
        \item The domains $D_v$ for $v \in K$ are already given. The domain $D_r$ at the new root is defined inductively as follows:
      \begin{enumerate}
         \item $D_r^0 = \emptyset$,
         \item $D_r^\alpha$ consists of functions $x$ with $\dom(x) = K^+$ such that:
          \begin{enumerate}
                \item $x(r) \subseteq \bigcup_{\beta < \alpha} D_r^\beta$,
                \item $x(v) \in D_v$ for all $v \in K$,
                \item if $y \in x(r)$, then $y(v) E_v x(v)$ for all $v \in K$,
                \item if $v, w \in K$ with $v \leq w $, then $x(w) = f_{vw}(x(v))$.
            \end{enumerate}
            \item $D_r = \bigcup_{\alpha \in \kappa} D_r^\alpha$, where $\kappa$ is the least inaccessible cardinal $\kappa > |K| + \sum_{v \in K}|D_v|$.
        \end{enumerate}
        \item The membership relation $E_v$ at $v \in K$ are already defined; at the new root $r$, define by $y E_r x$ if and only if $y \in x(r)$
        \item The transition functions $f_{vw}:D_v \to D_w$ are already defined for $v, w \in K$ with $v \leq w$. Given $v \in K$, define $f_{rv}: D_r \to D_v$ by $f_{rv}(x) = x(v)$.
    \end{enumerate}
\end{definition}

To help the reader digest this construction, we will give a simple example and provide some general intuition for the construction. Let $M$ be any classical model for set theory. In other words, $M$ is a one point Kripke model; call its single point $v$. The extended model $M^+$ has then a new root $r$. An element of the root $D_r$---a set at the root---is a function $x$ with domain $\set{r,v}$ such that $x(r) \in D_r$ and $x(v) \in D_v$. Moreover, if $y \in x(r)$, then $y(v) \in x(v)$. Intuitively, a set $x$ at node $r$ may thus already contain some elements at the root $r$ but may also collect new elements when transitioning to $v$. Another way to think of this construction idea is as adding a new root whose elements are approximations of the sets already existing in the model we are starting from.

The next step is to observe that many set-theoretical axioms are extensible. For convenience we abbreviate the axioms of extensionality, empty set, pairing and union with $\mathsf{Ext}$, $\mathsf{Emp}$, $\mathsf{Pair}$, and $\mathsf{Un}$, respectively.

\begin{theorem}
    \label{Theorem: Extensible axioms and schemes}
    The following axioms and axiom schemes are ${\mathsf{Ext}}$-extensible: extensionality, empty set, pairing, union, $\in$-induction, separation and $\Delta_0$-separation, power set, replacement, and exponentiation. Moreover, the axiom of strong infinity is $\set{\mathsf{Ext},\mathsf{Emp},\mathsf{Pair},\mathsf{Un}}$-extensible.
\end{theorem}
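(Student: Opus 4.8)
The plan is to fix, for each axiom $\phi$ in the list, a model $M \Vdash \Gamma \cup \{\phi\}$ (where $\Gamma$ is $\mathsf{Ext}$, resp.\ $\{\mathsf{Ext},\mathsf{Emp},\mathsf{Pair},\mathsf{Un}\}$), form $M^+$ by the construction above, and verify $M^+, r \Vdash \phi$ at the new root $r$; forcing at the old nodes $v \in K$ is inherited, since $M^+ \upharpoonright K = M$ and the truth of a sentence at $v$ depends only on the subframe generated by $v$, which is unchanged because $r$ lies below everything. Before treating individual axioms I would isolate two bookkeeping facts. First, membership at the root reduces to the set-theoretic data: $M^+, r \Vdash z \in x$ iff $z \in x(r)$, and—using clause (iii) of the construction—this membership persists correctly to each $v \in K$ via $f_{rv}(z)=z(v)$ and $f_{rv}(x)=x(v)$. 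Second, $D_r$ carries a rank function given by the least $\alpha$ with $x \in D_r^\alpha$, and clause (i) guarantees that every $\in_r$-member of $x$ has strictly smaller rank. I would also record how equality at $r$ decomposes into equality of the root-component $x(r)$ together with equality of all shadows $x(v)$; this is where $M \Vdash \mathsf{Ext}$ is used, to pass between extensional sameness at a node and literal identity of shadows.

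With these in hand the finitary axioms are direct: for empty set, pairing and union I build the witness $w \in D_r$ by prescribing its root-component $w(r)$ (the empty set, the pair $\{x,y\}$, or $\bigcup_{u \in x(r)} u(r)$) and setting each shadow $w(v)$ to be the corresponding witness supplied by $M$ at $v$. Checking $w \in D_r$ amounts to verifying clauses (i)--(iv): clause (iii) follows because the relevant memberships already hold at every node, and the coherence clause (iv) follows from uniqueness of these witnesses in $M$ (extensionality) together with persistence of forcing, which pins $f_{vw}(w(v))$ as the witness at $w$. Extensionality at $r$ is handled dually: from ``$x$ and $y$ have the same elements'' forced at $r$ I read off $x(r)=y(r)$ and, via $\mathsf{Ext}$ in $M$, $x(v)=y(v)$ for all $v$, hence $x=y$. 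For $\in$-induction I would run a transfinite induction on the root-rank: assuming the inductive premise is forced at $r$, I show $M^+, r \Vdash \phi(x)$ for every $x \in D_r$ by noting that the $\in_r$-predecessors of $x$ are exactly the members of $x(r)$, all of strictly smaller rank, while $\phi$ already holds throughout $K$ by $\in$-induction in $M$.

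Separation, replacement, power set and exponentiation all follow one template: define $w$ by a root-component $w(r)$ cut out by the appropriate condition (e.g.\ $w(r)=\{z \in x(r) : M^+, r \Vdash \phi(z)\}$ for separation) and shadows $w(v)$ given by the corresponding set in $M$, then verify (i)--(iv), with coherence again coming from uniqueness-plus-persistence and clause (iii) from the fact that the defining condition persists from $r$ down to each $v$. Separation and $\Delta_0$-separation are identical apart from the complexity of $\phi$. The one genuine obstacle, and the place where the standing assumption that $\kappa$ is inaccessible is indispensable, is clause (i): I must check that the constructed $w(r)$ lands in some stage $D_r^{\beta}$ with $\beta < \kappa$. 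For separation and power set this is automatic, since subsets and separated sets have root-components already contained in a stage reached by $x$; but for replacement I need the regularity of $\kappa$ to bound the supremum of the ranks of the images of a set of size below $\kappa$, and for exponentiation I need the strong-limit property to bound the number of function-elements—both being exactly the content of inaccessibility.

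Finally, for strong infinity I would use the prerequisites $\mathsf{Emp},\mathsf{Pair},\mathsf{Un}$ to realise the finite ordinals at the root, set $I(r)$ to be the collection of these representatives and $I(v)$ to be $\omega$ as computed in $M$, and then verify that $I$ is forced to be inductive and minimal; its root-rank is $\omega < \kappa$, so $I \in D_r$. The overall difficulty is concentrated in verifying the coherence conditions (iii)--(iv) uniformly—where extensionality is the workhorse—and in the stage-boundedness of clause (i) for the size-increasing axioms, which is precisely where inaccessibility is consumed.
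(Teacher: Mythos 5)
Your proposal is correct and follows essentially the same route as the paper: for each axiom you define the witness by prescribing its root-component $w(r)$ and taking the (extensionality-unique) witnesses in $M$ as shadows $w(v)$, verify clauses (i)--(iv) via uniqueness-plus-persistence, track root-ranks for clause (i), invoke inaccessibility of $\kappa$ for replacement, and build strong infinity from the finite ordinal representatives exactly as the paper does. The only cosmetic differences are that you handle $\in$-induction by transfinite induction on root-rank where the paper derives a contradiction from an infinite descending $\in$-chain (the same wellfoundedness of the $D_r^\alpha$ hierarchy), and your appeal to the strong-limit property for exponentiation is slightly more than needed, since extensional uniqueness of pairs already bounds the ranks of the functions in a fixed stage.
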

\begin{proof}
    We will prove all statements of the theorem by assuming that a model $M$ satisfies the relevant axiom or scheme and then show that the extended model $M^+$ satisfies them as well. 
    
    For extensionality, let $x, y \in D_r$. For the non-trivial direction, assume that $r \Vdash \forall z (z \in x \leftrightarrow z \in y)$. By persistence and extensionality in $M$, we have $v \Vdash x = y$ for every $v \in M$; hence, $x(v) = y(v)$ for all $v \in M$. To see that also $x(r) = y(r)$, observe that $z \in x(r)$ if and only if $z E_r x(r)$. By assumption, the latter is equivalent to $z E_r y(r)$, which holds if and only if $z \in y(r)$. In conclusion, $y(r) = x(r)$.
    
    For the empty set axiom, let $e_v$ be the unique (by extensionality) witness for the empty set axiom at $v \in K$. Define a function $e$ with domain $K^+$ such that $e(v) = e_v$ for $e \in K$ and $e(r) = \emptyset$; by uniqueness $e$ is defined and it follows that $e \in D_r$. To see that $e$ witnesses the empty set axiom, let $x \in D_r$. We have to show that $r \Vdash \neg x \in e$. For this, it suffices to show that for all $v \geq r$, $v \not \Vdash x \in e$ but this is trivially true as $e(r)$ is empty and $e_v$ is the empty set for all $v \in K$.
    
    For the pairing axiom, let $x, y \in D_r$. By pairing and extensionality in $M$, there is a unique $p_v \in D_v$ such that $v \Vdash \forall z (z \in p_v \leftrightarrow z = x(v) \vee z = y(v))$ for all $v \in K$. Define $p$ to be the function with domain $K^+$ such that $p(r) = \set{x,y}$ and $p(v) = p_v$ for all $v \in K$. Clearly, $p \in D_r$ is defined by uniqueness of the $p_v$. To see that $p$ indeed witnesses the pairing axiom, observe that, clearly, $r \Vdash x \in p$ and $r \Vdash y \in p$. Moreover, if $r \Vdash z \in p$, then it follows by definition of $p$ that $r \Vdash z = x \vee z = y$.

    For the union axiom, let $x \in D_r$. As before, by extensionality and the union axiom in $M$, we can find a unique witness $u_v$ such that $v \Vdash u_v = \bigcup x(v)$ for every $v \in K$. Then define a function $u$ with domain $K^+$ such that $u(v) = u_v$ for all $v \in K$ and $u(r) = \bigcup \Set{y(r)}{y \in x(r)}$. To verify that indeed $u \in D_r$, note that $y \in u(r)$ implies that there is some $z \in x(r)$ such that $y \in z(r)$. Now by $y, z \in D_r$, we know that $v \Vdash y(v) \in z(v) \wedge z(v) \in x(v)$, so, clearly, $v \Vdash y(v) \in u(v)$. It is now a straightforward computation to see that $u$ witnesses the union axiom at $r$.
    
    Regarding $\in$-induction, suppose for contradiction that $r \not \Vdash \forall x (\forall y \in x \ \phi(y) \rightarrow \phi(x)) \rightarrow \forall x \phi(x)$. Then there is some $v \geq r$ such that $v \Vdash \forall x (\forall y \in x \ \phi(y) \rightarrow \phi(x))$ but $v \not \Vdash  \forall x \phi(x)$. As $M$ satisfies $\in$-induction, it must be that $v = r$, and so by persistence $M^+ \Vdash \forall x (\forall y \in x \phi(y) \rightarrow \phi(x))$. By $\in$-induction in $M$, $M \Vdash \forall x \phi(x)$, so all failures of this instance of $\in$-induction must happen at the new root $r$. Thus, as $r \not \Vdash \forall x \phi(x)$, it follows that there is some $x_0 \in D_r$ such that $r \not \Vdash \phi(x_0)$. Using the antecedent of $\in$-induction, this means that there must be some $x_1 \in D_r$ such that $r \Vdash x_1 \in x_0$ and $r \not \Vdash \phi(x_1)$. Iterating this construction, we obtain a sequence $\set{x_n}_{n \in \omega}$ such that $x_{n+1} \in x_{n}(r)$. This straightforwardly gives rise to an infinitely decreasing $\in$-chain. A contradiction.
    
    Next, we consider the separation axiom. Let $x \in D_r$ and $\phi(y)$ be a formula, possibly with parameters. Now, for every $v \in K$, let $s_v$ be the unique result of separating from $x(v)$ with $\phi$ and parameters $\bar p(v)$ at node $v$. With persistence and extensionality, it follows that the function $s$ with domain $K^+$ such that $s(v) = s_v$ and $s(r) = \Set{y \in x(r)}{r \Vdash \phi(y,\bar p)}$ is a well-defined element of $D_r$; if $\alpha$ is the least such that $x \in D_r^\alpha$, then it is easy to see that $x \in D_r^{\alpha + 1}$. It follows straightforwardly from the definition of $s$ that it witnesses separation from $x$ with $\phi$ at $r$: $r \Vdash z \in s$ is equivalent to $z \in s(r)$, and the latter holds by definition if and only if $r \Vdash \phi(z,\bar p)$. Note that the proof of $\Delta_0$-separation schema is a special case of the proof for the separation schema.
    
    For the power set axiom, consider $x \in D_r$ and let $\beta < \kappa$ such that $x \in D_r^\beta$. If $y \in D_r$ such that $r \Vdash y \subseteq x$, then $y(r) \subseteq x(r)$ and hence $y \in D_r^\beta$ as well. Let $p(r)$ consist of those $y \in D_r$ such that $r \Vdash \forall z (z \in y \rightarrow z \in x)$, then $p(r) \subseteq D_r^\beta$. Moreover, let $p(v)$ be the unique element of $D_v$ such that $v \Vdash p(v) = \mathcal{P}(x(v))$ (using extensionality). By persistence, $p$ is a well-defined element of  $D_r$. It is then straightforward to check that $p$ is the power set of $x$ at $r$. 
    
    For replacement, let $x \in D_r$ and $\phi$ be a formula (potentially with parameters) such that $r \Vdash \forall y \in x \exists ! z \phi(y,z)$. Given this, let $a(r)$ consist of those $z \in D_r$ for which there exists some $y \in D_r$ with $r \Vdash \phi(y,z)$. Moreover, let $a(v)$ be the witness for applying replacement with $\phi$ on $x(v)$ at $v$. By inaccessibility of $\kappa$ and persistence in $M^+$, it follows that $a \in D_r$. As in the previous cases, it is now straightforward to check that $a$ witnesses replacement.
    
    For exponentiation, let $a, b \in D_r$. Let $z(r)$ be the set of functions from $a$ to $b$ at $r$, and let $z(v)$ be the set of functions from $a(v)$ to $b(v)$ at the node $v$. It follows that $z \in D_r$, and it is easy to check that $z$ witnesses the exponentiation axiom.
    
    Recall that the axiom of strong infinity asserts the existence of a least inductive set. So, for every $v \in K$, let $\omega_v \in D_v$ be this least inductive set. At the new root $r$, we recursively construct sets $n_r$ as follows: Let $0_r$ be the empty set as defined from the empty set axiom. Then, given $n_r$, use pairing and union, to obtain $(n+1)_r$ such that $r \Vdash (n+1)_r = n_r \cup \set{n_r}$. As each $\omega_v$ is the least inductive set at $v$, it must be the case that $v \Vdash n_r(v) \in \omega_v$ for all $v \in K$. Therefore, the set $x$, defined by $x(r) = \Set{n_r}{n \in \omega}$ and $x(v) = \omega_v$ for $v \in K$, is a well-defined set at $r$, i.e. $x \in D_r$. By construction, we must have that if $r \Vdash \text{``$y$ is inductive''}$, then $r \Vdash x \subseteq y$. Hence $x$ witnesses strong infinity.
\end{proof}

A combination of \Cref{Theorem: Extensible axioms and schemes} and \Cref{Lemma: Extensible 1,Lemma: Extensible 2} yields the next corollary. An application of \Cref{Theorem: Extensibility implies admissible rules} then yields \Cref{Corollary: admissible rules of IZFR CZFER ECST BCST}.

\begin{corollary}
    \label{Theorem: IZFR CZFER ECST BCST extensible}
    The theories $\IZFR$, $\CZFER$, $\ECST$ and $\BCST$ are extensible. \qed
\end{corollary}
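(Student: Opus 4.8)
The plan is to read this off from \Cref{Theorem: Extensible axioms and schemes} and \Cref{Lemma: Extensible 1,Lemma: Extensible 2} by a bookkeeping argument: for each of the four theories $T$ one checks that every one of its axioms and schemes already occurs among the axioms proved extensible, and then glues these single-axiom statements together. The relevant lists are, for $\BCST$: extensionality, empty set, union, pairing, replacement, and $\Delta_0$-separation; for $\ECST$: those four structural axioms together with strong infinity, $\Delta_0$-separation, and replacement; for $\CZFER$: the $\ECST$-axioms augmented by $\in$-induction and exponentiation; and for $\IZFR$: extensionality, empty set, union, pairing, full separation, $\in$-induction, replacement, power set, and infinity. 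Every item appearing here — extensionality, empty set, pairing, union, $\in$-induction, full and $\Delta_0$-separation, power set, replacement, exponentiation, and strong infinity — is shown $\mathsf{Ext}$-extensible, respectively $\set{\mathsf{Ext},\mathsf{Emp},\mathsf{Pair},\mathsf{Un}}$-extensible, by \Cref{Theorem: Extensible axioms and schemes}.

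For the gluing step, set $\Gamma_0 := \set{\mathsf{Ext},\mathsf{Emp},\mathsf{Pair},\mathsf{Un}}$ and observe that $\Gamma_0 \subseteq T$ for each of the four theories. Each axiom $\phi$ of $T$ is $\Gamma_\phi$-extensible with $\Gamma_\phi \subseteq \Gamma_0$ — namely $\Gamma_\phi = \set{\mathsf{Ext}}$ for every axiom except strong infinity, and $\Gamma_\phi = \Gamma_0$ for strong infinity — so \Cref{Lemma: Extensible 2} promotes each $\phi$ to being $\Gamma_0$-extensible. Now fix any Kripke model $M \Vdash T$ (so in particular $M \Vdash \Gamma_0 \cup T$) and form its canonical extension $M^+$. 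Since $M^+$ is a single construction out of $M$, the extended model simultaneously witnesses the $\Gamma_0$-extensibility of each axiom, so $M^+ \Vdash \phi$ for every $\phi \in T$ and hence $M^+ \Vdash T$. This exhibits $T$ as $\Gamma_0$-extensible, and as $\Gamma_0 \subseteq T$, \Cref{Lemma: Extensible 1} upgrades this to plain extensibility of $T$.

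The one place that is not purely mechanical — and hence the main obstacle, mild as it is — is the axiom of infinity in $\IZFR$, which is itself absent from the list of \Cref{Theorem: Extensible axioms and schemes} (only strong infinity appears there). Here I would use that infinity and strong infinity are equivalent over full separation: from an inductive set, full separation produces the least inductive set as the intersection of all inductive subsets. Consequently $\IZFR$ proves strong infinity, so any $M \Vdash \IZFR$ also forces strong infinity; since strong infinity is $\Gamma_0$-extensible and $M \Vdash \Gamma_0$, we obtain $M^+ \Vdash \text{strong infinity}$ and therefore $M^+ \Vdash \text{infinity}$. Equivalently — and this is the tidiest route — one axiomatises $\IZFR$ with strong infinity in place of infinity, which is an inter-derivable and hence model-theoretically identical presentation (paralleling the remark already made for $\CZF$), and then applies the previous paragraph verbatim. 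For $\ECST$, $\CZFER$, and $\BCST$ no such adjustment is needed, as they are presented with strong infinity or with no infinity axiom at all; all that remains is the routine matching of axiom lists.
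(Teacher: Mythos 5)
Your proposal is correct and takes essentially the same route as the paper, which derives the corollary in exactly this way: by combining \Cref{Theorem: Extensible axioms and schemes} with \Cref{Lemma: Extensible 1,Lemma: Extensible 2} through a matching of axiom lists. If anything, you are more careful than the paper's one-line derivation, since you make explicit two points it leaves implicit: that the per-axiom extensibility statements can only be glued because they are all witnessed by the one canonical construction of $M^+$ (extensibility of single sentences is not abstractly closed under unions), and that $\IZFR$'s infinity axiom must be traded for strong infinity, which full separation makes an inter-derivable presentation.
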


\begin{corollary}
    \label{Corollary: admissible rules of IZFR CZFER ECST BCST}
    The propositional admissible rules of $\IZFR$, $\CZFER$, $\ECST$, and $\BCST$ are exactly those of $\IPC$. In other words, if $T$ is one of these theories, then ${\vsim_T} = {\vsim_\IPC}$. \qed
\end{corollary}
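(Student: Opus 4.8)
The plan is to read the corollary off from the machinery already assembled, namely \Cref{Theorem: Extensibility implies admissible rules}, whose conclusion ${\vsim_T} = {\vsim_\IPC}$ is exactly what is wanted. That theorem requires three hypotheses on $T$: that $T$ be extensible, subclassical, and recursively enumerable. Extensibility for each of $\IZFR$, $\CZFER$, $\ECST$, and $\BCST$ has just been established in \Cref{Theorem: IZFR CZFER ECST BCST extensible}, so the only work left is to verify the remaining two conditions for each of the four theories.

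For subclassicality, I would exhibit a single classical model common to all four theories. Each axiom of $\IZFR$, $\CZFER$, $\ECST$, and $\BCST$ is classically valid in the von Neumann universe $V$ (these theories are all contained in classical $\ZF$ or $\ZFC$); hence $V$, viewed as a one-point Kripke model, is a classical model of each of the four theories, so each is subclassical. For recursive enumerability, I would observe that every one of these theories is axiomatised by finitely many axioms together with finitely many axiom schemes --- separation (or $\Delta_0$-separation), replacement, and $\in$-induction --- each scheme being a decidable set of formulas; hence the axiom sets are recursive, a fortiori recursively enumerable.

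With all three hypotheses in place for each theory, \Cref{Theorem: Extensibility implies admissible rules} applies verbatim and yields ${\vsim_T} = {\vsim_\IPC}$ for $T$ ranging over $\IZFR$, $\CZFER$, $\ECST$, and $\BCST$. There is no genuine obstacle at this final step: the entire difficulty has been front-loaded into the extensibility verification carried out in \Cref{Theorem: Extensible axioms and schemes}, and into the chain \Cref{Corollary: Extensibility implies De Jongh}--\Cref{Lemma: T extensible implies Visser admissible}--\Cref{Theorem: Vissers rules admissibility} that underlies \Cref{Theorem: Extensibility implies admissible rules}. Were I to aim for a fully self-contained argument rather than a one-line appeal, I would unfold that chain for each $T$: use subclassicality together with recursive enumerability to obtain $\Logic(T) = \IPC$ via \Cref{Corollary: Extensibility implies De Jongh}, use extensibility to obtain admissibility of Visser's rules via \Cref{Lemma: T extensible implies Visser admissible}, and then close with \Cref{Theorem: Vissers rules admissibility}. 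Since each of these ingredients is already in hand, however, the corollary is immediate.
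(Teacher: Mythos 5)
Your proposal is correct and takes exactly the paper's route: the paper obtains this corollary by applying \Cref{Theorem: Extensibility implies admissible rules} to the extensibility of the four theories established in \Cref{Theorem: IZFR CZFER ECST BCST extensible}, just as you do. The only difference is that you make explicit the subclassicality and recursive-enumerability checks that the paper leaves implicit (one small caveat: the proper class $V$ is not literally a set-sized classical model, so take instead, e.g., $V_\kappa$ for $\kappa$ inaccessible, which exists under the paper's standing assumption of a proper class of inaccessibles).
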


\begin{remark}
    It seems difficult to use the method above to prove that the axiom schemes of collection, strong collection and subset collection are extensible. The reason for this is that these axiom schemes do not require that their witnesses are \emph{unique}---this is in contrast to their weakened versions, \emph{replacement} and \emph{exponentiation}. 
\end{remark}

\begin{question}
    Are the axiom schemes of collection, strong collection and subset collection extensible?
\end{question}

\begin{remark}
    \label{Remark: Alternatives for inaccessibles}
    It is not strictly necessary to assume the existence of a proper class of inaccessible cardinals as (at least) the following two alternatives are available: First, we could work with Kripke models that have (definable) class domains. Second, we could apply the downwards Löwenheim-Skolem Theorem (which does hold in our classical metatheory) to work, without loss of generality, only with models with countable domains. In our view, the solution with inaccessible cardinals is the most elegant as it allows us to ignore any worries about size restrictions.
\end{remark}

\section{Questions}
\label{Section: questions}

We close with a few suggestions for future research.

\begin{question}
    \label{Question: T T' vsimpr}
    Is it the case that ${\vsimpr_{\HA}} = {\vsimpr_{\IZF}}$? 
\end{question}

In general, one can ask the same question for any two constructive theories $T$, $T'$ of interest.

\begin{question}
    Are $\CZF$ and $\IZF$ extensible?
\end{question}

\begin{question}
    Of course, both $\IZF$ and $\CZF$ can be extended with further axioms. Natural examples of such further axioms include the so-called \textit{large set axioms}---the constructive counterparts to large cardinal axioms. What are the propositional and first-order logics of extensions of $\IZF$ and $\CZF$ with further axioms? Also, what are their admissible rules? 
\end{question}

\begin{question}
    In this paper we gave a first \textit{structural} analysis of the logical structure by observing the consequences of the extension property. Are there other structural properties of set theories that determine (parts of) the logical structure of a given set theory?
\end{question}

\begin{question}
    Let $T$ be a theory. Given a class $\Gamma$ of set-theoretic formulas, we can obtain the notions of $\Gamma$-propositional logic $\Logic(T)$ and  $\Gamma$-first-order logic $\QLogic(T)$, as well as $\Gamma$-admissible rules by restricting to those $T$-substitutions that arise from $T$-assignments with domain in $\Gamma$. Natural classes to consider are, of course, the classes of the Lévy hierarchy but other cases are thinkable (such as prenex formulas). What are the logics that arise in this way?
\end{question}

Note, regarding the previous question, that the substitutions used to prove the results in \Cref{Section: techniques and results} are usually of rather low complexity, e.g. $\Sigma_3$ or $\Sigma_4$. Therefore, the question is often only interesting for classes of lower complexity. For instance, Visser \cite[Section 3.7-3.9]{Visser1999} considers $\Sigma_1$-substitutions.

In this paper, we have only considered set theories on the basis of intuitionistic logic. Of course, one can also analyse the logical structure of set theories based on other logics. Löwe, Passmann and Tarafder \cite{LowePassmannTarafder2021} take some first steps towards an analysis of the logical structure of certain paraconsistent set theories. Another related problem is to determine the provability logics of constructive and intuitionistic set theories.

\section{The classical case}
\label{Section: classical case}

In this final section we briefly discuss the logical structure of classical theories. Recall that a logic is \textit{Post-complete} if it has no consistent extension.

\begin{theorem}
    \label{Theorem: classical theory propositional de jongh}
    Let $T$ be a classical theory. If $T$ is consistent, then $T$ satisfies the de Jongh property for $\CPC$.
\end{theorem}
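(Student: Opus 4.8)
The plan is to reduce the de Jongh property for $\CPC$ to the single identity $\Logic(T) = \CPC$. Since $T$ is a classical theory, it already contains $\CQC$ and hence all of $\CPC$, so closing $T$ under $\CPC$ adds nothing: $T(\CPC) = T$. It therefore suffices to establish $\Logic(T) = \CPC$, and by \Cref{Proposition: tautology characterisation} this amounts to showing that a propositional formula $A$ satisfies $T \vdash \sigma(A)$ for every substitution $\sigma$ if and only if $A \in \CPC$.

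For the inclusion $\CPC \subseteq \Logic(T)$, suppose $A \in \CPC$. Any substitution $\sigma$ replaces the propositional letters of $A$ by first-order sentences of $T$ and commutes with the connectives, so $\sigma(A)$ is a substitution instance of a classical propositional tautology and is thus itself a classical first-order tautology. As $T$ is based on classical logic, $T \vdash \sigma(A)$; hence $A \in \Logic(T)$.

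For the converse inclusion $\Logic(T) \subseteq \CPC$, I would argue by contraposition. Suppose $A \notin \CPC$, and fix a classical truth-value assignment $v$ to the propositional letters of $A$ under which $A$ evaluates to false. Define a ground substitution $\sigma$ by setting $\sigma(p) = \top$ when $v(p)$ is true and $\sigma(p) = \bot$ when $v(p)$ is false. Because $\sigma$ commutes with $\top$, $\bot$ and the connectives, $\sigma(A)$ is classically equivalent to the truth value of $A$ under $v$, namely $\bot$; that is, $\neg \sigma(A)$ is a classical tautology and so $T \vdash \neg \sigma(A)$. Here the consistency of $T$ enters: were $T \vdash \sigma(A)$, we would obtain $T \vdash \bot$, contradicting consistency. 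Hence $T \not\vdash \sigma(A)$, which witnesses $A \notin \Logic(T)$.

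The argument is short, and the only place where an essential hypothesis is used is this last step, where consistency of $T$ guarantees that the classically refutable sentence $\sigma(A)$ is genuinely not provable. The points to verify carefully are merely that ground substitutions are legitimate $T$-substitutions—which holds by definition, since their range is $\set{\top,\bot}$—and that the classical evaluation of $\sigma(A)$ tracks the Boolean valuation $v$ exactly; both are routine once the definitions are unwound, so I do not expect any serious obstacle.
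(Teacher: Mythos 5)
Your proof is correct, but it takes a genuinely different route from the paper's. The paper's argument is three lines of abstract reasoning: $\CPC \subseteq \Logic(T)$ since $T$ is classical, $\bot \notin \Logic(T)$ by consistency, and then $\Logic(T) = \CPC$ follows immediately because $\CPC$ is \emph{Post-complete}---this implicitly uses that $\Logic(T)$ is a consistent logic (closed under substitution and modus ponens) extending $\CPC$, so that Post-completeness applies. You instead inline what amounts to the standard proof of Post-completeness of $\CPC$: from a Boolean valuation $v$ falsifying $A$ you build a ground substitution $\sigma$ with $\sigma(p) \in \set{\top,\bot}$, observe that $\sigma(A)$ evaluates classically to $\bot$ so that $\neg\sigma(A)$ is a theorem of the classical theory $T$, and invoke consistency to conclude $T \not\vdash \sigma(A)$, witnessing $A \notin \Logic(T)$. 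What your version buys: it is self-contained (no appeal to Post-completeness, nor to the closure properties of $\Logic(T)$ needed to apply it), it makes explicit the reduction $T(\CPC) = T$ that the paper leaves tacit, and its ground-substitution device is exactly the technique the paper itself deploys later in the proof of \Cref{Theorem: Propositional admissible rules preserved}, so your argument foreshadows that result. What the paper's version buys: brevity, and a formulation that transparently isolates why the propositional case is easy (Post-completeness) and why the first-order case is harder ($\CQC$ is not Post-complete), which frames the discussion that follows the theorem.
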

\begin{proof}
    Since $T$ is classical, it must be that $\CPC \subseteq \Logic(T)$. As $T$ is consistent, $\bot \notin \Logic(T)$. Hence, $\Logic(T) = \CPC$ as $\CPC$ is Post-complete.  
\end{proof}

The case for first-order logics of a given theory is more complicated as they are not Post-complete; Yavorsky \cite{yavorsky1997logical} proved that expressively strong arithmetical theories $T$, such as $\mathsf{PA}$, satisfy $\QLogic(T) = \CQC$ (see Yavorsky's paper for definitions and details). Yavorsky also shows that there are classical theories $T$ such that $\CQC \subsetneq \QLogic(T)$.

\begin{theorem}
    \label{Theorem: Propositional admissible rules preserved}
    Let $T$ be a classical theory. Then $A \vsim_T B$ if and only if $A \vsim_{\Logic(T)} B$. Thus  ${\vsim_T}\ =\ \vsim_{\Logic(T)}\ =\ {\vdash_{\CPC}}$. 
\end{theorem}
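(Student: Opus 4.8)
The plan is to prove the biconditional $A \vsim_T B \iff A \vsim_{\Logic(T)} B$ and then identify this relation with $\vdash_{\CPC}$. For the forward direction, there is nothing to do: it is exactly \Cref{Theorem: Bound on admissible rules}, which gives ${\vsim_T} \subseteq {\vsim_{\Logic(T)}}$ for any theory $T$ whatsoever. So the entire content lies in the converse, ${\vsim_{\Logic(T)}} \subseteq {\vsim_T}$, together with the final identification. The key structural fact I would exploit is that, since $T$ is classical, its propositional logic is itself classical: by \Cref{Theorem: classical theory propositional de jongh} (or rather its proof), a consistent classical $T$ has $\Logic(T) = \CPC$, and an inconsistent $T$ proves everything so that $\vsim_T$ is trivial. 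Thus the real work is to show that admissibility over $\CPC$ coincides with derivability in $\CPC$.

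First I would record the crucial special feature of classical propositional logic: it admits no proper admissible rules, i.e. $A \vsim_{\CPC} B$ holds if and only if $A \vdash_{\CPC} B$. This is because $\CPC$ has the \emph{ground substitution} property — for every propositional formula $A$ and every assignment of truth values to its variables there is a ground substitution $\sigma$ (sending each letter to $\top$ or $\bot$) realising that valuation, and $\CPC \vdash \sigma(A)$ precisely when that valuation satisfies $A$. Hence if $A \not\vdash_{\CPC} B$, pick a valuation satisfying $A$ but not $B$; the corresponding ground substitution witnesses that the rule $A/B$ is \emph{not} admissible in $\CPC$. This establishes ${\vsim_{\CPC}} = {\vdash_{\CPC}}$.

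Next I would transfer this to $T$ itself by the same ground-substitution argument carried out inside $T$. Suppose $A \vsim_{\Logic(T)} B$; since $\Logic(T) = \CPC$ this means $A \vsim_{\CPC} B$, hence $A \vdash_{\CPC} B$ by the previous step. Now to see $A \vsim_T B$, take any $T$-substitution $\sigma$ with $T \vdash \sigma(A)$; because $\sigma$ commutes with the connectives and $T$ is classical, $T$ proves $\sigma$ of every classical consequence of $A$, so from $A \vdash_{\CPC} B$ we get $T \vdash \sigma(B)$. Conversely the trivial inclusion already gives $A \vsim_T B \Rightarrow A \vsim_{\CPC} B = {\vdash_{\CPC}}$. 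Chaining the three relations yields ${\vsim_T} = {\vsim_{\Logic(T)}} = {\vdash_{\CPC}}$, with the inconsistent case handled separately since then $T \vdash \sigma(B)$ for all $\sigma, B$ and every rule is admissible while also $\Logic(T)$ is inconsistent.

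The main obstacle is conceptual rather than computational: one must be careful that the notion of admissibility here is defined via arbitrary $T$-substitutions into set-theoretic formulas, not via propositional valuations, so the reduction to $\vdash_{\CPC}$ hinges entirely on the availability of ground substitutions realising the two truth values $\top$ and $\bot$ as genuine $T$-sentences. This requires only that $T$ prove some sentence and refute another (guaranteed by consistency together with classicality), after which the soundness of $T$-substitution with respect to classical propositional reasoning closes the argument. I expect the write-up to be short, with the verification that ${\vsim_{\CPC}} = {\vdash_{\CPC}}$ being the single load-bearing lemma.
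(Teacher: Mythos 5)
Your proposal is correct, but it decomposes the theorem differently from the paper. The paper proves the biconditional ${\vsim_T} = {\vsim_{\Logic(T)}}$ \emph{directly} and semantically: arguing by contraposition, from a $T$-substitution $\rho$ with $\vdash_T \rho(A)$ and $\nvdash_T \rho(B)$ it takes a classical model $M$ of $T$ (with valuation $v$) refuting $\rho(B)$, defines the ground propositional substitution $\sigma(p) = \top$ if $M,v \models \rho(p)$ and $\sigma(p) = \bot$ otherwise, and shows by induction that $\vdash_{\Logic(T)} \sigma(C)$ iff $M,v \models \rho(C)$, so that $\sigma$ witnesses $A \not\vsim_{\Logic(T)} B$; the identity ${\vsim_{\CPC}} = {\vdash_{\CPC}}$ is then simply cited from the literature to obtain the final clause. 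You instead make that cited fact the load-bearing lemma, prove it yourself (via the same ground-substitution trick, but applied to a purely propositional countermodel to $A \vdash_{\CPC} B$), and then recover ${\vsim_{\Logic(T)}} \subseteq {\vsim_T}$ \emph{syntactically}: from $A \vdash_{\CPC} B$ you get $\vdash_{\CPC} A \to B$, and since $T$-substitutions commute with the connectives and $T$ derives all substitution instances of classical tautologies, $T \vdash \sigma(A)$ yields $T \vdash \sigma(B)$. Both arguments pivot on ground substitutions realising a valuation by $\top/\bot$; the difference is where the valuation comes from. The paper's route needs the completeness theorem for classical predicate logic (to produce the model $M$ of $T$), whereas yours needs only propositional semantics together with the deduction theorem, making it slightly more elementary and more modular --- it cleanly isolates structural completeness of $\CPC$ and the trivial observation that derivable rules are admissible in any classical theory. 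Your explicit handling of the inconsistent case is a small bonus: the paper implicitly assumes consistency, since its appeal to \Cref{Theorem: classical theory propositional de jongh} requires it, and as you note the final identification with $\vdash_{\CPC}$ genuinely fails for inconsistent $T$ even though the biconditional survives trivially.
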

\begin{proof}
Theorem~\ref{Theorem: classical theory propositional de jongh} implies that $\Logic(T)=\CPC$. As is well-known, $\vsim_{\CPC}\ =\ \vdash_{\CPC}$ (a proof can be found in \cite{Iemhoff2016}). Thus the second part of the theorem follows from the first. 

For the first part, 
the direction from left to right has been proven in Theorem~\ref{Theorem: Bound on admissible rules}. 
We treat the other direction. 
Arguing by contraposition, assume for some propositional formulas $A$ and $B$ that $\vdash_T\rho(A)$ and $\nvdash_T \rho(B)$ for some substitution $\rho$ from the language of propositional logic to the language of $T$. We have to show that $A\not\vsim_{\Logic(T)}B$. Assumption $\nvdash\rho(B)$ implies that there exists a propositional model $M$ of $T$ and a valuation $v$ such that $M,v\not\models\rho(B)$. Define a substitution $\sigma$ in propositional logic as follows:
\[\sigma(p)=
\begin{cases}
\top &\text{if }M,v\models\rho(p),\\
\bot &\text{if }M,v\not\models\rho(p).
\end{cases}
\]
It is easy to see that for any propositional formula $C$, the formula $\sigma C$ contains no propositional variables, which implies that $\sigma C$ is equivalent to $\top$ or to $\bot$. 
Therefore, 
\begin{equation}\label{excl_mid_eq}
\vdash_{\Logic(T)}\sigma (C)\text{ or }\vdash_{\Logic(T)}\neg\sigma (C).
 \end{equation}
Using \eqref{excl_mid_eq} it is easy to prove by induction on the propositional formula $C$ that 
\begin{equation}\label{P2Qadm_eq}
\vdash_{\Logic(T)}\sigma (C)\iff M,v\models\rho(C).
\end{equation}
Now, since $M\not\models \rho(B)$, it follows that $\nvdash_{\Logic(T)}\sigma (B)$ from (\ref{P2Qadm_eq}). On the other hand, since $\vdash_T\rho (A)$, we must have that $M\models\rho (A)$, thus (\ref{P2Qadm_eq}) implies that $\vdash_{\Logic(T)}\sigma (A)$. Therefore
$A\not\vsim_{\Logic(T)} B$, which is what we had to show.
\end{proof}

In the remainder of this  section we show that the first part of Theorem~\ref{Theorem: Propositional admissible rules preserved} can be lifted to the predicate case, at least for theories for which $\QLogic(T)=\CQC$, and the second part cannot. 
The predicate version of the first part of the previous theorem states that $\vsimpr_T \ =\ \vsimpr_{\CQC}$ and the second part states that $\vsimpr_{\CQC} \ =\ \vdash_{\CQC}$. To show that the first statement is true for theories $T$ such that $\QLogic(T)=\CQC$, we first have to prove that the second statement, while not true, is almost true; we have to show that $\CQC$ is {\em almost structurally complete}. 

As remarked above, $\CPC$ is {\em structurally complete}, which means that any admissible rule is derivable: for all propositional formulas $A$ and $B$
\begin{equation}
A\vsim_{\CPC}B \Rightarrow A \vdash_{\CPC} B. 
\end{equation}
In other words, $\vsim_{\CPC}\ =\ \vdash_{\CPC}$ (see also Iemhoff \cite{Iemhoff2016}).

It is easy to see that, unlike $\CPC$, $\CQC$ is not structurally complete. Consider the formula $\E xP(x) \wedge \E x\neg P(x)$ and observe that for no substitution $\sigma$ the formula $\sigma (\E xP(x) \wedge \E x \neg P(x))$ is derivable in $\CQC$. For if it were, it would hold in models consisting of one element, quod non. This implies that $\E xP(x) \wedge \E x\neg P(x) \vsimpr_{\CQC} \bot$. And since $\E xP(x) \wedge \E x\neg P(x) \not \vdash_{\CQC} \bot$, this shows that $\CQC$ is not structurally complete, i.e.\ that $\vsimpr_{\CQC} \ \neq\ \vdash_{\CQC}$. However, the admissible rule $\E xP(x) \wedge \E x\neg P(x) \vsimpr_{\CQC} \bot$ has a particular property, it is {\em passive}. 

\begin{definition}
    Admissible rules $A \vsim_T B$ are called {\em passive} if the premise $A$ is not \textit{unifiable}, i.e.\ $\vdash_T \sigma (A)$ for no substitution $\sigma$. 
\end{definition}

We will show that the only non-derivable admissible rules in $\CQC$ are passive. In the terminology of admissibility: $\CQC$ is {\em almost structural complete}. This result was first obtained by Pogorzelski and Prucnal \cite{PogorzelskiPrucnal1975}. Here we prove a modest generalisation that applies to a class of consequence relations instead of a single one.
Recall that a rule $A/B$ is derivable in $\CQC$ if $A \vdash B$, where $\vdash$ is a consequence relation of $\CQC$, by which we mean a consequence relation that has the same theorems as $\CQC$: $\vdash A$ iff $A$ holds in $\CQC$. Thus, strictly speaking, the notion of derivable rule depends on the given consequence relation of $\CQC$, which is why we make it explicit in the discussion below. Since the derivability of a formula does not depend on the consequence relation (because all have the same derivable formulas), we can leave it implicit in such a setting, as in the next lemma, where $\vdash_{\CQC}$ denotes any consequence relation for $\CQC$.

\begin{lemma}
 \label{Lemma: Ground substitutions}
A formula $A$ is unifiable if and only if there exists a ground substitution $\tau$ such that $\vdash_{\CQC}\tau(A)$ and $\tau (P(\bar{s})) =\tau (P(\bar{t}))$ for all predicates $P$ and sequences of terms $\bar{s},\bar{t}$.
\end{lemma}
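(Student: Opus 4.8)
The plan is to prove the non-trivial left-to-right direction by exploiting the fact that, in classical predicate logic, unifiability of $A$ is equivalent to $A$ having a one-element model, and that one-element models correspond precisely to ground substitutions; the converse direction is immediate. First I would dispose of the right-to-left direction: if $\tau$ is a ground substitution with $\vdash_{\CQC}\tau(A)$, then $\tau$ is in particular a substitution witnessing that $A$ is unifiable, and the equality $\tau(P(\bar s)) = \tau(P(\bar t))$ holds automatically, since both sides equal the variable-free formula $\tau(P)\in\set{\top,\bot}$ (substituting terms into a formula with no free variables changes nothing).

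For the left-to-right direction, suppose $A$ is unifiable, so $\vdash_{\CQC}\sigma(A)$ for some substitution $\sigma$. By soundness $\sigma(A)$ holds in every structure; I would fix a one-element structure $M$ interpreting the symbols occurring in $\sigma(A)$, so that $M\models\sigma(A)$. By the standard semantic substitution lemma, $M\models\sigma(A)$ if and only if $M'\models A$, where $M'$ is the one-element structure for the language of $A$ interpreting each predicate $R$ by $R^{M'} = \Set{\bar a}{M\models\sigma(R)(\bar a)}$. Thus $A$ has a one-element model $M'$, with single element $a$. On this model each predicate $R$ of arity $n$ is simply true or false on the unique tuple $(a,\dots,a)$, so I would define a ground substitution $\tau$ by setting, for each predicate $R$, $\tau(R)=\top$ if $R^{M'}$ holds of that tuple and $\tau(R)=\bot$ otherwise.

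It then remains to show $\vdash_{\CQC}\tau(A)$, which follows by combining two routine inductions. An easy induction on subformulas shows that every $\tau(B)$ is a sentence all of whose atoms are $\top$ or $\bot$; consequently each quantifier in $\tau(B)$ binds a variable not free in its scope, so $\tau(B)$ is $\CQC$-provably equivalent to $\top$ or to $\bot$. A parallel induction shows that in the singleton model $M'$ the formula $A$ and its translation $\tau(A)$ receive the same truth value: the atomic case holds by the choice of $\tau$ (all terms denote $a$), the connective cases are immediate, and the quantifier cases are trivial because quantification over a one-element domain collapses. Hence $\tau(A)$ is true in $M'$, and being provably equivalent to a truth constant it must be equivalent to $\top$; by completeness $\vdash_{\CQC}\tau(A)$.

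The genuinely routine parts are the two inductions and the semantic substitution lemma. The only point requiring thought is the organising observation that unifiability in $\CQC$ collapses to the existence of a singleton model, together with the exact matching between singleton interpretations and ground substitutions; this is what licenses the explicit description of $\tau$. The main obstacle, such as it is, is purely bookkeeping: carrying out the quantifier-collapse step uniformly so that $\tau(A)$ is certified equivalent to $\top$ or $\bot$ rather than merely satisfiable.
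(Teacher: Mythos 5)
Your proof is correct, but it takes a genuinely different route from the paper's. The paper's argument is a purely syntactic two-liner: given a unifier $\sigma$ with $\vdash_{\CQC}\sigma(A)$, precompose with an arbitrary ground substitution $\rho$; closure of $\CQC$ under uniform substitution immediately gives $\vdash_{\CQC}\rho\sigma(A)$, and $\rho\sigma$ is the desired ground unifier (each $\rho\sigma(P)$ being a closed formula, the equality condition $\tau(P(\bar{s}))=\tau(P(\bar{t}))$ is automatic). You instead detour through semantics: you isolate the observation that, in this purely relational language, unifiability is equivalent to satisfiability in a one-element model, obtain such a model from soundness plus the semantic substitution lemma, read off a ground substitution from the singleton interpretation, and return via the quantifier-collapse induction and completeness. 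This costs you soundness, completeness, the substitution lemma and two inductions where the paper needs only closure under uniform substitution --- a point worth noting given that the surrounding text is careful about varying consequence relations for $\CQC$, though it is harmless here because the lemma concerns theoremhood only, which all these consequence relations share. What your route buys in exchange: it makes explicit the semantic content of the lemma, namely the singleton-model criterion for unifiability, which the paper itself uses informally just before the lemma to show that $\exists x P(x)\wedge\exists x\neg P(x)$ is not unifiable; it gives decidability of unifiability as a byproduct; and your $\tau$ is a ground substitution in the literal sense of the paper's definition, whereas the paper's composite $\rho\sigma$ sends atoms to quantified combinations of $\top$ and $\bot$ and so is a ground substitution only after replacing each value by its provable truth value --- a small gap your construction avoids.
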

\begin{proof}
It suffices to show the direction from left to right. Since $A(\bar{x})$ is unifiable, there exists a substitution $\sigma$ such that $\vdash_{\CQC}\sigma(A)$, from which it follows that $\vdash_{\CQC}\rho\sigma(A)$ for any ground substitution $\rho$, as $\CQC$ is closed under uniform substitution. 
Clearly, $\rho\sigma$ is a ground substitution too. That $\rho\sigma$ satisfies the second part of the lemma follows immediately from the definition of substitutions. 
\end{proof}

\begin{theorem}
    \label{Theorem: CQC almost SC}
    Let $\vdash_{\CQC}$ be a consequence relation for $\CQC$ such that  $C(\bar{x})\vdash_{\CQC}\forall \bar{x}C(\bar{x})$ for all formulas $C$. Then $A\vsimpr_{\CQC} B$ implies $A\vdash_{\CQC} B$ for all unifiable formulas $A$ and all formulas $B$.
\end{theorem}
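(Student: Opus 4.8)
The plan is to prove \emph{almost structural completeness} by exhibiting, for each unifiable $A$, a single substitution that simultaneously unifies $A$ and acts like the identity ``modulo $A$''. Fix unifiable formulas $A$ and $B$ with $A \vsimpr_{\CQC} B$. First I would invoke \Cref{Lemma: Ground substitutions} to obtain a ground substitution $\tau$ with $\vdash_{\CQC} \tau(A)$, so that each $\tau(P)$ is literally $\top$ or $\bot$ and in particular closed. Let $A'$ denote the universal closure $\forall \bar x A$ of $A$; the hypothesis $C(\bar x) \vdash_{\CQC} \forall \bar x C(\bar x)$ gives $A \vdash_{\CQC} A'$, while universal instantiation gives $A' \vdash_{\CQC} A$, so $A$ and $A'$ are interderivable. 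Using the \emph{closed} formula $A'$ (this is important to avoid variable capture), define the substitution $\theta$ on atoms by
\[
  \theta\bigl(P(\bar x)\bigr) \;:=\; \bigl(A' \wedge P(\bar x)\bigr) \vee \bigl(\neg A' \wedge \tau(P)\bigr),
\]
extended so as to commute with the connectives and quantifiers. Since $A'$ and $\tau(P)$ are closed, $\theta(P)$ has the same free variables as $P(\bar x)$, so this is a legitimate substitution.

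The technical heart is the \emph{transfer claim}: $A' \vdash_{\CQC} \theta(C) \leftrightarrow C$ for every formula $C$. I would prove this by induction on $C$. For atoms the implication $A' \rightarrow (\theta(P) \leftrightarrow P)$ is a classical validity, hence a theorem, and modus ponens inside the consequence relation yields the base case; the connective cases follow from the congruence of $\leftrightarrow$. The quantifier case is where the hypothesis on $\vdash_{\CQC}$ does the work: from the inductive hypothesis $A' \vdash_{\CQC} \theta(C) \leftrightarrow C$ I would generalise to $A' \vdash_{\CQC} \forall y(\theta(C) \leftrightarrow C)$ --- legitimate because $D \vdash_{\CQC} \forall y D$ follows from $C(\bar x) \vdash_{\CQC} \forall \bar x C(\bar x)$ together with cut --- and then apply the theorem $\forall y(\phi \leftrightarrow \psi) \rightarrow (\forall y \phi \leftrightarrow \forall y \psi)$.

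Next I would establish $\vdash_{\CQC} \theta(A)$. Rather than arguing inside the (possibly deduction-theorem-free) consequence relation, I would argue semantically and invoke completeness of $\CQC$: in any model validating $A'$, every $\theta$-image of an atom is equivalent to the atom itself, so $\theta(A)$ agrees with $A$, which holds since $A' = \forall \bar x A$; in any model falsifying $A'$, every $\theta$-image collapses to the closed formula $\tau(P)$, so $\theta(A)$ agrees with $\tau(A)$, which is valid. Hence $\theta(A)$ is valid and therefore a theorem, giving $\vdash_{\CQC} \theta(A)$.

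Finally I would assemble the pieces. Since $A \vsimpr_{\CQC} B$ and $\vdash_{\CQC} \theta(A)$, admissibility yields $\vdash_{\CQC} \theta(B)$. The transfer claim with $C := B$ gives $A' \vdash_{\CQC} \theta(B) \leftrightarrow B$, and combined with $\vdash_{\CQC} \theta(B)$ (weakened to a consequence of $A'$) this produces $A' \vdash_{\CQC} B$; cutting against $A \vdash_{\CQC} A'$ delivers $A \vdash_{\CQC} B$, as required. The main obstacle I anticipate is the bookkeeping around quantifiers: ensuring $\theta$ introduces no capture (handled by closing $A$ to $A'$) and justifying generalisation over hypotheses in an arbitrary consequence relation (handled by the stated property $C(\bar x) \vdash_{\CQC} \forall \bar x C(\bar x)$), together with the realisation that $\vdash_{\CQC}\theta(A)$ is cleanest to obtain through validity and completeness rather than through a deduction theorem that such a relation need not satisfy.
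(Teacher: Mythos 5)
Your proof is correct and is at heart the same Prucnal-style argument as the paper's, but the packaging differs in two ways worth comparing. First, the substitution: the paper defines $\sigma(P\bar{z})$ by cases as $\forall\bar{x}A \to P(\bar{z})$ when $\vdash\tau P(\bar z)\leftrightarrow\top$ and as $\forall\bar{x}A \wedge P(\bar{z})$ when $\vdash\tau P(\bar z)\leftrightarrow\bot$; your uniform conditional $(A'\wedge P(\bar{x}))\vee(\neg A'\wedge\tau(P))$ is classically equivalent to the corresponding case in each instance, so the two substitutions coincide up to provable equivalence and this difference is cosmetic. Second, and more substantively, the inductive claim: the paper proves, entirely at the level of \emph{theoremhood}, that $\sigma(B)$ is provably equivalent to $\forall\bar{x}A\to B$ or to $\forall\bar{x}A\wedge B$ according to the ground value of $\tau B$, and both $\vdash\sigma(A)$ and the relativised equivalence $\forall\bar{x}A\vdash\sigma(B)\leftrightarrow B$ then drop out of that single syntactic claim; you instead prove the relative transfer $A'\vdash\theta(C)\leftrightarrow C$ by induction \emph{inside} the consequence relation and obtain $\vdash\theta(A)$ by a separate semantic detour through completeness. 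Your route works, but observe the trade-off: a consequence relation for $\CQC$ is only assumed to have the same theorems as $\CQC$ together with $C(\bar{x})\vdash_{\CQC}\forall\bar{x}C(\bar{x})$, so the closure properties your induction uses at every step --- detaching $A'\vdash D$ from the theorem $\vdash A'\to D$, modus ponens and congruence of $\leftrightarrow$ under hypotheses --- are additional demands on $\vdash_{\CQC}$. The paper deliberately confines all reasoning under hypotheses to its final two lines (where, admittedly, one such detachment is also used implicitly), and your own model-splitting argument shows how to match this: it already yields the validity, hence theoremhood, of $A'\to(\theta(C)\leftrightarrow C)$ for every $C$, so you could run the transfer claim at the theorem level too and invoke the consequence relation only once at the end, via $A\vdash_{\CQC}\forall\bar{x}A$ and a single detachment, exactly parallel to the paper.
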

\begin{proof}
Since $A$ is unifiable there exists, by Lemma~\ref{Lemma: Ground substitutions}, a ground substitution $\tau$ such that $\vdash\tau(A)$ and $\tau (P(\bar{s})) =\tau (P(\bar{t}))$ for all predicates $P$ and sequences of terms $\bar{s},\bar{t}$. First note that this implies that $\tau (B(\bar{s})) =\tau (B(\bar{t}))$ for any formula $B$. Let $\bar{x}$ be the free variables in $A$ and define a substitution $\sigma$ as follows
\[
\sigma(P\bar{z})=
\begin{cases}
\forall \bar{x}A\to P(\bar{z})    & \text{ if }\vdash\tau P(\bar{z})\leftrightarrow \top,\\
\forall \bar{x}A\wedge P(\bar{z}) & \text{ if }\vdash\tau P(\bar{z})\leftrightarrow \bot.
\end{cases}
\]

\vspace{6pt}

\begin{claim*}
    For any formula $B$, we have that
    \[
    \vdash\sigma(B)\leftrightarrow
    \begin{cases}
    \forall \bar{x}A\to B    & \text{ if }\vdash\tau  B\leftrightarrow \top,\\
    \forall \bar{x}A\wedge B & \text{ if }\vdash\tau  B\leftrightarrow \bot.
    \end{cases}
    \]
\end{claim*}
\begin{proof}[Proof of the Claim]
\renewcommand{\qedsymbol}{$\dashv$}
By induction on $B$. 

\begin{description}
    \item[Case $B=P\bar{z}$.] Trivial.

    \item[Case $B= C\wedge D$.] If $\vdash\tau( C\wedge D)\leftrightarrow\top$, then as $\tau$ commutes with connectives, we must have that $\vdash\tau C\leftrightarrow\top$ and $\vdash\tau D\leftrightarrow\top$, which by the induction hypothesis imply that 
    \[\vdash\sigma C\leftrightarrow(\forall \bar{x}A\to C)\quad\text{ and }\quad\vdash\sigma D\leftrightarrow(\forall \bar{x}A\to D).\]
    Thus $\sigma( C\wedge D)\dashv\vdash(\forall \bar{x}A\to C)\wedge (\forall \bar{x}A\to D)\dashv\vdash\forall \bar{x}A\to( C\wedge D)$.

    If $\vdash\tau( C\wedge D)\leftrightarrow\bot$, then there are three cases.

\begin{description}
    \item[Case 1.] $\vdash\tau C\leftrightarrow\bot$ and $\vdash\tau D\leftrightarrow\bot$. By induction hypothesis, $\vdash\sigma C\leftrightarrow(\forall \bar{x}A\wedge C)$ and $\vdash\sigma D\leftrightarrow(\forall \bar{x}A\wedge D)$. Thus, $\vdash\sigma( C\wedge D)\leftrightarrow (\forall \bar{x}A\wedge( C\wedge D))$.

    \item[Case 2.] $\vdash\tau C\leftrightarrow\bot$ and $\vdash\tau D\leftrightarrow\top$. By induction hypothesis, $\vdash\sigma C\leftrightarrow(\forall \bar{x}A\wedge C)$ and $\vdash\sigma D\leftrightarrow(\forall \bar{x}A\to D)$. Thus, we have that
\[\sigma C\wedge\sigma D\dashv\vdash(\forall \bar{x}A\wedge C)\wedge(\forall \bar{x}A\to D)\dashv\vdash\forall \bar{x}A\wedge( C\wedge D).\]

    \item[Case 3.] $\vdash\tau C\leftrightarrow\top$ and $\vdash\tau D\leftrightarrow\bot$. Analogous to Case 2.
\end{description}

    \item[Case $B=\neg C$.] If $\vdash\tau(\neg C)\leftrightarrow\top$, then since $\tau$ commutes with the connectives we must have that $\vdash\tau C\leftrightarrow\bot$, which by induction hypothesis implies that $\sigma C\dashv\vdash\forall \bar{x}A\wedge C$. Thus, \[\sigma(\neg  C)\dashv\vdash\neg(\forall \bar{x}A\wedge C)\dashv\vdash \forall\bar{x}A\to\neg C.\]

    If $\vdash\tau(\neg C)\leftrightarrow\bot$, then $\vdash\tau C\leftrightarrow\top$ since $\tau$ commutes with the connectives, which by induction hypothesis implies that $\sigma C\dashv\vdash\forall \bar{x}A\to C$. Thus, \[\sigma(\neg  C)\dashv\vdash\neg(\forall \bar{x}A\to C)\dashv\vdash \forall\bar{x}A\wedge\neg C.\]
    
    \item[Case $B=\forall \bar{w} C(\bar{w})$.] If $\vdash\tau(\forall \bar{w} C(\bar{w}))\leftrightarrow\top$, then $\vdash \forall \bar{w}\tau ( C(\bar{w}))\leftrightarrow\top$ follows. 
    Hence $\vdash\tau C(\bar{w})\leftrightarrow\top$, which by induction hypothesis implies that $\sigma C(\bar{w})\dashv\vdash\forall \bar{x}A\to C(\bar{w})$. Thus, \[\sigma(\forall \bar{w} C(\bar{w}))\dashv\vdash \forall \bar{w}\sigma C(\bar{w})\dashv\vdash\forall\bar{w}(\forall \bar{x}A(\bar{x})\to C(\bar{w}))\dashv\vdash \forall\bar{x}A(\bar{x})\to\forall\bar{w} C(\bar{w}).\]

    If $\vdash\tau(\forall \bar{w} C(\bar{w}))\leftrightarrow\bot$, then the fact that $\tau$ is a substitution implies that $\vdash \tau(\forall \bar{w} C(\bar{w})) \leftrightarrow \forall \bar{w}\tau(C(\bar{w}))$, and that it is a ground substitution that $\vdash \tau C(\bar{w})\leftrightarrow\bot$ for some $\bar{w}$. By the observation made in the first paragraph of the proof, $\tau  C(\bar w)$ is equivalent to $\tau C(\bar s)$ for any sequence of terms $\bar s$. 
Thus 
    $\sigma C(\bar{w}) \dashv\vdash\forall \bar{x}A\wedge C(\bar{w})$ by the induction hypothesis, which implies 
    \[\sigma(\forall \bar{w} C(\bar{w}))\dashv\vdash\forall\bar{w}(\forall \bar{x}A(\bar{x})\wedge C(\bar{w}))\dashv\vdash \forall\bar{x}A(\bar{x})\wedge\forall\bar{w} C(\bar{w}).\]
\end{description}
This proves the claim.
\end{proof}

Now, since $\vdash\tau A\leftrightarrow\top$, we know by the claim that $\vdash\sigma A(\bar{x})\leftrightarrow(\forall\bar{x}A(\bar{x})\to A)$. But $\vdash\forall\bar{x}A(\bar{x})\to A$, thus $\vdash\sigma A$. Since $A\vsimpr_{\CQC} B$, we have that $\vdash\sigma B$.

By the claim, it is easy to see that $\forall \bar{x}A(\bar{x})\vdash\sigma B\leftrightarrow B$. By assumption $A\vdash_{\CQC}\forall \bar{x}A(\bar{x})$, we obtain that $A\vdash B$.
\end{proof}

\begin{corollary}
$\CQC$ is almost structurally complete with respect to any consequence relation that satisfies the constraint in Theorem~\ref{Theorem: CQC almost SC}.
\end{corollary}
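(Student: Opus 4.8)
The plan is to unwind the definition of almost structural completeness and observe that the corollary is essentially a reformulation of Theorem~\ref{Theorem: CQC almost SC}. Recall that a consequence relation is \emph{almost structurally complete} precisely when every admissible rule is either derivable or passive; equivalently, every admissible rule whose premise is unifiable is already derivable. Since passivity of a rule $A \vsimpr_{\CQC} B$ means exactly that $A$ fails to be unifiable, these two formulations coincide, and matching the conclusion of the theorem to this definitional content is all that is required.

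First I would fix an arbitrary consequence relation $\vdash_{\CQC}$ for $\CQC$ satisfying the constraint $C(\bar{x}) \vdash_{\CQC} \forall \bar{x} C(\bar{x})$ for all formulas $C$. Then I would take any admissible rule $A \vsimpr_{\CQC} B$ and split into two cases according to whether $A$ is unifiable. If $A$ is not unifiable, then the rule is passive by definition, and there is nothing further to show. If $A$ is unifiable, then Theorem~\ref{Theorem: CQC almost SC} applies directly and yields $A \vdash_{\CQC} B$, so the rule is derivable.

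Combining the two cases, every admissible rule over $\vdash_{\CQC}$ is either passive or derivable, which is exactly almost structural completeness for this consequence relation. Since the consequence relation was chosen arbitrarily among those satisfying the constraint, the corollary follows in full generality. I do not expect any genuine obstacle here: all the substantive work has already been carried out in Theorem~\ref{Theorem: CQC almost SC} via the substitution $\sigma$ and the accompanying claim, so the only care needed is to ensure that ``passive'' and ``premise not unifiable'' are treated as interchangeable, as the definition of passivity guarantees.
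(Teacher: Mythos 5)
Your proposal is correct and matches the paper's treatment: the corollary is stated there without proof precisely because, as you observe, it is a direct unwinding of the definition of almost structural completeness (every admissible rule with unifiable premise is derivable, the non-unifiable case being passive by definition) combined with Theorem~\ref{Theorem: CQC almost SC}. Your case split is exactly the intended argument, so nothing further is needed.
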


\begin{corollary}
 \label{Corollary: Admissibility classical theories}
    For any classical theory $T$ such that $\QLogic(T)=\CQC$: $A\vsimpr_TB$ if and only if $A\vsimpr_{\CQC}B$, i.e.\ $\vsimpr_T\ =\ \vsimpr_{\CQC}$.
\end{corollary}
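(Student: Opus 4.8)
The plan is to prove the two inclusions of $\vsimpr_T = \vsimpr_\CQC$ separately. The inclusion $\vsimpr_T \subseteq \vsimpr_\CQC$ is immediate: by \Cref{Theorem: Bound on admissible rules} we have $\vsimpr_T \subseteq \vsimpr_{\QLogic(T)}$, and $\QLogic(T) = \CQC$ by hypothesis. So all the content lies in the converse $\vsimpr_\CQC \subseteq \vsimpr_T$, and the idea is to obtain it from the almost structural completeness of $\CQC$ (\Cref{Theorem: CQC almost SC}).

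The key observation I would make is that $\vsimpr_T$ is \emph{itself} a consequence relation for $\CQC$ to which \Cref{Theorem: CQC almost SC} applies, so that the theorem can be instantiated with $\vdash_\CQC := \vsimpr_T$. Indeed, $\vsimpr_T$ is reflexive and transitive, and it is structural because the composite of a logical substitution with a $T$-substitution is again a $T$-substitution. Its theorems are exactly $\CQC$: a formula $C$ satisfies $\top \vsimpr_T C$ iff $C$ is a tautology of $T$, i.e. $C \in \QLogic(T) = \CQC$. Finally it satisfies the generalisation constraint $C(\bar x) \vsimpr_T \forall \bar x\, C(\bar x)$, since any classical theory is closed under generalisation of free variables: if $T \vdash \sigma(C)$ then $T \vdash \forall \bar x\, \sigma(C) = \sigma(\forall \bar x\, C)$, using that $\sigma$ introduces no new free variables. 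Granting this, take any rule $A \vsimpr_\CQC B$ and split on unifiability. If $A$ is unifiable, then \Cref{Theorem: CQC almost SC}, applied to the consequence relation $\vsimpr_T$, yields $A \vsimpr_T B$ directly, which is exactly what we want.

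The remaining case is that $A$ is not unifiable over $\CQC$, so that the rule $A \vsimpr_\CQC B$ is passive and holds vacuously; I must still establish $A \vsimpr_T B$. This reduces to showing that the premise is also non-unifiable over $T$: if no $T$-substitution $\sigma$ satisfies $T \vdash \sigma(A)$, then $A \vsimpr_T B$ holds trivially (and note $T$ is consistent, since $\QLogic(T) = \CQC$). Thus the whole case amounts to a \emph{transfer of non-unifiability}: if $A$ is not unifiable over $\CQC$, then $A$ is not unifiable over $T$. The reverse implication, from $\CQC$-unifiability to $T$-unifiability, is the easy one (precompose a $\CQC$-unifier with the fact that $\CQC \subseteq \QLogic(T)$), so the burden is entirely on this converse.

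I expect this transfer to be the main obstacle, and it is precisely the step where $\QLogic(T) = \CQC$ is indispensable: it fails outright for theories with $\CQC \subsetneq \QLogic(T)$, where a formula with no one-element model can nonetheless be made a theorem by a cleverly chosen $T$-substitution. The approach I would take is by contraposition: assuming a $T$-substitution $\sigma_0$ with $T \vdash \sigma_0(A)$, I would aim to produce a unifier of $A$ over $\CQC$ — equivalently, by \Cref{Lemma: Ground substitutions}, a ground substitution making $A$ a theorem, i.e. a one-element model of $A$. The delicate point is that a $T$-model of $\sigma_0(A)$ only yields a (possibly large) model of $A$, not a one-element one, so passing to a ground unifier is not automatic; it is here that one must exploit that the $T$-definable interpretations realise, up to the logic, exactly the $\CQC$-countermodels, which is the precise content of $\QLogic(T) = \CQC$. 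In contrast, the unifiable case above is essentially a formal consequence of almost structural completeness, so I anticipate that this reduction from $T$-unifiability back to $\CQC$-unifiability is where the real work of the proof resides.
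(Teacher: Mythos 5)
Your decomposition agrees with the paper's: the inclusion ${\vsimpr_T} \subseteq {\vsimpr_\CQC}$ via \Cref{Theorem: Bound on admissible rules}, then a case split on whether the premise $A$ is unifiable. In the unifiable case you take a slightly different but legitimate route: where the paper first obtains $A \vdash_\CQC B$ from \Cref{Theorem: CQC almost SC} and then passes to $T$ (derivable rules being admissible, since $T$ contains classical logic), you instantiate \Cref{Theorem: CQC almost SC} directly with $\vsimpr_T$ as the consequence relation. Your verifications are the right ones --- $\vsimpr_T$ has exactly the $\CQC$-theorems because $\QLogic(T) = \CQC$, it is structural, and it satisfies $C(\bar x) \vsimpr_T \forall \bar x\, C(\bar x)$ by closure of $T$-theoremhood under generalisation --- though you should also record that $\vsimpr_T$ extends ordinary $\CQC$-derivability (every derivable rule is $T$-admissible), since the final steps of the proof of \Cref{Theorem: CQC almost SC} chain $\CQC$-theorems through the consequence relation. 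Modulo this repackaging, the two arguments buy the same thing.

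The genuine gap is the passive case: you correctly reduce it to the transfer of non-unifiability from $\CQC$ to $T$, but you never prove that transfer --- you only announce a strategy (``exploit that the $T$-definable interpretations realise exactly the $\CQC$-countermodels'') without carrying it out, so the case you yourself identify as containing ``the real work'' is missing. The paper dispatches it in three lines: supposing $\vdash_T \sigma A$, compose with an arbitrary ground substitution $\tau$ to get $\vdash_T \tau\sigma A$; since $\tau\sigma A$ contains no atoms besides $\top$ and $\bot$, it is fixed by every substitution, so $\vdash_T \tau\sigma A$ already places it in $\QLogic(T) = \CQC$, i.e.\ $\tau\sigma$ is a ground $\CQC$-unifier of $A$ in the sense of \Cref{Lemma: Ground substitutions} --- contradicting non-unifiability in $\CQC$. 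Note that no correspondence between $T$-models and one-element countermodels is invoked; the hypothesis $\QLogic(T) = \CQC$ enters only through the substitution-invariance of ground formulas. That said, your instinct that this step is delicate has substance: the paper's claim that $\vdash_T \sigma A$ yields $\vdash_T \tau\sigma A$ amounts to closure of $T$'s theorems under ground substitution of the non-logical vocabulary, which --- unlike the closure of $\CQC$ under uniform substitution used in \Cref{Lemma: Ground substitutions} --- is not valid for an arbitrary theory: a ground substitution evaluates each predicate uniformly, as in a one-element structure, yet a theory such as $\mathsf{PA}$ proves $\exists x\, \phi(x) \wedge \exists x\, \neg\phi(x)$ for suitable $\phi$, so $\vdash_T \tau\sigma A$ can fail even though $\vdash_T \sigma A$ holds. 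So the point at which your proposal stalls is exactly the point at which the paper's own proof is briskest; to complete your argument you must either justify this closure property for the theories under consideration or supply a genuinely different proof of the transfer, and without one of these the passive case, and hence the corollary, remains unproven.
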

\begin{proof}
 The assumption $\QLogic(T)=\CQC$ gives $A\vsimpr_T B$ implies $A \vsimpr_{\CQC}B$ by Theorem~\ref{Theorem: Bound on admissible rules}. For the other direction, assume that $A \vsimpr_{\CQC}B$. If $A$ is unifiable in $\CQC$, then $A\vdash_{\CQC}B$ by Theorem~\ref{Theorem: CQC almost SC}. Hence $A \vdash_TB$ and thus $A \vsimpr_TB$. If $A$ is not unifiable in $\CQC$, then it suffices to show that $A$ is not unifiable in $T$. For this implies that $A\vsimpr_TC$ for any $C$, and thus $A\vsimpr_TB$ would follow. Arguing by contradiction, suppose $A$ is unifiable in $T$, say $\vdash_T\sigma A$ for some substitution $\sigma$. Then for any ground substitution $\tau$, $\vdash_T\tau\sigma A$, and as $\QLogic(T)=\CQC$, this implies $\vdash_{\CQC}\tau\sigma A$, contradicting the assumption that $A$ is not unifiable in $\CQC$. 
\end{proof}

\funding{The first author gratefully acknowledges support by the Netherlands Organisation for Scientific Research under grant 639.073.807 and by the MOSAIC project (EU H2020-MSCA-RISE-2020 Project 101007627).
The research of the second author was partially funded by a doctoral scholarship of the \textit{Studienstiftung des deutschen Volkes} (German Academic Scholarship Foundation).}

\ack{We would like to thank two anonymous reviewers for their very helpful comments.}

\bibliographystyle{rsta}
\bibliography{bibliography}

\end{document}